\documentclass[a4paper]{article}  
\usepackage[margin=1in]{geometry}

\usepackage{amsmath}
\usepackage{amsfonts}
\usepackage{amsthm}
\usepackage{amssymb}
\usepackage{mathrsfs}

\usepackage{graphicx}
\usepackage{subfigure}
\usepackage{comment}

\usepackage{tikz}
\usetikzlibrary{arrows.meta,calc,decorations.pathreplacing}

\usepackage{listings}

\usepackage{hyperref}
\hypersetup{
    colorlinks = true,
    bookmarksopen = true,
    bookmarksnumbered = true,
    linkcolor = blue,
    anchorcolor = blue,
    citecolor = blue,
}
\usepackage{aliascnt}
\usepackage[capitalise, nameinlink]{cleveref}

\newtheorem{theorem}{Theorem}[section]

\newaliascnt{definition}{theorem}
\newtheorem{definition}[definition]{Definition}
\aliascntresetthe{definition}

\newaliascnt{assumption}{theorem}

\aliascntresetthe{assumption}

\newaliascnt{proposition}{theorem}

\aliascntresetthe{proposition}

\newaliascnt{lemma}{theorem}
\newtheorem{lemma}[lemma]{Lemma}
\aliascntresetthe{lemma}

\newaliascnt{corollary}{theorem}
\newtheorem{corollary}[corollary]{Corollary}
\aliascntresetthe{corollary}

\newaliascnt{remark}{theorem}
\newtheorem{remark}[remark]{Remark}
\aliascntresetthe{remark}

\crefname{theorem}{Theorem}{Theorems}
\Crefname{theorem}{Theorem}{Theorems}
\crefname{definition}{Definition}{Definitions}
\Crefname{definition}{Definition}{Definitions}
\crefname{assumption}{Assumption}{Assumptions}
\Crefname{assumption}{Assumption}{Assumptions}
\crefname{proposition}{Proposition}{Propositions}
\Crefname{proposition}{Proposition}{Propositions}
\crefname{lemma}{Lemma}{Lemmas}
\Crefname{lemma}{Lemma}{Lemmas}
\crefname{corollary}{Corollary}{Corollaries}
\Crefname{corollary}{Corollary}{Corollaries}
\crefname{remark}{Remark}{Remarks}
\Crefname{remark}{Remark}{Remarks}

\newcommand{\mi}{\imath}

\newcommand{\vphi}{\varphi}

\newcommand{\tphi}{\tilde{\phi}}

\newcommand{\bi}{\boldsymbol{i}}

\newcommand{\bk}{\boldsymbol{k}}
\newcommand{\bn}{\boldsymbol{n}}
\newcommand{\bx}{\boldsymbol{x}}
\newcommand{\by}{\boldsymbol{y}}
\newcommand{\bv}{\boldsymbol{v}}
\newcommand{\be}{\boldsymbol{e}}
\newcommand{\bt}{\boldsymbol{t}}
\newcommand{\balpha}{\boldsymbol{\alpha}}
\newcommand{\bbeta}{\boldsymbol{\beta}}
\newcommand{\bgamma}{\boldsymbol{\gamma}}
\newcommand{\bxi}{\boldsymbol{\xi}}
\newcommand{\bdelta}{\boldsymbol{\delta}}
\newcommand{\bzero}{\boldsymbol{0}}

\newcommand{\re}{\mathrm{e}}

\newcommand{\rd}{\mathrm{d}}

\newcommand{\bbR}{\mathbb{R}}
\newcommand{\bbZ}{\mathbb{Z}}
\newcommand{\bbN}{\mathbb{N}}
\newcommand{\bbC}{\mathbb{C}}
\newcommand{\bbS}{\mathbb{S}}

\newcommand{\calX}{\mathcal{X}}

\newcommand{\calG}{\mathcal{G}}
\newcommand{\calC}{\mathcal{C}}

\newcommand{\hf}{\widehat{f}}

\newcommand{\rep}{\mathrm{Re}}  

\newcommand{\ccinf}{C_c^{\infty}}  
\newcommand{\supp}{\operatorname{supp}}

\newcommand{\onenorm}[1]{| #1 |_1} 
\newcommand{\infnorm}[1]{| #1 |_\infty} 

\newcommand{\aco}[2]{\mathcal{A}_{#1}^{#2}}  
\newcommand{\fto}[1]{\mathcal{F}\left[#1\right]}  

\newcommand{\Vpara}{V_{\text{para}}}  

\newcommand{\lmin}{\lambda_{\min}}
\newcommand{\lmax}{\lambda_{\max}}

\begin{document}

\title{SinCoTrap: A High-Order Locally Corrected Trapezoidal Rule for Periodic Singular Integrals in Arbitrary Dimensions}
\date{}

\author{Hengzhun Chen\thanks{School of Mathematical Sciences, Fudan
    University (\href{mailto:hengzhunchen21@m.fudan.edu.cn}{\texttt{hengzhunchen21@m.fudan.edu.cn}}).} 
\and Yingzhou Li\thanks{School of Mathematical Sciences, Shanghai Key
    Laboratory for Contemporary Applied Mathematics, Fudan University and
    Key Laboratory of Computational Physical Sciences, Ministry of Education
    (\href{mailto:yingzhouli@fudan.edu.cn}{\texttt{yingzhouli@fudan.edu.cn}}).} }

\maketitle

\begin{abstract}

We present \emph{SinCoTrap} (Singularity-Corrected Trapezoidal Rule), a
high-order locally corrected trapezoidal method for periodic singular
integrals in arbitrary dimension $d$ with kernel $|\bx|^{-s}$, $0<s<d$.
The scheme preserves the uniform tensor grid and modifies only a fixed,
small stencil of weights near the singularity. For a correction order
$p$, the resulting quadrature attains the error rate $O(h^{2p+2+d-s})$.
We derive explicit, mesh-independent limiting correction weights via
analytic continuation of a special generalization of the Riemann zeta
function, yielding rapidly computable formulas that can be pretabulated
for each $(d,s,p)$. This makes SinCoTrap both efficient in application
and robust for high-order accuracy across a broad class of periodic
singular integrals.

\end{abstract}

\section{Introduction}

Singular integrals frequently arise in numerical analysis and scientific
computing in many applications. Accurate evaluation of these integrals
is essential for reliable computation of physical quantities, but the
presence of singularities poses significant challenges for standard
quadrature methods. Specifically, we focus on singular integrals in
arbitrary dimension $d$ that takes the form
\begin{equation*}
    I[\vphi \cdot \sigma] = \int_V \vphi(\bx) \sigma(\bx) \rd \bx, \quad 
    \sigma(\bx) = \frac{1}{|\bx - \bx_0|^{s}}, \quad \bx \in \bbR^d,
\end{equation*}
where $\sigma(\bx)$ is a singular function with an isolated singularity
at $\bx_0 \in \mathrm{Int}(V)$ for $0 < s < d$, $\vphi(\bx)$ is a smooth
function, and the entire integrand $\vphi(\bx)\sigma(\bx)$ is smooth
except at the singular points and is periodic with unit cell $V$.
Equivalently, $\vphi\cdot\sigma$ defines a $C^{\infty}$ function on the
punctured torus.

Various approaches have been developed for singular quadrature to
address the challenges posed by singularities, which can be broadly
classified into three categories: (1) \textit{Singularity cancellation}
\cite{duffy1982quadrature}, where a change of variables (such as to
spherical coordinates) cancels the singularity via the Jacobian,
allowing standard quadrature rules to be applied to the resulting
regular integral; (2) \textit{Singularity subtraction}
\cite{helsing2013higher,merkel1986integral}, where the singular
component is subtracted from the integrand and integrated separately by
introducing an auxiliary function that closely approximates the singular
behavior, while the regular part is handled numerically; and (3)
\textit{Singularity correction}
\cite{kapur1997high,marin2014corrected,rokhlin1990end}, where the
quadrature weights are modified to account for the effect of the
singularity and achieve high-order accuracy.

Our focus in this work is on the third category, specifically on
singularity correction methods for trapezoidal rule. The trapezoidal
rule is a simple, stable, and widely used quadrature method, celebrated
for its super-algebraic convergence for smooth periodic functions
\cite{trefethen2014exponentially}. Its accuracy deteriorates when either
periodicity or smoothness is lost: a smooth non-periodic integrand
creates boundary error, which can be corrected by endpoint or
near-boundary modifications, whereas a periodic integrand with an
isolated singularity, as in the present work, creates a local
singularity error. In both cases, one modifies only a small number of
quadrature weights while retaining the underlying uniform grid. The
resulting correction weights can be precomputed for a given boundary
behavior or singularity type and reused for a broad class of problems.

The pioneering corrected trapezoidal rule was introduced by
Rokhlin~\cite{rokhlin1990end} for one-dimensional endpoint
singularities. The central concept of the method involves adding local
correction terms to analytically cancel the leading-order terms of the
quadrature error of the trapezoidal rule. The correction weights are
computed by solving a linear system, constructed by requiring that the
corrected rule integrate exactly over the monomials, up to a desired
degree, multiplying the singular function. However, the practical
utility of this approach is limited because achieving high-order
accuracy requires correction weights whose magnitude grows rapidly,
leading to numerical instability. 

Subsequent work by Kapur and Rokhlin~\cite{kapur1997high} extended this
approach by introducing additional quadrature nodes outside the
integration interval. This strategy yielded smaller correction weights
and enabled higher-order accuracy. Furthermore, they derived a
mesh-size-independent rule by computing the asymptotic limit of the
weights as $h\rightarrow 0$, enhancing its practical utility, though
explicit closed-form expressions for these limiting weights were not
provided. The principal drawbacks of this scheme are its requirement for
function evaluations outside the domain, which can be problematic or
undefined, and the difficulties involved in computing the asymptotic
weights. 

To address these issues, Marin et al. \cite{marin2014corrected}
introduced an alternative approach using a compactly supported smooth
window function $g(\bx)$. By multiplying the singular function by $g(\bx)$,
the singularity is localized and the effect of boundary errors are
annihilated, thereby simplifying the computation of the correction
weights. For one-dimensional singular kernels of the form
$\sigma(\bx)=|\bx|^s$ with $s > -1$, they derived an explicit formula for
the converged weights, revealing a connection to the Riemann zeta
function. The method was also extended to two dimensions for the
singular kernel $\sigma(\bx)=1/|\bx|$, although a closed-form expression
for the converged weights remained elusive. Later work by Jiang and Li
\cite{jiang2022arbitrarily} improved the two-dimensional scheme by
proving the non-singularity of the underlying linear system for solving
the correction weights and employing Richardson extrapolation to compute
the weights numerically. Besides, Wu and Martinsson
\cite{wu2021corrected} found an implicit formula for the two-dimensional
weights, connecting them to the higher-order parametric derivatives of
the Epstein zeta function.

In this work, motivated by the need to accelerate the convergence of
thermodynamic limit of exchange energy in electronic structure
calculations \cite{chen2026high,xing2024unified}
without increasing computational cost, we extend the one dimensional
high-order, singularity-corrected trapezoidal rules for singular
functions in \cite{marin2014corrected} to arbitrary dimensions. Our
contributions are twofold. First, we provide a rigorous analysis of the
corrected trapezoidal rule's construction in a multi-dimensional
setting. Second, we derive simple and explicit formulas for the
correction weights via analytic continuation, which enable their
efficient and accurate computation, thereby significantly enhancing the
method's practical utility.

Beyond exchange energy calculations in electronic structure theory,
singularity corrected trapezoidal rules find broad applications across
scientific and engineering disciplines. For example, the evaluation of
surface singular integrals in boundary integral equations in 2D
\cite{bao2024singularity,wu2021zeta} and 3D \cite{izzo2023high,
wu2021corrected,wu2023unified}; the numerical discretization of the
fractional Laplacian in non-local Fokker-Planck equations in 2D
\cite{jiang2022arbitrarily}; the computation of Coulomb potentials in 3D
\cite{aguilar2005high}; and the evaluation of integral operators in
Lippmann-Schwinger equations for scattering problems
\cite{duan2009high}.

The rest of this paper is organized as follows.
\Cref{sec:singularity_correction} details the algorithm for the
singularity-corrected trapezoidal rule in arbitrary dimensions,
including the explicit construction of the correction weights.
\Cref{sec:convergence_analysis} provides a theoretical analysis of the
method's convergence, establishing guarantees for its high-order
accuracy. \Cref{sec:extension} discusses extensions of the
method to parallelotope domains and anisotropic types of singularities.
Numerical results in \cref{sec:numerical_results} validate the
theoretical analysis and demonstrate the algorithm's practical
performance. Finally, \cref{sec:conclusion} offers concluding remarks,
while the Appendix collects supporting proofs and technical derivations.

\section{Singularity corrected trapezoidal rule} \label{sec:singularity_correction}

This section presents a systematic framework for constructing a locally
corrected trapezoidal rule that handles isolated singularities in
arbitrary dimensions. We begin by establishing the necessary notation
and motivating the design of the local correction procedure.
Subsequently, we provide a complete description of the algorithm,
including explicit constructions for the correction weights. For
clarity, extended proofs and technical derivations are deferred to the
appendix.

\subsection*{Setup and notations}

In this paper, we denote integer lattice points with bold letters (e.g.,
$\bi\in\bbZ^d$). Furthermore, we use Greek letters, such as $\balpha =
(\alpha_1, \ldots, \alpha_d) \in \bbN^d$, to represent nonnegative
integer lattice points, which frequently serve as multi-indices in
Taylor expansions. The distinction between general integer points $\bi$
and nonnegative multi-indices $\balpha$ will be clear from context.
For vector $\bi=(i_1,\ldots,i_d) \in \bbR^d$, different norms for
vectors are defined as follows:
\begin{equation*}
    \infnorm{\bi} := \max(|i_1|, \cdots, |i_d|), \quad 
    \onenorm{\bi} := |i_1| + \cdots + |i_d|, \quad 
    |\bi| := \left(|i_1|^2 + \cdots + |i_d|^2\right)^{1/2}.
\end{equation*}

By translation, we assume the isolated singularity is located at the
origin and consider
\begin{equation*}
    I[\vphi \cdot \sigma] = \int_V \vphi(\bx)\sigma(\bx)\,\rd \bx,
    \qquad
    \sigma(\bx) = \frac{1}{|\bx|^{s}}, \qquad 0 < s < d,
\end{equation*}
where $\vphi$ is smooth, $\vphi\cdot\sigma$ is smooth except at the
singular points and periodic with unit cell $V=[-a,a]^d$ for some $a>0$.

We discretize $V$ by a uniform tensor-product grid. Let $N\in\bbN_{+}$
and set mesh size $h=a/N$. For lattice indices
$\bi=(i_1,\ldots,i_d)\in\bbZ^d$, define grid points
\begin{equation*}
    \bx_{\bi} := \bi h,
    \qquad
    \calX := \{\bx_{\bi} : \infnorm{\bi}\le N\}.
\end{equation*}
The set $\calX$ contains the origin and is symmetric in the sense that
$\bx_{\bi}\in\calX$ implies $-\bx_{\bi}\in\calX$.

Finally, we denote by $T_h^0[f]$ the \emph{punctured} trapezoidal rule
(excluding the origin). It is convenient to write it in the
tensor-product form 
\footnote{Given any set $A$, we use $\# A$ to denote the number of elements in $A$.}
\begin{equation*}
    T_h^0[f] := h^d \sum_{0 < \infnorm{\bi} \leq N} c_{\bi}\, f(\bx_{\bi}),
    \qquad
    c_{\bi} := 2^{-m(\bi)}, \quad m(\bi):=\#\{j: |i_j|=N\}.
\end{equation*}
Thus $c_{\bi}=1$ for interior points, and each boundary coordinate
contributes a factor $1/2$ (e.g., in three dimensions: face/edge/corner
weights $1/2$, $1/4$, $1/8$).

\subsection{Motivation}

For smooth periodic integrands, the trapezoidal rule converges
super-algebraically \cite{trefethen2014exponentially}. However, in our
setting, the integrand satisfies smooth periodic boundary condition but
has an isolated singularity at the origin, which limits the convergence
of the standard rule. The goal of singularity correction is to restore
high-order accuracy by modifying only a small number of quadrature
weights near the singular point.

\paragraph{Step 1: extract the singularity effect.}
Note that the boundary coordinates in the trapezoidal rule involve more
complicated weights, in order to simplify the analysis and focus on the
singularity, we introduce a smooth radial cutoff function
\begin{equation} \label{eq:boundary_cutoff}
    \eta(\bx) = \eta(|\bx|) = 
    \begin{cases}
        1, & |\bx|\leq \frac{a}{4},\\
        0, & |\bx|\geq \frac{a}{2}.
    \end{cases}
\end{equation}
This cutoff separates the local singularity effect and the boundary
contribution, which makes the singular correction independent of the
particular quadrature treatment \cite{fornberg2021improving,
kapur1997high} used near the boundary for more general boundary
conditions.

Using the cutoff function $\eta(\bx)$, we split:
\begin{equation*}
    I[\vphi\cdot\sigma]
    = \int_V \vphi(\bx)\sigma(\bx)\left(1-\eta(\bx)\right)\rd\bx
    + \int_V \vphi(\bx)\sigma(\bx)\,\eta(\bx)\rd\bx.
\end{equation*}
The first integrand agrees with the original integrand near $\partial V$
and is smooth periodic; hence its trapezoidal error is
super-algebraically small. The second term is supported strictly inside
$V$ and contains the singularity at the origin. Therefore, by
introducing $\phi=\vphi \eta$, it suffices to focus on integrals of the
form
\begin{equation*}
    I[\phi\cdot\sigma] = \int_V \phi(\bx)\sigma(\bx)\,\rd\bx,
\end{equation*}
where $\phi$ is smooth and compactly supported in $V$.

\paragraph{Step 2: local Taylor surrogate and the correction term.}
The difficulty is that $\phi(\bx)\sigma(\bx)$ is singular at $\bx=0$. To
isolate the leading singular behavior and improve the smoothness of the
integrand, fix an order $p\in\bbN$ and introduce a localized Taylor
surrogate. Let $P_{\phi}$ be the Taylor polynomial of $\phi$ at $\bx=0$
of degree $2p+1$,
\begin{equation*}
    P_{\phi}(\bx) := \sum_{\onenorm{\balpha}=0}^{2p+1}
    \frac{\partial^{\balpha}\phi(0)}{\balpha!}\,\bx^{\balpha},
\end{equation*}
and choose a radial localized window function $g\in C_c^{\infty}(V)$
such that
\begin{equation} \label{eq:localized_function_g}
    g(\bx)=g(|\bx|),\qquad g(0)=1,\qquad \partial^{\balpha}g(0)=0,
    \quad 1\leq \onenorm{\balpha}\leq 2p+1.
\end{equation}
Define $\tphi(\bx):=P_{\phi}(\bx)g(\bx)$. Then $\phi-\tphi$ is compactly
supported in $V$ and vanishes to order $2p+1$ at the origin:
\begin{equation*}
    \partial^{\balpha}\bigl(\phi-\tphi\bigr)(0)=0,
    \quad 0\leq \onenorm{\balpha}\leq 2p+1,
\end{equation*}
so that $(\phi-\tphi)(\bx)=O(|\bx|^{2p+2})$ near the origin. Hence
$(\phi-\tphi)(\bx)\sigma(\bx)=O(|\bx|^{2p+2-s})$, which has a weakened
singularity compared with $\phi(\bx)\sigma(\bx)$. This suggests the
decomposition
\begin{align*}
    I[\phi\cdot\sigma]
    &= I[(\phi-\tphi)\cdot\sigma] + I[\tphi\cdot\sigma] \\
    &= T_h^0[\phi\cdot\sigma] + (I-T_h^0)[(\phi-\tphi)\cdot\sigma]
       + \bigl(I[\tphi\cdot\sigma]-T_h^0[\tphi\cdot\sigma]\bigr).
\end{align*}
The remaining task is to approximate the last difference
$I[\tphi\cdot\sigma]-T_h^0[\tphi\cdot\sigma]$ by a \emph{local}
correction supported on a fixed stencil near $\bx=0$.

\paragraph{Step 3: moment matching and a linear system.}
Motivated by Step 2, we approximate the local defect
$I[\tphi\cdot\sigma]-T_h^0[\tphi\cdot\sigma]$ by a correction supported
on a small stencil $\calX_p\subset\bbZ^d$:
\begin{equation*}
    I[\tphi \cdot \sigma] - T_h^0[\tphi \cdot \sigma] 
    = a(h) \sum_{\bi \in \calX_p} w_{\bi}^h \, \tphi(\bi h),
\end{equation*}
where $a(h)$ is a scaling factor and $\{w_{\bi}^h\}$ are correction
weights. Since $\tphi=P_\phi g$, this requirement is enforced by
matching the trapezoidal defect on localized monomials. That is, for all
multi-indices $\balpha$ with $0\le \onenorm{\balpha}\le 2p+1$, we impose
\begin{equation} \label{eq:weight_equation}
    \int_V g(\bx)\sigma(\bx)\bx^{\balpha}\,\rd\bx
    = T_h^0[g\cdot\sigma\cdot \bx^{\balpha}]
    + a(h) \sum_{\bi\in\calX_p} w_{\bi}^h\,(\bi h)^{\balpha}\,g(\bi h).
\end{equation}
This gives a finite linear system for the unknowns $\{w_{\bi}^h\}$.
Because $g(\bx)$ and $\sigma(\bx)$ are radial, and because the domain,
grid, and stencil are symmetric, the moment conditions associated with
odd monomials are automatically satisfied. Thus the system reduces to
even moments and can be made square by a suitable choice of $\calX_p$,
together with symmetry conditions on the weights.

In practice we will avoid constructing $P_\phi g$ or evaluating
derivatives of $\phi$. Thus, the resulting corrected trapezoidal rule is
defined by applying the same local correction to the original function
$\phi$:
\begin{equation*}
    T_h^0[\phi\cdot\sigma]
    + a(h)\sum_{\bi\in\calX_p} w_{\bi}^h\,\phi(\bi h).
\end{equation*}

Finally, letting $h\to 0$ yields \emph{converged} weights
$w_{\bi}=\lim_{h\to 0^+} w_{\bi}^h$ that are independent of $h$ (and, in
fact, independent of the particular choice of $g$ within the above
class). We highlight that these converged weights give the same
approximation order as the $h$-dependent weights, while being more
practical to compute and apply.

\subsection{Linear system for the correction weights}

We now derive the concrete linear system for the correction weights. In
this paper, we adopt the correction stencil $\calX_p$ with $p \in \bbN$
defined as
\begin{equation*}
    \calX_p = \left\{ \bi \in \bbZ^d \, : \, 0 \leq \onenorm{\bi} \leq p \right\}.
\end{equation*}
This is a natural choice for the radial singular kernel $1/|\bx|^s$,
though other stencils are possible. For illustration, we visualize these
stencils for $d=2$ and $p=0, 1, 2$ in
\cref{fig:stencil_Xp_radial_symmetric_2d}.

\begin{figure}[htbp]
    \centering
    \includegraphics[width=0.9\textwidth]{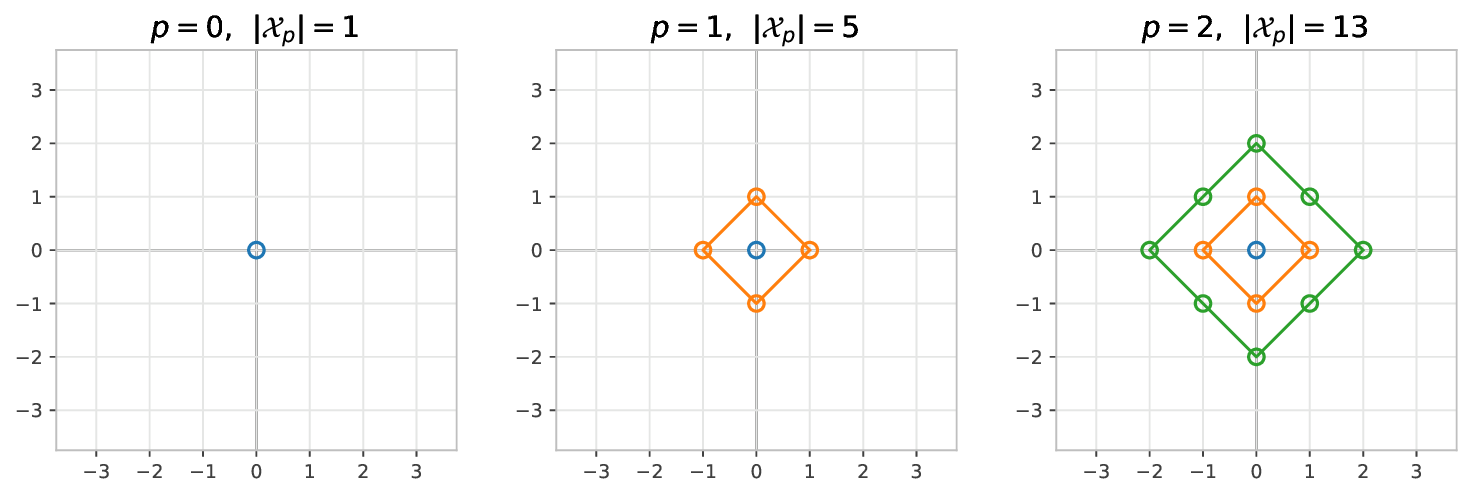}
    \caption{Symmetric correction stencils $\calX_p$ for $d=2$ and $p=0, 1, 2$.}
    \label{fig:stencil_Xp_radial_symmetric_2d}
\end{figure}

Given the radial symmetry of both the singular kernel and the localized
function $g(\bx)$, and the symmetry of the integral domain $V$ and grid
points $\calX$, we enforce symmetry of the weights under independent
sign flips of each coordinate. For $\bbeta=(\beta_1, \beta_2, \ldots,
\beta_d) \in\bbN^d$, define the sign-orbit
\begin{equation*}
    \calG_{\bbeta} := \left\{ (\pm \beta_1, \pm \beta_2, \ldots, \pm \beta_d) \in \bbZ^d \right\}.
\end{equation*}
We then impose the symmetry constraint
\begin{equation} \label{eq:symmetric_condition}
    w_{\bi}^h = w_{\bbeta}^h, \quad \text{for } \bi \in \calG_{\bbeta}, \, \, 0 \leq \onenorm{\bbeta} \leq p.
\end{equation}
Under the symmetry of weights, all moment constraints in
\eqref{eq:weight_equation} associated with odd monomials vanish
automatically, so it suffices to consider even moments only:
\begin{equation} \label{eq:weight_equation_even}
    \int_V g(\bx) \sigma(\bx) \bx^{2\balpha} \rd \bx = T^0_h[g \cdot \sigma \cdot \bx^{2\balpha}] 
    + a(h) \sum_{0 \leq \onenorm{\bbeta} \leq p} w_{\bbeta}^h \sum_{\bi \in \calG_{\bbeta}} (\bi h)^{2\balpha} \, g(\bi h), 
    \quad 0 \leq \onenorm{\balpha} \leq p.
\end{equation}
At this time, the number of equations in \eqref{eq:weight_equation_even}
is equal to the number of distinct unknowns $w_{\bbeta}^h$, which we
denote by $N_w$. 

\begin{remark}
    The symmetry \eqref{eq:symmetric_condition} can be strengthened
    further (e.g., grouping points with the same Euclidean distance to
    the origin) to reduce the number of unknowns. Such additional
    reductions must be paired with a compatible set of moment conditions
    to keep the linear system nonsingular.
\end{remark}

\textbf{Scaling factor.} 
For the homogeneous kernel $\sigma(\bx)=|\bx|^{-s}$, the scaling factor
$a(h)$ is determined by matching orders of $h$ at both sides of
\eqref{eq:weight_equation_even}. Applying the change variables
$\bx=h\by$ we have
\begin{align*}
    &\quad \int_V g(\bx) \sigma(\bx) \bx^{2\balpha} \rd \bx - T^0_h[g \cdot \sigma \cdot \bx^{2\balpha}] \\
    & = h^{d+2\onenorm{\balpha}-s} \Big( \int_{|\bx|_{\infty}\leq N} g(\bx h)\sigma(\bx) \bx^{2\balpha} \rd \bx - \sum_{0 < \infnorm{\bi} \leq N} g(\bi h) \sigma(\bi) \bi ^{2\balpha} \Big),
\end{align*}
where $N=a/h$. Therefore, we take the scaling factor as
\begin{equation*}
    a(h) = h^{d-s}.
\end{equation*}

\textbf{Matrix form.}
To recast \eqref{eq:weight_equation_even} in matrix form, order the
moment indices as
\begin{equation*}
    \left\{ \balpha \in \bbN^d \, : \, 0 \leq \onenorm{\balpha} \leq p \right\} 
    = \left\{ \balpha_1, \balpha_2, \ldots, \balpha_{N_w} \right\}.
\end{equation*}
Besides, order the sign-orbit representatives as
$\bbeta_1,\ldots,\bbeta_{N_w}$ in the same sequential order, where
$\bbeta_k\in\bbN^d$ and $0\leq\onenorm{\bbeta_k}\leq p$.
Then the linear system \eqref{eq:weight_equation_even} can be rewritten
as
\begin{equation*}
    \sum_{k=1}^{N_w} w_{\bbeta_k}^h \sum_{\bi \in \calG_{\bbeta_k}} \bi^{2\balpha_j} \, g(\bi h) 
    = \int_{\bbR^d} g(\bx h) \sigma(\bx) \bx^{2\balpha_j} \rd \bx
    - \sum_{\infnorm{\bi}=1}^{\infty} g(\bi h) \sigma(\bi) \bi^{2\balpha_j} 
    := b_j^h,
\end{equation*} 
for $j=1, \ldots, N_w$, where we have used that $g(\bx)$ is
compactly supported. Since $g(\bi h)$ takes the same value on each orbit
$\calG_{\bbeta_k}$, we introduce the matrix notations 
\begin{equation} \label{eq:matrix_of_zeta}
    (A)_{jk} = \sum_{\bi \in \calG_{\bbeta_k}} \bi^{2\balpha_j} 
    \quad \text{and} \quad (G^h)_{jk}=g(\bbeta_k h) \delta_{jk}, 
\end{equation}
for $j, k=1, \ldots, N_w$, where $\delta_{jk}$ is the Kronecker delta
notation and $G^h$ is diagonal. Thus, we can express the linear system
in matrix form as
\begin{equation} \label{eq:weight_linear_system}
    A G^h w^h = b^h,
\end{equation}
where $w^h$ and $b^h$ are the weight vector and the right-hand-side
vector, respectively.

The well-posedness of the moment-matching system rests on the
invertibility of the coefficient matrix $A$, which depends only on the
chosen stencil and the monomials used in the moment conditions.

\begin{lemma}[Invertibility of the coefficient matrix] \label{lem:non_singular_coeff}
    The coefficient matrix $A$ defined in \eqref{eq:matrix_of_zeta} is
    nonsingular.
\end{lemma}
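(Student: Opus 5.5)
The plan is to strip away the sign-orbit structure first and then reduce the claim to a polynomial interpolation (unisolvence) statement on a lower set of tensor-product nodes. Since every exponent in $\bi^{2\balpha_j}$ is even, the value is the same for every $\bi\in\calG_{\bbeta_k}$. Hence
\begin{equation*}
    (A)_{jk} = \#\calG_{\bbeta_k}\, \bbeta_k^{2\balpha_j}, \qquad \#\calG_{\bbeta_k} = 2^{\#\{l:\, (\bbeta_k)_l\neq 0\}} > 0 .
\end{equation*}
So $A = M D$, where $D=\mathrm{diag}(\#\calG_{\bbeta_k})$ is invertible and $(M)_{jk} = \prod_{l=1}^d u_{(\bbeta_k)_l}^{(\balpha_j)_l}$ with nodes $u_m := m^2$. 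It therefore suffices to show that $M$ is nonsingular.

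The matrix $M$ is the collocation matrix of the monomials $\{\bx^{\balpha}: \onenorm{\balpha}\le p\}$ at the points $\{\bx_{\bbeta}:=(u_{\beta_1},\ldots,u_{\beta_d}) : \onenorm{\bbeta}\le p\}$. The key facts are that the index set $\Lambda:=\{\balpha\in\bbN^d:\onenorm{\balpha}\le p\}$ is a lower (downward closed) set, and that the one-dimensional nodes $u_0<u_1<\cdots<u_p$ are distinct because $m\mapsto m^2$ is injective on $\bbN$. Nonsingularity of $M$ is then equivalent to the statement that the only polynomial in $\mathrm{span}\{\bx^{\balpha}:\balpha\in\Lambda\}$ vanishing at all $\bx_{\bbeta}$, $\bbeta\in\Lambda$, is zero.

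To prove this, I would introduce the tensor Newton basis
\begin{equation*}
    N_{\balpha}(\bx) := \prod_{l=1}^d \prod_{m=0}^{\alpha_l-1} (x_l - u_m), \qquad \balpha\in\Lambda .
\end{equation*}
Each $N_{\balpha}$ equals $\bx^{\balpha}$ plus monomials $\bx^{\bgamma}$ with $\bgamma\le\balpha$ componentwise, and all such $\bgamma$ lie in $\Lambda$ because $\Lambda$ is a lower set. So the change of basis from monomials to $\{N_{\balpha}\}_{\balpha\in\Lambda}$ is unitriangular with respect to the componentwise partial order, and both families span the same space. Next, $N_{\balpha}(\bx_{\bbeta})=0$ whenever $\beta_l<\alpha_l$ for some $l$, because the factor $(x_l-u_{\beta_l})$ appears. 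On the other hand, $N_{\balpha}(\bx_{\balpha})=\prod_l\prod_{m<\alpha_l}(u_{\alpha_l}-u_m)\neq 0$ by distinctness of the nodes. Ordering $\Lambda$ by $\onenorm{\cdot}$ (any linear extension of the partial order) makes the matrix $\bigl(N_{\balpha}(\bx_{\bbeta})\bigr)$ triangular with nonzero diagonal, hence invertible. Composing with the invertible change of basis shows that $M$, and therefore $A=MD$, is nonsingular.

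The main obstacle is this unisolvence step. The points $\bx_{\bbeta}$ are not in general position, so a generic Vandermonde-type argument does not apply. The argument must exploit exactly the lower-set structure shared by the stencil $\calX_p$ and the moment set, together with the injectivity of squaring on $\bbN$. The triangular Newton basis handles this cleanly.
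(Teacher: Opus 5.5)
Your proof is correct and is essentially the paper's argument. The paper also factors $A=MD$ and then uses the tensor-product polynomials $Q_{\bgamma}(\bx)=\prod_{\ell}\prod_{r<\gamma_\ell}(x_\ell^2-r^2)$, which are exactly your Newton polynomials $N_{\bgamma}$ with $u_\ell=x_\ell^2$ substituted; this gives the same relation $E=LM$, with $E$ triangular with nonzero diagonal and $L$ unitriangular.
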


The proof of \cref{lem:non_singular_coeff} is given in
\cref{app:nonsingularity_proof}.

Since $g(0)=1$, the diagonal matrix $G^h$ satisfies $G^h\to I$ as
$h\to 0^+$. Thus the limiting weights are determined by the
$h$-independent system $Aw=b$, once the limit of the right-hand side
$b^h$ is understood. This is the focus of the next section.

\subsection{Converged correction weights}

We now pass to the limit $h\to 0^+$ in the weight system
\eqref{eq:weight_linear_system}. Recall that $A$ is independent of $h$,
while $G^h\to I$ as $h\to 0^+$. Thus the key step is to understand the
asymptotic behavior (and then the limit) of the right-hand side vector
$b^h$. This section has two goals: (i) justify the existence and
convergence rate of the limiting weights $w$, and (ii) evaluate the
limiting right-hand side $\lim_{h \to 0^+} b^h$ via analytic
continuation. Proofs of technical claims are deferred to the appendix.

For the first part, we have the following results on the convergence
order of the right-hand side $b^h$ and the correction weights $w^h$ with
respect to $h$.

\begin{lemma}[Convergence order of RHS] \label{lem:error_order_rhs} 
    Let $p \in \bbN$. Assume $g(\bx)\in \ccinf(\bbR^d)$ satisfies
    \begin{equation*}
        \partial^{\bbeta} g(0) = 0, \quad 
        \text{for } 1 \leq \onenorm{\bbeta} \leq 2p+1.
    \end{equation*}
    Given multi-index $\balpha \in \bbN^d$ such that $0 \leq
    \onenorm{\balpha} \leq p$, let $s\in \bbC$ lie in the strip $0 <
    \rep(s) < d + 2\onenorm{\balpha}$. Then for $h > 0$ sufficiently
    small we have
    \begin{equation} \label{eq:error_order_rhs}
        \int_{\bbR^d} g(\bx h) \frac{\bx^{2\balpha}}{|\bx|^s} \rd \bx 
        - \sum_{\infnorm{\bi}=1}^{\infty} g(\bi h) \frac{\bi^{2\balpha}}{|\bi|^s} 
        = g(0)b_{\balpha}(s) + O(h^{2p+2}),
    \end{equation}
    where $b_{\balpha}(s)$ are constants independent of $g(\bx)$ and
    $h$.
\end{lemma}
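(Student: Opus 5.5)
The plan is to identify $b_{\balpha}(s)$ with minus an Epstein-type zeta function and to work on the half-plane where that lattice sum converges. Since $g(0)=1$ for the window in \eqref{eq:localized_function_g}, the claim amounts to
\[
b_{\balpha}(s) = -Z_{\balpha}(s), \qquad Z_{\balpha}(s) := \sum_{\bi\neq 0} \frac{\bi^{2\balpha}}{|\bi|^{s}} \quad (\rep(s) > d+2\onenorm{\balpha}).
\]
Here $Z_{\balpha}$ is continued meromorphically to $\bbC$ in the standard way, through a theta-function/Mellin representation or Poisson summation. Its only possible pole is at $s=d+2\onenorm{\balpha}$, so it is analytic in the strip of the lemma. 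The value is independent of $g$ and $h$ by construction.

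Write $F_s(\bx)=\bx^{2\balpha}|\bx|^{-s}$ and $\psi(\by)=1-g(\by)$. By the hypothesis on $g$, $\psi$ is smooth, vanishes to order $2p+2$ at the origin, and equals $1$ outside $\supp g$. Formally, the left-hand side $E(h,s)$ of \eqref{eq:error_order_rhs} splits as
\[
E(h,s) = \Big(\int_{\bbR^d} F_s - \sum_{\bi\neq0} F_s(\bi)\Big) - \Big(\int_{\bbR^d}\psi(\bx h)F_s(\bx)\,\rd\bx - \sum_{\bi\neq0}\psi(\bi h)F_s(\bi)\Big).
\]
In the first bracket the integral of a homogeneous function over $\bbR^d$ is assigned the value $0$, which is its analytic continuation, and the sum is $Z_{\balpha}(s)$. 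The second bracket, call it $R(h,s)$, converges absolutely for $d+2\onenorm{\balpha}<\rep(s)<d+2\onenorm{\balpha}+2p+2$. In that range the integrand is integrable both at the origin (because $\psi$ vanishes to order $2p+2$) and at infinity.

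To make the split rigorous I will show that $E(h,s)+Z_{\balpha}(s)+R(h,s)$ is analytic in $s$ on a connected region containing both the original strip and the strip above. I will then show that it vanishes identically. To do this, insert a fixed radial cutoff $\chi$ with $\chi=1$ near $0$ and split $F_s=\chi F_s+(1-\chi)F_s$. The near-origin parts are finite sums and integrals over compact sets, hence entire in $s$ up to the behaviour of $\int \chi F_s$ at $\rep(s)=d+2\onenorm{\balpha}$. The far parts involve $(1-\chi)F_s$, whose integral over $\bbR^d$ and whose lattice sum both continue meromorphically, for example by writing the tail as a Mellin transform. After this bookkeeping the identity holds on an open set, so by uniqueness of analytic continuation it holds throughout. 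This yields $E(h,s)=-Z_{\balpha}(s)-R(h,s)$, where $R(h,s)$ is understood as its continuation.

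The remaining task, and the main obstacle, is the bound $R(h,s)=O(h^{2p+2})$ uniformly, first in the convergent strip and then after continuation. Substituting $\by=\bx h$ gives
\[
R(h,s)=h^{s-d-2\onenorm{\balpha}}\Big(\int_{\bbR^d}\psi F_s\,\rd\by - h^d\sum_{\bi\neq0}(\psi F_s)(\bi h)\Big),
\]
which is $h^{s-d-2\onenorm{\balpha}}$ times a trapezoidal-rule error for the function $\psi F_s$. This function is smooth away from the origin, homogeneous of degree $2\onenorm{\balpha}-s$ (hence decaying) outside $\supp g$, and behaves like a homogeneous function of degree $2\onenorm{\balpha}+2p+2-s$ at the origin. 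A Navot/Lyness-type generalized Euler--Maclaurin expansion, obtained via Poisson summation with a dyadic partition of unity, gives an error of order $h^{d+2\onenorm{\balpha}+2p+2-s}$. The origin contributes at that order, while the smooth compactly supported middle part and the pure homogeneous tail contribute nothing polynomially: the tail's trapezoidal error is exactly the zeta term already extracted. Multiplying by $h^{s-d-2\onenorm{\balpha}}$ then gives $O(h^{2p+2})$.

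I expect the hardest part to be doing this cleanly, with constants uniform on compact sets of $s$ so that the bound survives analytic continuation back into $0<\rep(s)<d+2\onenorm{\balpha}$. The first thing I would try is a Fourier-side argument: the Fourier transform of $\psi F_s$ has a known expansion at infinity coming from its origin singularity, whose leading term is homogeneous of degree $s-2\onenorm{\balpha}-2p-2-d$, with coefficients that are entire in $s$ after renormalization. Summing over the nonzero dual lattice points then gives the bound directly, together with analyticity in $s$.
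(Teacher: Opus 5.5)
Your identity $E(h,s)=-Z_{\balpha}(s)-R(h,s)$ is correct: for $s_{\balpha}<\rep(s)<s_{\balpha}+2p+2$, with $s_{\balpha}=d+2\onenorm{\balpha}$, the continuation of $\int g(\bx h)F_s\,\rd\bx$ is $-\int\psi(\bx h)F_s\,\rd\bx$. Your $-Z_{\balpha}$, i.e.\ the paper's $-Z_{2\balpha}$, is also the right constant. The gap is the step you flag as the main obstacle, and the mechanism you propose for it does not work.

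All your estimates of $R(h,s)$ live in the strip $s_{\balpha}<\rep(s)<s_{\balpha}+2p+2$, where $\psi F_s$ is integrable. In the lemma's strip $0<\rep(s)<s_{\balpha}$, $R$ is by definition $-E-Z_{\balpha}$, so bounding it there is the lemma itself. Analytic continuation transports identities, not estimates uniform in $h$, however uniform your constants are on compact subsets of the upper strip. Your own formula shows the danger: $R=h^{s-s_{\balpha}}\times(\text{trapezoidal error of }\psi F_s)$, and the entire function $h^{2p+2}h^{s-s_{\balpha}}$ is $O(h^{2p+2})$ for $\rep(s)\ge s_{\balpha}$ but not for $\rep(s)<s_{\balpha}$.

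Concretely, take $\chi\in\ccinf$ equal to $1$ near $\supp g$. The pure-tail piece $\int(1-\chi(\bx h))F_s\,\rd\bx-\sum_{\bi\neq0}(1-\chi(\bi h))F_s(\bi)$ is $O(h^{\infty})$ for $\rep(s)>s_{\balpha}$. Its continuation to $\rep(s)<s_{\balpha}$, however, is $-Z_{\balpha}(s)-\bigl[\int\chi(\bx h)F_s\,\rd\bx-\sum_{\bi\neq0}\chi(\bi h)F_s(\bi)\bigr]$. Showing this is small is exactly the statement you are proving, for the window $\chi$ and with $b_{\balpha}=-Z_{\balpha}$.

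The Fourier fallback does not close the gap as described either. For $\rep(s)<s_{\balpha}$ one has $\psi F_s\notin L^1$, so Poisson summation is not available as written. Moreover, the leading term of $\widehat{\psi F_s}$, homogeneous of degree $s-s_{\balpha}-2p-2$, is in general not summable over $\bbZ^d\setminus\{0\}$ once $\rep(s)\ge 2\onenorm{\balpha}+2p+2$. That range lies inside the lemma's strip whenever $d>2p+2$; the paper's test case $d=3$, $s=2$, $p=0$, $\balpha=\bzero$ is an example.

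The detour comes from insisting on the explicit constant up front. The lemma only asserts that some $b_{\balpha}(s)$ independent of $g$ and $h$ exists; the paper identifies it with $-Z_{2\balpha}(s)$ only afterwards (\cref{lem:holom_bs_D1,lem:exist_bs_D2,lem:expression_bs_D3}). Its proof never leaves $0<\rep(s)<s_{\balpha}$:
\begin{itemize}
\item Take a fixed radial cutoff $\eta$ vanishing for $|\bx|\le\frac12$ and equal to $1$ for $|\bx|\ge1$.
\item The near part $\int_{|\bx|\le1}g(\bx h)F_s(1-\eta)\,\rd\bx$ equals $g(0)\cdot\mathrm{const}+O(h^{2p+2})$ by Taylor expansion of $g$, because $|\bx|^{2\onenorm{\balpha}+2p+2-\rep(s)}$ is integrable near $0$.
\item The far part $f_h=g(\cdot\,h)F_s\eta$ lies in $\ccinf$, so Poisson summation applies.
\item Integrating by parts $q\ge2p+3+d+2\onenorm{\balpha}$ times in a coordinate with $|k_j|=\infnorm{\bk}$ gives $\widehat{f_h}(\bk)=g(0)W(\bk)/(2\pi k_j)^q+O(h^{2p+2}|k_j|^{-q})$, which is summable over $\bk\ne0$.
\end{itemize}

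If you prefer your Euler--Maclaurin idea, apply it in the lemma's strip directly to the compactly supported $gF_s$, whose singularity is integrable there, rather than to $\psi F_s$. The homogeneous expansion of $gF_s$ at $0$ is $g(0)F_s$ followed by terms of degree at least $2\onenorm{\balpha}+2p+2-s$. A Lyness-type expansion then gives trapezoidal error $-h^{s_{\balpha}-s}\bigl(g(0)Z_{2\balpha}(s)+O(h^{2p+2})\bigr)$, which after rescaling is the lemma. The cost is relying on that expansion for complex exponents. Finally, the lemma does not assume $g(0)=1$; use $\psi=g(0)-g$ instead.
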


We remark that in order to give an explicit formula of the right-hand
side $b_{\balpha}(s)$ by analytic continuation, we extend the range of
$s$ to the strip $0 < \rep(s) < d + 2\onenorm{\balpha}$ in
\cref{lem:error_order_rhs}. 

\begin{lemma}[Convergence order of weights] \label{lem:error_order_weights}
    For corrected weights $w_{\bi}^h$ defined in
    \eqref{eq:weight_equation} satisfying symmetric condition
    \eqref{eq:symmetric_condition} and its limit $w_{\bi} =
    \lim_{h\rightarrow 0^+} w_{\bi}^h$, we have
    \begin{equation*}
        \left| w_{\bi}^h - w_{\bi} \right| = O(h^{2p+2}), \quad 0 \leq \onenorm{\bi} \leq p.
    \end{equation*}
\end{lemma}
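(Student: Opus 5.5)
We combine \cref{lem:non_singular_coeff} and \cref{lem:error_order_rhs} with a perturbation argument for the linear system \eqref{eq:weight_linear_system}. The system is $A G^h w^h = b^h$, where $A$ is fixed and invertible. The diagonal matrix $G^h$ has entries $g(\bbeta_k h)$, and $b^h$ has entries $b_j^h$. Take $s$ real with $0<s<d$. This lies in the strip $0<\rep(s)<d+2\onenorm{\balpha_j}$ for every $j$, so \cref{lem:error_order_rhs} applies to every row. Since $g(0)=1$, it gives
\begin{equation*}
    b_j^h = b_{\balpha_j}(s) + O(h^{2p+2}), \qquad j=1,\ldots,N_w .
\end{equation*}
Write $b$ for the vector with entries $b_{\balpha_j}(s)$. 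We then define the candidate limit $w := A^{-1} b$, which is $h$-independent and well defined by \cref{lem:non_singular_coeff}.

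Next we control $G^h$. Because $g$ is radial, smooth, and satisfies \eqref{eq:localized_function_g}, Taylor's theorem at the origin gives $g(\bx) = 1 + O(|\bx|^{2p+2})$. Hence for each fixed stencil representative $\bbeta_k$ with $\onenorm{\bbeta_k}\le p$,
\begin{equation*}
    g(\bbeta_k h) = 1 + O(h^{2p+2}),
\end{equation*}
with a constant depending only on $p$, $d$ and $g$, since $|\bbeta_k|\le p$. So $G^h = I + E^h$ with $\|E^h\| = O(h^{2p+2})$. For $h$ small, $G^h$ is invertible with $(G^h)^{-1} = I + O(h^{2p+2})$; explicitly, its entries are $1/g(\bbeta_k h)$.

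We then solve and subtract. From $w^h = (G^h)^{-1} A^{-1} b^h$ we get
\begin{equation*}
    w^h - w = (G^h)^{-1} A^{-1}(b^h - b) + \bigl((G^h)^{-1} - I\bigr) A^{-1} b .
\end{equation*}
Both terms are $O(h^{2p+2})$, because $\|A^{-1}\|$ and $\|b\|$ are fixed constants. This shows that $\lim_{h\to0^+} w^h$ exists and equals $w$, and that the rate is $O(h^{2p+2})$ for each orbit representative. The symmetry constraint \eqref{eq:symmetric_condition} then extends the bound to every $\bi$ with $0\le\onenorm{\bi}\le p$.

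The substance of the argument lies in \cref{lem:error_order_rhs}, which we are allowed to assume. The only real points to verify here are that the flatness of $g$ at the origin gives the $h^{2p+2}$ rate for $G^h$, and that the chosen $s$ falls in the admissible strip for every moment index. Both are routine.
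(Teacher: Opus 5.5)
Your proof is correct and follows essentially the same route as the paper. Both arguments invert $AG^hw^h=b^h$, use the flatness of $g$ at the origin to get $\|I-G^h\|=O(h^{2p+2})$, take $\|b^h-b\|=O(h^{2p+2})$ from \cref{lem:error_order_rhs}, and split $w^h-w$ into two perturbation terms. The only difference is minor: your splitting pairs the $G^h$-perturbation with the fixed vector $A^{-1}b$ rather than with $A^{-1}b^h$, which spares you the (trivial) uniform bound on $\|b^h\|$ that the paper invokes.
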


\begin{proof}
    According to linear system \eqref{eq:weight_linear_system} and the
    non-singularity in \cref{lem:non_singular_coeff}, we have
    \begin{equation*}
        w^h = (G^h)^{-1} A^{-1} b^h, \quad w = A^{-1} b.
    \end{equation*}
    By the assumptions on $g(\bx)$, Taylor's theorem gives
    \begin{equation*}
        |g(\bbeta h) - 1| = |g(\bbeta h)-g(0)| \leq C|\bbeta h|^{2p+2} \leq C h^{2p+2},
        \qquad 0\leq \onenorm{\bbeta}\leq p.
    \end{equation*}
    Hence $\|I-G^h\|_2\leq Ch^{2p+2}$ and $(G^h)^{-1}$ is uniformly
    bounded for all sufficiently small $h$. Moreover,
    \cref{lem:error_order_rhs} implies $\|b^h-b\|_2\leq Ch^{2p+2}$.
    Since $b$ is fixed, this also gives the uniform bound 
    $\|b^h\|_2\leq \|b\|_2+\|b^h-b\|_2\leq C$ for all sufficiently small
    $h$. Therefore
    \begin{align*}
        \|w^h - w\|_2 
        &= \| (G^h)^{-1} A^{-1} b^h - A^{-1} b \|_2 
        = \| (G^h)^{-1} (I - G^h) A^{-1} b^h + A^{-1} (b^h - b) \|_2 \\
        &\leq \|(G^h)^{-1}\|_2 \|I - G^h\|_2 \|A^{-1}\|_2 \|b^h\|_2 + \|A^{-1}\|_2 \|b^h - b\|_2 
        leq C h^{2p+2},
    \end{align*}
    which yields the stated componentwise bound.
\end{proof}

For the second part, we compute the limit of right-hand side $b^h$ as
$h\rightarrow 0^+$ explicitly. Inspired by the computation of lattice
sums and Wigner limit for the electron sums in chemistry
\cite{borwein2014lattice}, we derive an explicit expression of the limit
of $b^h$ by means of analytic continuation.

To make the analytic continuation between different regions clear, we
introduce the following notation.

\begin{definition}[Analytic continuation operator]
    Let $D_1$ be a non-empty domain in $\bbC$, let $D_2\subset\bbC$ be a
    non-empty set, and let $\Omega\subset\bbC$ be a connected open set
    such that $D_1\cup D_2\subset\Omega$. For an analytic function
    $f^{D_1}$ on $D_1$, suppose there exists an analytic function $F$ on
    $\Omega$ such that $F=f^{D_1}$ on $D_1$. The analytic continuation
    operator from $D_1$ to $D_2$ is then defined by restricting $F$ to
    $D_2$:
    \begin{equation*}
        \aco{D_1}{D_2}[f^{D_1}](z) := F(z), \quad z \in D_2.
    \end{equation*}
    We also denote the restriction over $D_2$ by $f^{D_2}$.
\end{definition}

\begin{remark}
    The operator $\aco{D_1}{D_2}$ is defined only when such an analytic
    continuation exists. The sets $D_1$ and $D_2$ need not overlap, and
    uniqueness follows from the identity theorem on the connected open
    set $\Omega$. For an analytic pair $(f, D)$, we write $f^{D}(z)$
    when we wish to emphasize the domain of analyticity; otherwise we
    simply write $f(z)$.
\end{remark}

To compute the limit of $b^h_{\balpha}$ as $h\rightarrow 0^+$ for a
given $\balpha \in \bbN^d$, we introduce two sets in the complex plane
$\bbC$. For brevity, set
\begin{equation*}
    s_{\balpha}:=d+2\onenorm{\balpha}.
\end{equation*}
We define
\begin{align*}
    D_1 &= \left\{ s \in \bbC \, : \, 0 < \rep(s) < s_{\balpha} \right\}, \\
    D_2 &= \left\{ s \in \bbC \, : \, s_{\balpha} \leq \rep(s) < s_{\balpha} + 1 \right\} \setminus \{s_{\balpha}\}.
\end{align*}
Their union
\begin{equation*}
    D_1\cup D_2
    = \left\{s\in\bbC:0<\rep(s)<s_{\balpha}+1\right\}
      \setminus \{s_{\balpha}\}
\end{equation*}
is the connected open set through which the analytic continuation is
performed.

By \cref{lem:error_order_rhs}, the limit of $b^h_{\balpha}$ as $h\to0^+$
exists. Since this limit is unique, it can be identified along any
sequence $h\to0^+$. We choose the admissible mesh sizes $h=a/N$ and let
$N\to\infty$, which gives
\begin{equation} \label{eq:def_b_alpha}
    b_{\balpha}^{D_1}(s) = \lim_{N\rightarrow \infty} 
    \left( \int_{\bbR^d} 
        g(\bx \frac{a}{N}) \frac{\bx^{2\balpha}}{|\bx|^s} \rd \bx 
        - \sum_{\infnorm{\bi}=1}^{\infty} g(\bi \frac{a}{N}) \frac{\bi^{2\balpha}}{|\bi|^s} 
    \right), 
    \quad s \in D_1.
\end{equation}
To examine this limit, since $\supp(g)\subset[-a,a]^d$, we introduce the
partial expressions 
\begin{equation*}
    u_N^{D_1}(s) := \int_{\bbR^d} g(\bx \frac{a}{N}) \frac{\bx^{2\balpha}}{|\bx|^s} \rd \bx, \quad 
    v_N^{D_1}(s) := \sum_{\infnorm{\bi}=1}^{N} g(\bi \frac{a}{N}) \frac{\bi^{2\balpha}}{|\bi|^s}.
\end{equation*}
A key observation is that, for any fixed $N$, the integral
$u_N^{D_1}(s)$ converges only for $\rep(s) < d + 2\onenorm{\balpha}$,
whereas the summation $v_N^{D_1}(s)$ is well-defined for all $s\in D_1
\cup D_2$.

\begin{lemma} \label{lem:holom_bs_D1}
    The function
    \begin{equation*}
        b_{\balpha}^{D_1}(s) = \lim_{N\rightarrow \infty} \left( \left[u_N^{D_1} - v_N^{D_1}\right](s) \right)    
    \end{equation*}
    is analytic on $D_1$.
\end{lemma}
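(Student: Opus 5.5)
The plan is to show that the functions $F_N(s):=[u_N^{D_1}-v_N^{D_1}](s)$ are analytic on $D_1$ and converge to $b_{\balpha}^{D_1}(s)$ locally uniformly on $D_1$. Weierstrass' theorem (a locally uniform limit of holomorphic functions is holomorphic) then gives the claim. Pointwise existence of the limit is already guaranteed by \cref{lem:error_order_rhs} with $h=a/N$. So the work consists of (i) analyticity of each $F_N$ and (ii) upgrading the pointwise convergence to locally uniform convergence.

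\textbf{Step 1: analyticity of each $F_N$.} The function $v_N^{D_1}(s)$ is a finite sum of entire functions $s\mapsto g(\bi a/N)\bi^{2\balpha}\re^{-s\log|\bi|}$, so it is entire. For $u_N^{D_1}$, the integrand is entire in $s$ for each $\bx\neq 0$ and supported in $|\bx|_\infty\le N$. On a compact $K\subset D_1$ with $\sigma_0\le\rep(s)\le\sigma_1<s_{\balpha}$, it is dominated by $\|g\|_\infty|\bx|^{2\onenorm{\balpha}}\max(|\bx|^{-\sigma_0},|\bx|^{-\sigma_1})\mathbf{1}_{|\bx|_\infty\le N}$. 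This bound is integrable because $\sigma_1<d+2\onenorm{\balpha}$ makes the origin harmless. Morera's theorem combined with Fubini (or differentiation under the integral) then shows $u_N^{D_1}$ is analytic on $D_1$.

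\textbf{Step 2: locally uniform convergence.} This is the main obstacle. \cref{lem:error_order_rhs} gives an $O(h^{2p+2})$ bound for each fixed $s$, but I need its constant to be uniform for $s$ in compact subsets of $D_1$. I would reopen the argument behind \cref{lem:error_order_rhs}, which is presumably a Poisson summation argument: the difference is $-\sum_{\bk\neq0}\widehat{(g(\cdot h)\,|\cdot|^{-s}\cdot^{2\balpha})}(\bk)$ plus the origin correction. The singular part is handled by the homogeneous distributional Fourier transform of $\bx^{2\balpha}|\bx|^{-s}$, whose constant is a Gamma-function ratio that is analytic in $s$. The remainder is controlled by integration by parts, with constants depending continuously on $\rep(s)$ and polynomially on $|\mathrm{Im}(s)|$. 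All of these are bounded on compact $K$, so the estimate is uniform in $s\in K$.

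\textbf{Fallback for Step 2.} If reproducing that uniformity is messy, I would use Vitali/Montel instead. It suffices to show that $\{F_N\}$ is locally uniformly bounded on $D_1$, since pointwise convergence on $D_1$ is already known. To get the bound, I would split $F_N=(u_N-v_N)$ into a near-origin region and the rest. For the non-origin cells, compare $\int_{\text{cell}}$ with the point value using the Euler--Maclaurin/midpoint error $\lesssim |\bi|^{2\onenorm{\balpha}-\rep(s)-2}$ times $|s|^2$-type factors. Summed over $|\bi|\le N$, this is bounded by a constant when $\rep(s)>d+2\onenorm{\balpha}-2$. To remove that restriction, I would first subtract the explicit $h$-expansion of $u_N$: by homogeneity, $u_N(s)=N^{s_{\balpha}-s}\int g(\by a)\by^{2\balpha}|\by|^{-s}\rd\by$. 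The troublesome growth terms then cancel between $u_N$ and $v_N$ exactly as in \cref{lem:error_order_rhs}, and the remainder bounds are uniform on compacts. Either way, the Weierstrass theorem concludes that $b_{\balpha}^{D_1}$ is analytic on $D_1$.
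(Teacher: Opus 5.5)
Your proof is correct, but it establishes holomorphy differently from the paper. The paper does not pass holomorphy through the approximants $F_N=u_N^{D_1}-v_N^{D_1}$. It takes the closed form of the limit produced in the proof of \cref{lem:error_order_rhs},
\[
b_{\balpha}(s)=\int_{|\bx|\le 1}\frac{\bx^{2\balpha}}{|\bx|^s}(1-\eta(\bx))\,\rd\bx-\sum_{\bk\neq 0}\frac{W(\bk;s)}{(2\pi k_j)^q},
\]
and checks each piece separately. The annulus part is handled by \cref{lem:holomorphic}. The ball $|\bx|\le 1/2$ is handled by the explicit factor $2^{s-s_{\balpha}}/(s_{\balpha}-s)$ times a sphere moment. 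The lattice series is handled by showing each $W(\bk;\cdot)$ is holomorphic with a bound uniform in $\bk$, then applying the Weierstrass M-test.

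Your route also works: each $F_N$ is holomorphic (your Step 1 is fine), the $O(N^{-(2p+2)})$ estimate is made uniform on compacts, and Weierstrass finishes. The uniformity is genuinely available. In the paper's proofs of \cref{lem:error_order_rhs} and \cref{lem:estimate_fourier_fh}, every constant comes from two sources: bounds on the polynomials $P_{q,\balpha,j}$, whose coefficients are polynomial in $s$, and integrals of powers like $|\bx|^{-\rep(s)}$. Both are uniform when $\rep(s)$ ranges over a compact subinterval of $(0,s_{\balpha})$.

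Note that the actual argument removes the origin with the cutoff $\eta$ and integrates by parts $q$ times. So you do not need the distributional Fourier transform of $\bx^{2\balpha}|\bx|^{-s}$, nor Poisson summation for a singular function.

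The trade-off is this. You need uniformity of the remainder terms as well, namely the Taylor remainder in ${\rm I}$ and $R_h(\bk)$. The paper only needs uniform bounds on the leading coefficients $W(\bk;s)$, at the cost of first identifying the limit in closed form.

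Your fallback is unnecessary. As sketched, the cell-by-cell midpoint bound only gives local boundedness for $\rep(s)>s_{\balpha}-2$, which is exactly the region where the paper later uses Vitali--Porter in \cref{lem:bs_regular_part}. The proposed repair for the rest of $D_1$ essentially reduces to your main route.
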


\begin{lemma} \label{lem:exist_bs_D2}
    The function $b_{\balpha}^{D_1}(s)$ possesses an analytic
    continuation over $D_1 \cup D_2$ and we have
    \begin{equation*}
        b_{\balpha}^{D_2}(s) := \aco{D_1}{D_2}\left[b_{\balpha}^{D_1}\right](s) 
        = \aco{D_1}{D_2} \left[ \lim_{N\rightarrow \infty} \left( u_N^{D_1} - v_N^{D_1} \right) \right] (s) 
        = \lim_{N\rightarrow \infty} \left( \aco{D_1}{D_2} \left[u_N^{D_1} - v_N^{D_1}\right](s) \right).
    \end{equation*}
\end{lemma}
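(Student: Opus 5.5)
We will continue each $u_N$ explicitly, check that the continuations converge locally uniformly as $N\to\infty$, and then transfer analyticity to the limit by Weierstrass's theorem. The identity theorem on the connected open set $D_1\cup D_2$ will then identify the limit with the continuation of $b_{\balpha}^{D_1}$.

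\textbf{Continuation of $u_N$ and $v_N$.} For fixed $N$, $v_N^{D_1}$ is a finite sum of entire functions of $s$, so its continuation is the same expression on all of $\bbC$. For $u_N^{D_1}$ we use polar coordinates $\bx=r\theta$. Since $g$ is radial,
\begin{equation*}
u_N^{D_1}(s)=c_{\balpha}\int_0^\infty g\Big(\frac{ra}{N}\Big) r^{s_{\balpha}-s-1}\,\rd r ,
\qquad c_{\balpha}=\int_{\bbS^{d-1}}\theta^{2\balpha}\,\rd\theta .
\end{equation*}
Substituting $t=ra/N$ gives
\begin{equation*}
u_N^{D_1}(s)=c_{\balpha}\Big(\frac{N}{a}\Big)^{s_{\balpha}-s} M(s_{\balpha}-s),
\qquad M(z)=\int_0^\infty g(t)t^{z-1}\,\rd t ,
\end{equation*}
which is a Mellin transform valid for $\rep z>0$.

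Integrating by parts once, $M(z)=-\frac1z\int_0^\infty g'(t)t^{z}\,\rd t$. The right-hand side is meromorphic on $\rep z>-1$, because $g'(t)=O(t^{2p+1})$ near $0$; in fact on a much larger half-plane. Its only pole there is at $z=0$, and the residue comes from $g(0)=1$. Hence $u_N$ continues to $D_1\cup D_2$ with a single simple pole at $s=s_{\balpha}$. Explicitly,
\begin{equation*}
\aco{D_1}{D_2}[u_N](s)=-\frac{c_{\balpha}(N/a)^{s_{\balpha}-s}}{s_{\balpha}-s}\int_0^\infty g'(t)t^{s_{\balpha}-s}\,\rd t .
\end{equation*}

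\textbf{Convergence of the continued differences.} We next show that $F_N:=\aco{D_1}{D_2}[u_N-v_N]$ converges locally uniformly on $D_1\cup D_2$. On $D_1$ this follows from \cref{lem:error_order_rhs}, whose error term is locally uniform in $s$: the constants depend on $s$ only through bounds on $|\bx|^{-s}$ and its derivatives, and these bounds are locally uniform. On $D_2$ we use the Euler--Maclaurin/Poisson structure behind \cref{lem:error_order_rhs}. Write $f_s(\bx)=g(\bx h)\bx^{2\balpha}|\bx|^{-s}$ with $h=a/N$. Split $f_s$ with a fixed smooth cutoff $\chi$ near the origin and its complement $1-\chi$.
\begin{itemize}
\item The part $(1-\chi)f_s$ is smooth. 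Its integral minus its lattice sum equals, by Poisson summation, $\sum_{\bk\ne0}\widehat{(1-\chi)f_s}(\bk)$. This can be bounded by integrating by parts, which gives $O(h^{K})$ after rescaling. The bound uses only smoothness and homogeneity of $|\bx|^{-s}$ away from $0$, so it is valid for every $\rep s$. The leftover integral $\int\chi f_s$ is handled by the same Mellin continuation as above.
\item The part near the origin is an $N$-independent quantity plus terms built from Taylor coefficients of $g$ at $0$. These coefficients vanish through order $2p+1$.
\end{itemize}
Hence $F_N(s)=b^{*}(s)+O(h^{2p+2-\epsilon})$ uniformly on compact subsets of $D_1\cup D_2$.

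An alternative is more direct. We prove that $F_N$ is locally bounded uniformly in $N$ and converges on $D_1$. Vitali's theorem then gives locally uniform convergence on the connected set $D_1\cup D_2$. This route needs only a uniform bound on $D_2$. That bound follows from the explicit formula for $\aco{D_1}{D_2}[u_N]$ together with an Euler--Maclaurin comparison of $v_N$ against the truncated integral $\int_{|\bx|\ge1}$, which is itself analytic in $s$.

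\textbf{Conclusion.} Each $F_N$ is analytic on $D_1\cup D_2$, so by Weierstrass the limit $F$ is analytic there. On $D_1$, $F$ agrees with $b_{\balpha}^{D_1}$, which is analytic by \cref{lem:holom_bs_D1}. The identity theorem then shows $F$ is the unique continuation, which gives the stated interchange of limit and continuation.

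The main obstacle is the second step: obtaining the uniform-in-$N$ bound for $F_N$ on $D_2$, where the integral and the sum each diverge as $N\to\infty$ and only their difference is controlled. We will try the Vitali route first. The growing factor $(N/a)^{s_{\balpha}-s}$ appears both in the continued $u_N$ and in the divergent part of $v_N$, and the main work is to verify that the two cancel exactly.
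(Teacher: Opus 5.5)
Your Mellin continuation of $u_N$ and the closing identity-theorem step are fine. The one nontrivial point, however, is control of $\aco{D_1}{D_2}[u_N-v_N]$ uniformly in $N$ up to and across the line $\rep(s)=s_{\balpha}$, and neither of your routes establishes it.

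In the Vitali route, your sketch (continue the near-origin integral explicitly, compare the far integral with the lattice sum) is exactly the paper's decomposition. But the comparison estimate is the entire content of the lemma, and you only announce it. It is also mislocated in three ways:
\begin{itemize}
\item Vitali needs local boundedness on an \emph{open} strip around $\rep(s)=s_{\balpha}$. A bound on $D_2$ plus convergence on $D_1$ does not by itself supply this.
\item No exact cancellation of $(N/a)^{s_{\balpha}-s}$ is involved; that factor is bounded on $D_2$.
\item The real difficulty is on the $D_1$ side of the line, where $u_N$ and $v_N$ both grow like $N^{s_{\balpha}-\rep(s)}$, and on the line itself, where the termwise bound for $v_N$ is $O(\log N)$.
\end{itemize}

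Route A contains a false step. The Poisson remainder $\sum_{\bk\neq0}\widehat{(1-\chi)f_s}(\bk)$ is not $O(h^K)$. The factor $\bx^{2\balpha}|\bx|^{-s}$ varies on the unit scale, so integration by parts gives decay in $\bk$ but no power of $h$. By \cref{lem:estimate_fourier_fh} (with $\eta=1-\chi$), the remainder equals $g(0)\sum_{\bk\neq0}W(\bk;s)/(2\pi k_j)^q+O(h^{2p+2})$, which is an $O(1)$ part of $b_{\balpha}$. Were it $O(h^K)$, your $b^*$ would be the continuation of $g(0)\int\chi(\bx)\bx^{2\balpha}|\bx|^{-s}\,\rd\bx$. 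That quantity depends on the arbitrary cutoff $\chi$, whereas on $D_3$ the limit must be $-Z_{2\balpha}(s)$.

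The paper fills the gap with \cref{lem:bs_regular_part}. It writes $u_N-v_N=I_N+J_N$:
\begin{itemize}
\item $J_N$ is the integral over $\{\infnorm{\bx}\le\frac12\}$, continued explicitly by \cref{lem:bs_singular_part} and bounded uniformly in $N$ away from $s_{\balpha}$.
\item $I_N$ is the integral over $\{\frac12\le\infnorm{\bx}\le N+\frac12\}$ minus the lattice sum, an entire function of $s$.
\end{itemize}
Split $I_N$ into unit cells $\bi+[-\frac12,\frac12]^d$ and Taylor-expand to second order at $\bi$; the linear term integrates to zero by symmetry. The shell $\infnorm{\bi}=n$ then contributes $O(n^{d+2\onenorm{\balpha}-\rep(s)-3})$, uniformly in $N\ge n$. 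Hence $\{I_N\}$ is locally bounded on $\{\rep(s)>s_{\balpha}-2\}$, and Vitali--Porter together with the trivial convergence on $\{\rep(s)>s_{\balpha}\}$ finishes the argument.

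Alternatively, Route A can be repaired, and it then becomes shorter than the paper's argument. The proof of \cref{lem:estimate_fourier_fh} never uses $\rep(s)<s_{\balpha}$; its bounds hold verbatim, locally uniformly, for all $\rep(s)>0$. So term ${\rm II}$ in the proof of \cref{lem:error_order_rhs}, which is entire in $s$ for fixed $N$, converges locally uniformly on $D_1\cup D_2$ to the analytic function $-g(0)\sum_{\bk\neq0}W(\bk;s)/(2\pi k_j)^q$. Term ${\rm I}$ continues explicitly after subtracting $g(0)$ in polar coordinates, with an $O(h^{2p+2})$ error for $\rep(s)<s_{\balpha}+2p+2$. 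This proves the lemma directly, with a rate and without any Vitali argument.
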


Moreover, we introduce another domain $D_3 \subseteq D_2$ that removes
the line $\{s \in \bbC: \rep(s) = s_{\balpha}\}$, defined as 
\begin{equation*}
    D_3 = \left\{ s \in \bbC \, : \, s_{\balpha} < \rep(s) < s_{\balpha} + 1 \right\}.
\end{equation*} 
The purpose of introducing $D_3$ is to avoid the line containing the
pole, so that we have a simple expression of $b^{D_3}_{\balpha}(s)$ over
$D_3$ (such an expression is only well-defined on $D_3$).

\begin{lemma} \label{lem:expression_bs_D3}
    For $s \in D_3$, there is
    \begin{equation*}
        b_{\balpha}^{D_3}(s) 
        = \lim_{N\rightarrow\infty} \left( \aco{D_1}{D_3}[u_N^{D_1}](s) - \aco{D_1}{D_3}[v_N^{D_1}](s) \right) 
        = - \lim_{N\rightarrow\infty} \aco{D_1}{D_3}[v_N^{D_1}](s)
        = - \sum_{\infnorm{\bi}=1}^{\infty} \frac{\bi^{2\balpha}}{|\bi|^s}.
    \end{equation*}
\end{lemma}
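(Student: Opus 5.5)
The first equality is \cref{lem:exist_bs_D2} restricted to $D_3\subset D_2$. So the work is to show, for fixed $N$ and $s\in D_3$, that the analytically continued pieces are
\begin{equation*}
\aco{D_1}{D_3}[u_N^{D_1}](s)=0,\qquad \aco{D_1}{D_3}[v_N^{D_1}](s)=v_N(s)=\sum_{\infnorm{\bi}=1}^{N} g(\bi \tfrac{a}{N})\frac{\bi^{2\balpha}}{|\bi|^s},
\end{equation*}
and then to pass to the limit $N\to\infty$. The claim about $v_N$ is immediate: $v_N$ is a finite sum of entire functions of $s$, so it is entire, and its continuation is the same formula.

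For $u_N$ I would compute the analytic continuation explicitly using polar coordinates. Write $\bx=r\bxi$ with $\bxi\in\bbS^{d-1}$ and set $M_{\balpha}=\int_{\bbS^{d-1}}\bxi^{2\balpha}\,\rd\bxi$. Since $g$ is radial, on $D_1$ we get
\begin{equation*}
u_N^{D_1}(s)=M_{\balpha}\int_0^\infty g(r a/N)\, r^{s_{\balpha}-1-s}\,\rd r .
\end{equation*}
Put $G(r)=g(ra/N)$, which satisfies $G(0)=1$, $G'(r)=0$ near $0$, and has compact support. Integrating by parts once gives
\begin{equation*}
u_N^{D_1}(s)=-\frac{M_{\balpha}}{s_{\balpha}-s}\int_0^\infty G'(r)\,r^{s_{\balpha}-s}\,\rd r .
\end{equation*}
On $D_1$ the boundary terms vanish: at $r=0$ because $\rep(s_{\balpha}-s)>0$, and at $\infty$ by compact support. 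Since $G'$ vanishes near $0$ and has compact support, the right-hand side is analytic on all of $\bbC\setminus\{s_{\balpha}\}$. It is therefore the continuation to $D_3$.

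Next I would show this continuation is identically zero on $D_3$. For $\rep(s)>s_{\balpha}$, integrate by parts in reverse on $[\epsilon,\infty)$:
\begin{equation*}
-\frac{1}{s_{\balpha}-s}\int_\epsilon^\infty G' r^{s_{\balpha}-s}\,\rd r=\frac{G(\epsilon)\epsilon^{s_{\balpha}-s}}{s_{\balpha}-s}+\int_\epsilon^\infty G\, r^{s_{\balpha}-1-s}\,\rd r .
\end{equation*}
Because $G\equiv1$ near $0$, for small $\epsilon$ we have
\begin{equation*}
\int_\epsilon^{c} r^{s_{\balpha}-1-s}\,\rd r=\frac{c^{s_{\balpha}-s}-\epsilon^{s_{\balpha}-s}}{s_{\balpha}-s},
\end{equation*}
so the $\epsilon^{s_{\balpha}-s}$ terms cancel. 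A cleaner route avoids the $\epsilon$-limit: $\int_0^\infty G'(r)r^{t}\,\rd r$ with $t=s_{\balpha}-s$ is a Mellin transform of $G'$. For $\rep t<0$ it equals $-t\int_0^\infty (G(r)-1)r^{t-1}\,\rd r$ (the integrand is integrable since $G-1$ vanishes near $0$ and $G-1\to -1$ at $\infty$), and this expression also extends to all $t\neq 0$. Either way, the conclusion to verify is that the continued $u_N$ equals $M_{\balpha}\int_0^\infty (G(r)-1)r^{s_{\balpha}-1-s}\,\rd r$ on $D_3$. That integral is not zero; with $R=N/a$ it equals $-M_{\balpha}\,R^{s_{\balpha}-s}C_g(s)$ for a constant $C_g(s)$, so it scales like $N^{s_{\balpha}-\rep s}\to 0$ as $N\to\infty$ because $\rep s>s_{\balpha}$. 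Hence $\lim_N \aco{D_1}{D_3}[u_N^{D_1}](s)=0$, which is the second equality. It is the limit, not each term, that vanishes, consistent with how the statement is written.

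Finally, $v_N(s)\to\sum_{\infnorm{\bi}\ge1}\bi^{2\balpha}|\bi|^{-s}$ by dominated convergence. The terms satisfy $|g|\le C$ and $g(\bi a/N)\to 1$ pointwise, and they are dominated by $|\bi|^{2|\balpha|_1-\rep s}$, which is summable over $\bbZ^d$ exactly when $\rep s>s_{\balpha}$, i.e.\ on $D_3$. The main obstacle is the $u_N$ step: getting the continued formula right and justifying its scaling $N^{s_{\balpha}-s}$, rather than mistakenly claiming it vanishes for each $N$.
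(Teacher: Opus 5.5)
Your proof is correct and is essentially the paper's argument. The paper also rests on the homogeneity
$u_N^{D_1}(s)=(N/a)^{s_{\balpha}-s}\int_{\infnorm{\by}\le a} g(\by)\by^{2\balpha}|\by|^{-s}\,\rd\by$. The only differences are that it rescales first and cites \cref{lem:bs_singular_part} for the continuation of the $N$-independent integral, whereas you compute that continuation explicitly as a Mellin transform and rescale afterwards; the treatment of $v_N$ (finite sum, then dominated convergence) is identical.

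One small correction: the hypotheses on $g$ give only $G(r)-1=O(r^{2p+2})$ and $G'(r)=O(r^{2p+1})$ near $0$, not $G\equiv 1$ there. So your integrated-by-parts formulas are analytic only on $\{\rep(s)<s_{\balpha}+2p+2\}\setminus\{s_{\balpha}\}$ rather than on $\bbC\setminus\{s_{\balpha}\}$. That region still contains $D_1\cup D_3$, so the argument goes through unchanged.
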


\begin{corollary}
    For $s \in D_1$, there is
    \begin{equation*}
        b_{\balpha}^{D_1}(s) = \aco{D_3}{D_1}[b_{\balpha}^{D_3}](s) 
        = \aco{D_3}{D_1} \left[ -\sum_{\infnorm{\bi}=1}^{\infty} \frac{\bi^{2\balpha}}{|\bi|^s} \right](s).
    \end{equation*}
\end{corollary}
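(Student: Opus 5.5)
The corollary follows from the uniqueness of analytic continuation on a connected open set, combined with \cref{lem:holom_bs_D1}, \cref{lem:exist_bs_D2} and \cref{lem:expression_bs_D3}. The plan is to exhibit one analytic function $F$ on the connected open set $\Omega := D_1\cup D_2 = \{0<\rep(s)<s_{\balpha}+1\}\setminus\{s_{\balpha}\}$. This $F$ should agree with $b_{\balpha}^{D_1}$ on $D_1$ and with $-\sum_{\infnorm{\bi}\ge 1}\bi^{2\balpha}|\bi|^{-s}$ on $D_3$. Once we have it, both identities in the corollary follow from the definition of the operator $\aco{D_3}{D_1}$.

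\textbf{Step 1 (the global function).} By \cref{lem:holom_bs_D1}, $b_{\balpha}^{D_1}$ is analytic on $D_1$. By \cref{lem:exist_bs_D2}, it has an analytic continuation $F$ to $\Omega$, and $F|_{D_2}=b_{\balpha}^{D_2}$. We also need $\Omega$ to be connected and open. It is an open vertical strip with a single point removed, so this is immediate.

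\textbf{Step 2 (identification on $D_3$).} Since $D_3\subseteq D_2$, the restriction $F|_{D_3}$ is exactly $b_{\balpha}^{D_3}=\aco{D_1}{D_3}[b_{\balpha}^{D_1}]$. \Cref{lem:expression_bs_D3} identifies this as $-\sum_{\infnorm{\bi}=1}^{\infty}\bi^{2\balpha}|\bi|^{-s}$. We should check that this series converges absolutely and locally uniformly for $\rep(s)>s_{\balpha}$. The bound $|\bi^{2\balpha}|\le|\bi|^{2\onenorm{\balpha}}$ gives termwise domination by $|\bi|^{2\onenorm{\balpha}-\rep(s)}$, which is summable over $\bbZ^d\setminus\{0\}$ exactly when $\rep(s)>d+2\onenorm{\balpha}$. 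Hence the series defines an analytic function on the nonempty open set $D_3$.

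\textbf{Step 3 (continuation back to $D_1$).} Now apply the definition of $\aco{D_3}{D_1}$ with the same $\Omega$. The function $F$ is analytic on $\Omega\supset D_1\cup D_3$ and coincides with $b_{\balpha}^{D_3}$ on $D_3$. By the identity theorem, any analytic function on $\Omega$ that agrees with $b_{\balpha}^{D_3}$ on the open set $D_3$ must equal $F$. Therefore $\aco{D_3}{D_1}[b_{\balpha}^{D_3}]=F|_{D_1}=b_{\balpha}^{D_1}$. Substituting the series expression from Step 2 gives the second equality.

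The only point needing care is well-definedness: the continuation from $D_3$ must be taken through the same connected domain $\Omega$. Here that is automatic, because every relevant set lies in the punctured strip, so the result does not depend on the path around the pole $s_{\balpha}$. The remaining analytic content is already supplied by the preceding lemmas.
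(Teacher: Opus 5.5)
Your proposal is correct and follows the route the paper intends. The paper gives no separate proof, treating the corollary as immediate from \cref{lem:exist_bs_D2} and \cref{lem:expression_bs_D3} plus the identity theorem on the connected punctured strip $D_1\cup D_2$, which is exactly your Steps 1--3 (your convergence check on $D_3$ and your remark about working within a single domain $\Omega$ are harmless additions).
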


For ease of understanding, we illustrate the above analytic-continuation
procedure at a point $s_0\in D_1\cap\bbR$ in
\cref{fig:analytic_continuation_balpha}.

\begin{figure}[htbp]
    \centering
    \begin{tikzpicture}[x=1.18cm,y=1.02cm,>=Stealth, font=\small]
        \def\xmin{-1.4}
        \def\xmax{ 6.4}
        \def\ymin{-2}
        \def\ymax{ 2}

        \def\xPole{2.5}

        \def\bgPadX{1.0}
        \def\bgPadY{0.7}

        \def\xL{1.25}     
        \def\xR{5.15}     
        \def\yB{-1.05}    
        \def\yT{ 1.10}    

        \colorlet{axiscol}{black!70}
        \colorlet{flowcol}{blue!70!black}
        \colorlet{bgDOne}{blue!10}
        \colorlet{bgDTwo}{green!10}
        \colorlet{sepcol}{orange!85!red}

        \colorlet{limitcol}{black}  
        \colorlet{acocol}{black}       
        \colorlet{polecol}{red!80!black}       

        \tikzset{
            axis/.style={axiscol, line width=0.55pt},
            sepline/.style={sepcol, densely dashed, line width=0.9pt},
            flow/.style={flowcol, line width=1.25pt},
            lab/.style={font=\scriptsize, inner sep=1.2pt},
            limlab/.style={lab, text=limitcol, font=\scriptsize\bfseries},
            acolab/.style={lab, text=acocol, font=\scriptsize\bfseries},
        }

        \begin{scope}
            \clip ({\xmin-\bgPadX},{\ymin-\bgPadY}) rectangle ({\xmax+\bgPadX},{\ymax+\bgPadY});
            \fill[bgDOne, opacity=0.35] ({\xmin-\bgPadX},{\ymin-\bgPadY}) rectangle (\xPole,{\ymax+\bgPadY});
            \fill[bgDTwo, opacity=0.28] (\xPole,{\ymin-\bgPadY}) rectangle ({\xmax+\bgPadX},{\ymax+\bgPadY});
        \end{scope}

        \draw[axis, ->] (\xmin,0) -- (\xmax,0) node[below right, lab] {$\rep(s)$};
        \draw[axis, ->] (0,\ymin) -- (0,\ymax) node[above left, lab] {$\text{Im}(s)$};

        \node[lab, anchor=north east] at (0,0) {$0$};

        \draw[sepline] (\xPole,\ymin) -- (\xPole,\ymax+0.3);
        \node[lab, below=4pt] at (\xPole,\ymin) {$\rep(s)=d+2\onenorm{\balpha}$};

        \draw[polecol, line width=0.9pt] (\xPole,0) ++(-0.10,-0.10) -- ++(0.20,0.20);
        \draw[polecol, line width=0.9pt] (\xPole,0) ++(-0.10, 0.10) -- ++(0.20,-0.20);

        \node[lab, anchor=north] at ({(\xmin+\xPole)/2-0.1},\ymax) {$D_1$};
        \node[lab, anchor=north] at ({(\xPole+\xmax)/2+1.2},\ymax) {$D_2$};

        \node[lab, anchor=south] at (\xL,\yT) {$b_{\balpha}^{D_1}(s_0)$};
        \node[lab, anchor=north] at (\xL,\yB) {$b_{\balpha}^{D_1}(s_0;N)$};

        \node[lab, anchor=south] at (\xR,\yT) {$b_{\balpha}^{D_2}(s)$};
        \node[lab, anchor=north] at (\xR,\yB) {$b_{\balpha}^{D_2}(s;N)$};
        \draw[flow, ->] (\xL,\yB+0.12) -- (\xL,\yT-0.12);
        \node[limlab, anchor=east] at (\xL-0.18,{(\yB+\yT)/2+0.3}) {$N\to\infty$};

        \draw[flow, ->] (\xR,\yB+0.12) -- (\xR,\yT-0.12);
        \node[limlab, anchor=west] at (\xR+0.18,{(\yB+\yT)/2-0.3}) {$N\to\infty$};

        \draw[flow, ->] (\xL+0.7,\yB-0.2) -- (\xR-0.7,\yB-0.2);
        \node[acolab, below=2pt] at ({(\xL+\xR)/2},\yB-0.2) {$\aco{D_1}{D_2}$};

        \draw[flow, ->] (\xR-0.7,\yT+0.2) -- (\xL+0.7,\yT+0.2);
        \node[acolab, above=2pt] at ({(\xL+\xR)/2},\yT+0.2) {$\aco{D_2}{D_1}$};

    \end{tikzpicture}
    \caption{A schematic of computing the limit using
    analytic-continuation. For $b_{\balpha}^{D_1}(s_0)$ defined in
    \eqref{eq:def_b_alpha}, start from $b_{\balpha}^{D_1}(s_0;N)$,
    continue to $D_2$ to obtain $b_{\balpha}^{D_2}(s;N)$, take
    $N\to\infty$ in $D_2$, then continue back to recover
    $b_{\balpha}^{D_1}(s_0)$.}
    \label{fig:analytic_continuation_balpha}
\end{figure}

In order to compute the corrected weights, we will derive an explicit
expression of $b_{\balpha}^{D_1}(s)$, which is a generalized zeta
function (up to sign). Define
\begin{equation*}
    Z_{2\balpha}(s) := \sum_{\bn \in \bbZ^{d} \setminus \{\bzero\}} \frac{\bn^{2\balpha}}{|\bn|^s}, 
    \quad \rep(s) > d+2\onenorm{\balpha}. 
\end{equation*}
In one dimensional scenario, it happens to be
$2\zeta(s-2\onenorm{\balpha})$, where $\zeta(s)$ is the well-known
Riemann zeta function. In higher dimension, the case $\balpha=\bzero$
reduces to the Epstein zeta function
\cite{buchheit2024computation,epstein1903theorie,epstein1906theorie},
which has a meromorphic continuation to the whole complex plane $\bbC$
with the only simple pole $s=d$. Following the idea of Riemann, we
derive a functional equation for $Z_{2\balpha}(s)$ that yields its
meromorphic continuation and a rapidly convergent series representation
for numerical evaluation.

\begin{theorem} \label{thm:zeta_function} 
    For generalized zeta function defined as 
    \begin{equation*}
        Z_{2\balpha}(s) := \sum_{\bn \in \bbZ^{d} \setminus \{\bzero\}} \frac{\bn^{2\balpha}}{|\bn|^s}, 
        \quad \rep(s) > d+2\onenorm{\balpha},
    \end{equation*}    
    it has a meromorphic continuation to the whole complex plane $\bbC$
    with the only simple pole at $s=d+2\onenorm{\balpha}$, and satisfies
    the functional equation
    \begin{equation} \label{eq:zeta_func_equation}
        \begin{split}
            \frac{\Gamma(s/2)}{\pi^{s/2}} Z_{2\balpha}(s) 
            &= \left(-\frac{1}{4\pi}\right)^{\onenorm{\balpha}} \frac{2H_{2\balpha}(0)}{s - d - 2\onenorm{\balpha}} 
            - \frac{2\delta_{0, \balpha}}{s} 
            + \sum_{\bn\in \bbZ^d \setminus \{\bzero\}} \bn^{2\balpha} \int_{1}^{\infty} \re^{-|\bn|^2\pi t} \, t^{\frac{s}{2} -1} \rd t \\
            &\quad + \left(-\frac{1}{4\pi}\right)^{\onenorm{\balpha}} \sum_{\bk \in \bbZ^d \setminus \{\bzero\}}
            \int_{1}^{\infty} \re^{-|\bk|^2 \pi t} H_{2\balpha}(\sqrt{\pi t} \, \bk) \, t^{\frac{d + 2 \onenorm{\balpha} - s}{2} - 1} \rd t,        
        \end{split}
    \end{equation}
    where $H_{2\balpha}(\bx)$ is the multivariate Hermite polynomial in
    order $2\balpha$ defined in \cref{def:hermite_poly}.
\end{theorem}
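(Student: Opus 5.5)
Following Riemann's second proof of the functional equation, we start from the Mellin representation. For $\rep(s)>d+2\onenorm{\balpha}$ and $\bn\neq\bzero$,
\begin{equation*}
    \frac{\Gamma(s/2)}{\pi^{s/2}}|\bn|^{-s}=\int_0^\infty \re^{-|\bn|^2\pi t}\,t^{\frac s2-1}\rd t .
\end{equation*}
Multiplying by $\bn^{2\balpha}$ and summing (Tonelli justifies the interchange in this half-plane) gives
\begin{equation*}
    \frac{\Gamma(s/2)}{\pi^{s/2}}Z_{2\balpha}(s)=\int_0^\infty \Theta_{\balpha}(t)\,t^{\frac s2-1}\rd t,
    \qquad
    \Theta_{\balpha}(t):=\sum_{\bn\neq\bzero}\bn^{2\balpha}\re^{-\pi|\bn|^2t}.
\end{equation*}
We split the integral at $t=1$. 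The piece over $[1,\infty)$ is the third term of \eqref{eq:zeta_func_equation}. It is entire in $s$, because $\Theta_{\balpha}(t)$ decays like $\re^{-\pi t}$ and the series converges uniformly on compact sets of $s$.

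For the piece over $(0,1)$ we need a Poisson-summation (theta transformation) identity for the weighted theta series $\sum_{\bn\in\bbZ^d}\bn^{2\balpha}\re^{-\pi|\bn|^2 t}$. The function $f_t(\bx)=\bx^{2\balpha}\re^{-\pi t|\bx|^2}$ factors over coordinates, and its Fourier transform is obtained by differentiating the Gaussian transform in the frequency variable:
\begin{equation*}
    \widehat{f_t}(\bk)=\left(\frac{-1}{4\pi^2}\right)^{\onenorm{\balpha}}\partial_{\bk}^{2\balpha}\bigl(t^{-d/2}\re^{-\pi|\bk|^2/t}\bigr).
\end{equation*}
Writing the derivatives of the Gaussian through Hermite polynomials (using the normalization of \cref{def:hermite_poly}, which I will assume is the physicists' one, $\partial^{\balpha}\re^{-|\bx|^2}=(-1)^{\onenorm{\balpha}}H_{\balpha}(\bx)\re^{-|\bx|^2}$) and rescaling $\bx=\sqrt{\pi/t}\,\bk$ yields
\begin{equation*}
    \widehat{f_t}(\bk)=\left(-\frac{1}{4\pi}\right)^{\onenorm{\balpha}}t^{-\frac d2-\onenorm{\balpha}}H_{2\balpha}\bigl(\sqrt{\pi/t}\,\bk\bigr)\re^{-\pi|\bk|^2/t},
\end{equation*}
up to constant bookkeeping that must be checked carefully. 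Poisson summation then gives
\begin{equation*}
    \Theta_{\balpha}(t)+\delta_{0,\balpha}=\left(-\frac{1}{4\pi}\right)^{\onenorm{\balpha}}t^{-\frac d2-\onenorm{\balpha}}\Bigl[H_{2\balpha}(0)+\sum_{\bk\neq\bzero}H_{2\balpha}\bigl(\sqrt{\pi/t}\,\bk\bigr)\re^{-\pi|\bk|^2/t}\Bigr].
\end{equation*}
The $\delta_{0,\balpha}$ appears because $\bn=\bzero$ contributes only when $\balpha=\bzero$.

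Next we insert this identity into $\int_0^1\Theta_{\balpha}(t)t^{s/2-1}\rd t$ and treat the three resulting terms separately, working first in the half-plane $\rep(s)>d+2\onenorm{\balpha}$.
\begin{itemize}
    \item The constant $H_{2\balpha}(0)$ term integrates to $\left(-\frac1{4\pi}\right)^{\onenorm{\balpha}}\frac{2H_{2\balpha}(0)}{s-d-2\onenorm{\balpha}}$.
    \item The $-\delta_{0,\balpha}$ term integrates to $-\frac{2\delta_{0,\balpha}}{s}$.
    \item In the remaining sum we substitute $t\mapsto 1/t$, which maps $(0,1)$ to $(1,\infty)$ and turns $t^{-\frac d2-\onenorm{\balpha}+\frac s2-1}\rd t$ into $t^{\frac{d+2\onenorm{\balpha}-s}{2}-1}\rd t$ with argument $H_{2\balpha}(\sqrt{\pi t}\,\bk)\re^{-\pi|\bk|^2t}$. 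This is exactly the fourth term.
\end{itemize}
That fourth term is entire, since Gaussian decay dominates the polynomial growth of $H_{2\balpha}$ uniformly for $s$ in compacts. So the right-hand side of \eqref{eq:zeta_func_equation} is meromorphic on $\bbC$ and equals the left-hand side on a half-plane, and the identity theorem extends the equation to all $s$. Dividing by $\Gamma(s/2)\pi^{-s/2}$, whose reciprocal is entire, gives the meromorphic continuation of $Z_{2\balpha}$.

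It remains to locate the poles. The $-2\delta_{0,\balpha}/s$ term is cancelled by the pole of $\Gamma(s/2)$ at $s=0$, so $Z_{2\balpha}$ is regular there. The poles of $\Gamma$ at $s=-2,-4,\dots$ only produce zeros of $Z_{2\balpha}$. Hence the only possible pole is the simple one at $s=d+2\onenorm{\balpha}$, with residue proportional to $H_{2\balpha}(0)$. I expect this to be nonzero because $H_{2m}(0)=(-1)^m(2m)!/m!\neq0$ in each coordinate. If the constants in the definition of $H$ were to differ, only the prefactor would change.

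The main obstacle is the Fourier-transform/Hermite computation, where signs, powers of $t$ and factors of $\pi$ must line up exactly with the normalization in \cref{def:hermite_poly}. I would verify it first in $d=1$ and then use the tensor-product structure to get general $d$.
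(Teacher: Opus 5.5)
Your proposal is correct and follows essentially the same route as the paper: the Mellin representation, a split at $t=1$, Poisson summation with the Fourier transform of $\bx^{2\balpha}$ times a Gaussian written through Hermite polynomials, integration of the constant terms to produce the two pole terms, and cancellation of the $s=0$ pole by $\Gamma(s/2)$; your constant $\left(-\frac{1}{4\pi}\right)^{\onenorm{\balpha}}t^{-\frac d2-\onenorm{\balpha}}$ is right for the physicists' normalization of \cref{def:hermite_poly}. The differences are cosmetic: you apply Poisson summation on $(0,1)$ before substituting $t\mapsto 1/t$, whereas the paper inverts first. You also add the check $H_{2\balpha}(0)=\prod_j(-1)^{\alpha_j}(2\alpha_j)!/\alpha_j!\neq 0$, which the paper leaves implicit when it asserts that the pole at $s=d+2\onenorm{\balpha}$ is actually present.
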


For numerical evaluation, it is useful to rewrite the integral terms in
\eqref{eq:zeta_func_equation} using the upper incomplete Gamma function
\begin{equation*}
    \Gamma(s, x) := \int_x^{\infty} t^{s-1} \re^{-t} \,\rd t,
    \quad \text{for } s\in\bbC, \, x>0.
\end{equation*}
For $s\in\bbC$ and $x>0$, the change of variables $u=tx$ gives
\begin{equation*}
    \int_{1}^{\infty} t^{s - 1} \re^{-tx} \rd t = \frac{1}{x^s} \, \Gamma(s, x).
\end{equation*}
Applying this identity to the first integral series in
\eqref{eq:zeta_func_equation} yields
\begin{equation} \label{eq:zeta_series_n}
    \begin{split}
        \sum_{\infnorm{\bn}=1}^{\infty} \bn^{2\balpha} \int_{1}^{\infty} \re^{-|\bn|^2\pi t} \, t^{\frac{s}{2} -1} \,\rd t 
        &= \sum_{\infnorm{\bn}=1}^{\infty} \bn^{2\balpha} \frac{1}{|\bn|^s \pi^{s/2}} \Gamma(\frac{s}{2}, |\bn|^2 \pi) \\
        &= \frac{1}{\pi^{s/2}} \sum_{\infnorm{\bn}=1}^{\infty} \left(\frac{\bn}{|\bn|}\right)^{2\balpha} 
        \frac{1}{|\bn|^{s-2\onenorm{\balpha}}} \Gamma(\frac{s}{2}, |\bn|^2 \pi).         
    \end{split}
\end{equation}
For the second integral series, we first expand
$H_{2\balpha}(\bx) = \sum_{0 \leq \onenorm{\bbeta} \leq
2\onenorm{\balpha}} c_{\bbeta} \bx^{\bbeta}$ and then apply the same
identity of incomplete Gamma function term by term. This gives
\begin{multline} \label{eq:zeta_series_k}
    \sum_{\infnorm{\bk}=1}^{\infty} \int_{1}^{\infty} \re^{-|\bk|^2 \pi t} H_{2\balpha}(\sqrt{\pi t} \, \bk) \, t^{\frac{d+2\onenorm{\balpha}-s}{2} - 1} \,\rd t \\
    = \pi^{\frac{s-d-2\onenorm{\balpha}}{2}} \sum_{0 \leq \onenorm{\bbeta} \leq 2\onenorm{\balpha}} c_{\bbeta} \sum_{\infnorm{\bk}=1}^{\infty} 
    \left( \frac{\bk}{|\bk|} \right)^{\bbeta} \frac{1}{|\bk|^{d+2\onenorm{\balpha}-s}} \Gamma\!\left(\frac{\onenorm{\bbeta}+d +2\onenorm{\balpha} -s}{2}, |\bk|^2 \pi\right).
\end{multline}

The series in representations \eqref{eq:zeta_series_n} and
\eqref{eq:zeta_series_k} are rapidly convergent because the incomplete
Gamma factors decay exponentially in the lattice index. The following
tail estimate quantifies this decay and can be used to choose a finite
summation cutoff.

\begin{corollary}
    Given $a, b \in \bbR$, for $N \in \bbN$ such that $\pi N^2 \geq
    \max\left\{a, 1, \frac{2a+b+d}{2}\right\}$,    
    we have 
    \begin{equation*}
        \sum_{\infnorm{\bn}=N+1}^{\infty} |\bn|^{b} \, \Gamma(a, |\bn|^2 \pi) 
        \leq 3^{d-1} \, d^{1 + \max\{0, b/2\}} \, \pi^a \cdot N^{2a+b+d} \re^{-\pi N^2}.   
    \end{equation*}
\end{corollary}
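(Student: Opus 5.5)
We need to bound $\sum_{\infnorm{\bn}\ge N+1} |\bn|^b\,\Gamma(a,\pi|\bn|^2)$. The plan is to group the lattice points into shells $\infnorm{\bn}=m$ for $m\ge N+1$, bound each factor by a function of $m$ alone, and then sum a geometrically decaying series in $m$.

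\textbf{Step 1: shell size and norm range.} The shell $\{\infnorm{\bn}=m\}$ has $(2m+1)^d-(2m-1)^d$ points. By the mean value theorem this is at most $2d(2m+1)^{d-1}\le 2d\,(3m)^{d-1}$, which produces the factor $3^{d-1}d$. On the shell we have $m\le|\bn|\le\sqrt d\,m$. Hence $|\bn|^b\le d^{\max\{0,b/2\}}m^b$: for $b\ge0$ we use the upper bound on $|\bn|$, and for $b<0$ the lower bound.

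\textbf{Step 2: incomplete Gamma bound.} We need $\Gamma(a,x)\le C x^{a-1}e^{-x}$ for $x\ge\max\{a,1\}$, roughly, with $C$ absolute, or at least a bound of the form $x^{a}e^{-x}$ up to a constant. If $a\le1$, then $t^{a-1}\le x^{a-1}$ for $t\ge x$, so $\Gamma(a,x)\le x^{a-1}e^{-x}$. If $a>1$, integrating by parts repeatedly, or using $t^{a-1}e^{-t}\le x^{a-1}e^{-x}e^{-(t-x)(1-(a-1)/x)}$, gives $\Gamma(a,x)\le x^{a-1}e^{-x}/(1-(a-1)/x)$, which is $\le x^{a}e^{-x}$ when $x\ge a$. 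In all cases, for $x\ge\max\{a,1\}$, we get $\Gamma(a,x)\le x^{a}e^{-x}$, using $x^{a-1}\le x^a$ since $x\ge1$. The function $x\mapsto x^ae^{-x}$ is decreasing for $x\ge a$. Together with $|\bn|\ge m$ this gives $\Gamma(a,\pi|\bn|^2)\le \pi^a m^{2a}e^{-\pi m^2}$, and this is where the condition $\pi N^2\ge\max\{a,1\}$ is used.

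\textbf{Step 3: summing over shells.} Combining Steps 1 and 2 gives
\begin{equation*}
\sum_{\infnorm{\bn}\ge N+1}|\bn|^b\,\Gamma(a,\pi|\bn|^2)\le 2\cdot 3^{d-1}d^{1+\max\{0,b/2\}}\pi^a\sum_{m\ge N+1} m^{2a+b+d-1}e^{-\pi m^2}.
\end{equation*}
Set $f(m)=m^{2a+b+d-1}e^{-\pi m^2}$. We compare the sum with an integral, or with a geometric series via ratios. The quantity $\log f$ has derivative $(2a+b+d-1)/m-2\pi m$, which is $\le -\pi m$ once $\pi m^2\ge 2a+b+d-1$; the hypothesis $\pi N^2\ge(2a+b+d)/2$ is designed to ensure this or something close to it. Then $\sum_{m\ge N+1}f(m)\le\int_N^\infty f\le \frac{1}{\pi N}f(N)$, up to a constant, which is at most $\tfrac12 N^{2a+b+d}e^{-\pi N^2}$ using $\pi N\ge 2$. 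That constant absorbs the factor $2$. The claimed bound $3^{d-1}d^{1+\max\{0,b/2\}}\pi^aN^{2a+b+d}e^{-\pi N^2}$ then follows.

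\textbf{Main obstacle.} The main difficulty is bookkeeping so that the constants come out exactly as stated. This concerns the factor $2$ from the shell count against the $1/(\pi N)$ gain, and the precise use of the threshold $(2a+b+d)/2$ in the monotonicity and tail-integral estimate. If the integral comparison is too lossy, the fallback is a ratio argument. One would show that $f(m+1)/f(m)\le e^{-\pi(2m+1)}(1+1/m)^{k}$ with $k=2a+b+d-1$ is at most $1/2$, which yields $\sum_{m\ge N+1}f(m)\le 2f(N+1)\le f(N)$ and absorbs constants similarly.
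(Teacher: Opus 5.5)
Your Steps 1 and 2 are fine and match the paper, which obtains this corollary as the case $B=I$ of \cref{lem:tail_estimate}: same shell count, and the same bound $\Gamma(a,x)\le x^a\re^{-x}$ for $x\ge\max\{a,1\}$. The gap is in Step 3.

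Write $c:=2a+b+d-1$ and $f(x)=x^c\re^{-\pi x^2}$. Your estimate $(\log f)'(x)\le-\pi x$ on $[N,\infty)$ requires $\pi N^2\ge c$. The hypothesis only gives $\pi N^2\ge (c+1)/2$, which is strictly weaker once $c>1$. For example, with $d=1$, $a=2$, $b=1$, $N=1$ the hypothesis reads $\pi\ge 3$, but $c=5>\pi$.

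This cannot be repaired by adjusting constants. Take the admissible family $d=1$, $b=0$, $a=\pi N^2-\tfrac12$, so that $c=2\pi N^2-1$. Then $N$ lies within $O(1/N)$ of the maximizer $\sqrt{c/(2\pi)}$ of $f$, and
$\int_N^\infty f=\tfrac12\pi^{-\pi N^2}\Gamma(\pi N^2,\pi N^2)\sim f(N)/(2\sqrt2)$ as $N\to\infty$.
Hence $\int_N^\infty f\le C f(N)/(\pi N)$ fails for every fixed $C$; the ratio grows like $N$. The extra power of $N$ in the target $N^{c+1}\re^{-\pi N^2}=N\,f(N)$ is precisely what accommodates this near-peak regime.

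The missing step is to use the hypothesis only for monotonicity and then evaluate the tail integral exactly, as the paper does.
\begin{itemize}
\item Since $2\pi N^2\ge c+1>c$, we have $f'\le 0$ on $[N,\infty)$, so $\sum_{m\ge N+1}f(m)\le\int_N^\infty f$.
\item The substitution $t=\pi x^2$ gives $\int_N^\infty f=\tfrac12\pi^{-(c+1)/2}\Gamma\bigl(\tfrac{c+1}{2},\pi N^2\bigr)$.
\item Your Step 2 bound, applied with parameter $(c+1)/2=(2a+b+d)/2$, yields $\tfrac12N^{c+1}\re^{-\pi N^2}$. This is where the third threshold in the hypothesis enters, together with $\pi N^2\ge 1$.
\item The $\tfrac12$ cancels the $2$ from the shell count and gives the stated constant exactly.
\end{itemize}

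Your ratio fallback can also be made to work, but not as written. With ratio $\le\tfrac12$ you get $\sum_{m\ge N+1}f(m)\le f(N)$, and a final bound $2\cdot 3^{d-1}d^{1+\max\{0,b/2\}}\pi^aN^{c}\re^{-\pi N^2}$. This exceeds the claim at $N=1$, which is admissible (e.g.\ $a=1$, $b=0$, $d=3$).

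Using the hypothesis properly fixes this. For $m\ge N\ge1$ and $c\ge0$ one has $c\le 2\pi m^2-1$, hence $c/m\le 2\pi m-1/m$. So $f(m+1)/f(m)\le\re^{c/m-\pi(2m+1)}\le\re^{-\pi}$, and this holds trivially for $c<0$. Then $\sum_{m\ge N+1}f(m)\le\frac{\re^{-\pi}}{1-\re^{-\pi}}f(N)<\tfrac{1}{20}f(N)$, which closes the argument with room to spare.
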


\begin{proof}
    This is a special case of \cref{lem:tail_estimate}.
\end{proof}

In practice, this bound is used to choose the truncation range needed
for a prescribed accuracy. To reduce roundoff error when summing the
series, one may use Kahan summation \cite{kahan1965pracniques}. Finally,
because the linear system for the correction weights can become
ill-conditioned for large $p$, it is preferable to solve it either
symbolically or using high-precision arithmetic.

\subsection{The SinCoTrap quadrature rule}

We now summarize the quadrature rule obtained from the preceding
construction. Fix a correction order $p\in\bbN$ and the correction
stencil
\begin{equation*}
    \calX_p=\{\bi\in\bbZ^d:0\leq\onenorm{\bi}\leq p\}.    
\end{equation*}
The converged correction weights $w$ are determined by the linear system
\begin{equation*}
    Aw=b,
\end{equation*}
where for $\balpha, \bbeta\in\bbN^d$ such that $0\leq\onenorm{\balpha},
\onenorm{\bbeta}\leq p$, 
\begin{equation*}
    A_{\balpha,\bbeta}
    = \sum_{\bi\in\calG_{\bbeta}}\bi^{2\balpha},
    \qquad
    b_{\balpha}(s)
    = -Z_{2\balpha}(s).
\end{equation*}
Here $\calG_{\bbeta} = \left\{ (\pm \beta_1, \pm \beta_2, \ldots, \pm
\beta_d) \in \bbZ^d \right\}$. The weights are then extended to all
$\bi\in\calX_p$ by $w_{\bi}=w_{\bbeta}$ for $\bi\in\calG_{\bbeta}$.

For $h=a/N$, the SinCoTrap rule for $\sigma(\bx)=|\bx|^{-s}$ is defined
as
\begin{equation} \label{eq:sincotrap_compact}
    Q^p_h[\phi \cdot \sigma]
    := T^0_h[\phi \cdot \sigma] + \calC_{h}^p[\phi],
    \qquad
    \calC_{h}^p[\phi]
    := h^{d-s} \sum_{\bi \in \calX_p} w_{\bi}\,\phi(\bi h),
\end{equation}
where $\calC_{h}^p[\cdot]$ is the correction operator for the
singularity.

\section{Convergence order analysis} \label{sec:convergence_analysis}

This section establishes the convergence rate of the singularity
corrected trapezoidal rule for singular kernel $\sigma(\bx)=|\bx|^{-s}$
with $0<s<d$.

\begin{lemma}[Odd-moment cancellation] \label{lem:odd_component_cancellation}
    Let $g(\bx)=g(|\bx|)$ be smooth and compactly supported in
    $V=[-a,a]^d$, and let $\sigma(\bx)=|\bx|^{-s}$ with $0<s<d$. If a
    multi-index $\balpha$ has at least one odd component, then, for any
    correction order $p\in\bbN$, the exact integral and the numerical
    quadrature vanish,
    \begin{equation*}
        I[g\cdot\sigma\cdot\bx^{\balpha}]=0,
        \qquad \text{and} \qquad
        Q_h^p[g\cdot\sigma\cdot\bx^{\balpha}]=0.
    \end{equation*}
\end{lemma}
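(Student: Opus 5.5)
The plan is to use a single reflection argument. Suppose $\alpha_j$ is odd for some index $j$. Let $R_j:\bbR^d\to\bbR^d$ flip the sign of the $j$-th coordinate. Set $f(\bx):=g(\bx)\sigma(\bx)\bx^{\balpha}$. Since $g$ and $\sigma$ are radial, and $|R_j\bx|=|\bx|$, we get
\begin{equation*}
    f(R_j\bx) = g(\bx)\sigma(\bx)\,(-1)^{\alpha_j}\bx^{\balpha} = -f(\bx), \qquad \bx\neq 0 .
\end{equation*}
So $f$ is odd under $R_j$. Each of the three pieces (the exact integral, the punctured trapezoidal sum, and the correction term) is invariant under $R_j$, so each must vanish.

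\textbf{Exact integral.} Since $0<s<d$ and $g$ is compactly supported, $f$ is absolutely integrable on $V$; this justifies the change of variables. The cube $V=[-a,a]^d$ satisfies $R_jV=V$, and $R_j$ has Jacobian of absolute value one. Hence
\begin{equation*}
    I[f]=\int_V f(R_j\bx)\,\rd\bx=-I[f],
\end{equation*}
so $I[f]=0$.

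\textbf{Punctured trapezoidal sum.} The index set $\{\bi:0<\infnorm{\bi}\le N\}$ is mapped to itself by $\bi\mapsto R_j\bi$. The weights satisfy $c_{R_j\bi}=c_{\bi}$, because $m(\bi)$ depends only on the absolute values $|i_k|$. Reindexing the sum by $\bi\mapsto R_j\bi$ therefore gives $T_h^0[f]=-T_h^0[f]$, so $T_h^0[f]=0$. The origin is excluded from this sum, so no value $f(0)$ is ever needed.

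\textbf{Correction term.} By \eqref{eq:sincotrap_compact}, the correction applied to the integrand $g\cdot\sigma\cdot\bx^{\balpha}$ evaluates the smooth factor $\phi=g\,\bx^{\balpha}$ at the stencil points. Thus
\begin{equation*}
    \calC_h^p\bigl[g\,\bx^{\balpha}\bigr]=h^{d-s}\sum_{\bi\in\calX_p} w_{\bi}\,g(\bi h)\,(\bi h)^{\balpha}.
\end{equation*}
The stencil $\calX_p$ is invariant under $R_j$, since $\onenorm{R_j\bi}=\onenorm{\bi}$. The symmetry constraint \eqref{eq:symmetric_condition} gives $w_{R_j\bi}=w_{\bi}$, because $R_j\bi$ lies in the same sign-orbit $\calG_{\bbeta}$ as $\bi$. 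We also have $g(R_j\bi h)=g(\bi h)$ and $(R_j\bi h)^{\balpha}=-(\bi h)^{\balpha}$. Reindexing the sum by $R_j$ therefore shows that it equals its own negative, so it vanishes.

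At the origin $\bi=\bzero$ the term vanishes on its own: $\bzero^{\balpha}=0$ because $\alpha_j\ge 1$. Adding the three pieces gives $Q_h^p[g\cdot\sigma\cdot\bx^{\balpha}]=T_h^0[f]+\calC_h^p[g\,\bx^{\balpha}]=0$.

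The only point needing care is how the correction operator acts on the product integrand: it must act on the smooth factor $g\,\bx^{\balpha}$, not on the singular product. This matches the convention of Step 3 of the construction, where the correction is applied to $\phi$ and the kernel is absorbed into the weights and the factor $h^{d-s}$. Everything else is a routine reflection symmetry.
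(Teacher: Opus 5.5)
Your proof is correct and follows essentially the same route as the paper. Both reflect the coordinate carrying the odd exponent and apply this separately to the exact integral, to the punctured trapezoidal sum (via $c_{R_j\bi}=c_{\bi}$), and to the correction term (via the sign-orbit symmetry of the weights). Your version is slightly more careful in two places: you keep the factor $g(\bi h)$ in the correction sum, and you note explicitly that the integrand is absolutely integrable.
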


\begin{proof}
    Without loss of generality, suppose that $\alpha_1$ is odd. Since
    $g$ and $\sigma$ are radial, then we have
    \begin{align*}
        I[g\cdot\sigma\cdot\bx^{\balpha}]
        &=\int_{[-a,a]^{d-1}}\int_0^a
        g(|\bx|)\sigma(|\bx|)
        \left[x_1^{\alpha_1}+(-x_1)^{\alpha_1}\right]
        \prod_{j=2}^d x_j^{\alpha_j}\,\rd x_1\cdots\rd x_d
        =0.
    \end{align*}
    Similarly, using
    $c_{(i_1,i_2,\ldots,i_d)}=c_{(-i_1,i_2,\ldots,i_d)}$, we obtain
    \begin{align*}
        T_h^0[g\cdot\sigma\cdot\bx^{\balpha}]
        &=h^d\sum_{i_2=-N}^N\cdots\sum_{i_d=-N}^N
        \sum_{i_1=1}^N c_{\bi}\,g(|\bi h|)\sigma(|\bi h|)
        \left[(i_1h)^{\alpha_1}+(-i_1h)^{\alpha_1}\right]
        \prod_{j=2}^d(i_jh)^{\alpha_j}
        =0.
    \end{align*}
    For every sign-orbit $\calG_{\bbeta}$, the symmetry condition
    \eqref{eq:symmetric_condition} gives
    \begin{equation*}
        \sum_{\bi\in\calG_{\bbeta}}(\bi h)^{\balpha}
        =\frac12\sum_{\bi\in\calG_{\bbeta}}
        \left[(\bi h)^{\balpha}
        +((-i_1,i_2,\ldots,i_d)h)^{\balpha}\right]=0.
    \end{equation*}
    Consequently, the correction $\calC_h^p[g\cdot\bx^{\balpha}]$ is
    zero and we conclude that $Q_h^p[g\cdot\sigma\cdot\bx^{\balpha}]=0$.
\end{proof}

Now we state the main theorem regarding the convergence order of
SinCoTrap for compactly supported singular integrals.

\begin{theorem} \label{thm:error_order_compact_fun}
    Let $V=[-a,a]^d\subseteq\bbR^d$ and consider
    \begin{equation*}
        I[\phi \cdot \sigma] = \int_{V} \phi(\bx)\sigma(\bx)\,\rd \bx,
        \qquad
        \sigma(\bx)=\frac{1}{|\bx|^s},
        \qquad
        0<s<d,
    \end{equation*}
    where $\phi$ is smooth and compactly supported in $V$. For a
    correction order $p\in\bbN$, let $Q_h^p[\cdot]$ be the SinCoTrap
    rule defined in \eqref{eq:sincotrap_compact}. Then, as $h\to0^+$,
    \begin{equation*}
        I[\phi \cdot \sigma] = Q^p_h[\phi \cdot \sigma] + O(h^{2p+2+d-s}).
    \end{equation*}
\end{theorem}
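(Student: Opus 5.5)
We split the error using the Taylor surrogate from Step 2. Fix a radial window $g$ satisfying \eqref{eq:localized_function_g}, let $P_\phi$ be the degree-$(2p+1)$ Taylor polynomial of $\phi$ at the origin, and set $\tphi=P_\phi g$. Since $Q_h^p$ is linear,
\begin{equation*}
    I[\phi\sigma]-Q_h^p[\phi\sigma]
    = \bigl(I-T_h^0\bigr)[(\phi-\tphi)\sigma] - \calC_h^p[\phi-\tphi]
    + \bigl(I-Q_h^p\bigr)[\tphi\sigma].
\end{equation*}
We bound the three terms separately.

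\textbf{Correction term.} The function $\phi-\tphi$ vanishes to order $2p+2$ at the origin. Hence $|(\phi-\tphi)(\bi h)|\le C h^{2p+2}$ for $\bi\in\calX_p$. The converged weights $w_{\bi}$ are fixed numbers, so $|\calC_h^p[\phi-\tphi]|\le C h^{d-s}h^{2p+2}$.

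\textbf{Surrogate term.} Expand $\tphi=\sum_{\onenorm{\balpha}\le 2p+1} c_{\balpha}\,g\,\bx^{\balpha}$. By \cref{lem:odd_component_cancellation}, every monomial with an odd component contributes zero to both $I$ and $Q_h^p$. This leaves the even monomials $\bx^{2\balpha}$ with $\onenorm{\balpha}\le p$. For each of these, the rescaling in the derivation of $a(h)$ gives
\begin{equation*}
    (I-T_h^0)[g\sigma\bx^{2\balpha}] = h^{d-s+2\onenorm{\balpha}}\, b^h_{\balpha}.
\end{equation*}
The correction is
\begin{equation*}
    \calC_h^p[g\bx^{2\balpha}] = h^{d-s+2\onenorm{\balpha}}\,(AG^hw)_{\balpha}.
\end{equation*}
The defect for this monomial is therefore $h^{d-s+2\onenorm{\balpha}}\bigl(b^h_{\balpha}-(AG^hw)_{\balpha}\bigr)$. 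We have $b^h=b+O(h^{2p+2})$ by \cref{lem:error_order_rhs}, $G^h=I+O(h^{2p+2})$, and $Aw=b$, so the bracket is $O(h^{2p+2})$. Each defect is thus at most $O(h^{2p+2+d-s})$. One could instead use $w^h$ and \cref{lem:error_order_weights}, since $AG^hw^h=b^h$ exactly.

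\textbf{Remainder term (main obstacle).} It remains to show $(I-T_h^0)[f]=O(h^{2p+2+d-s})$ for $f=(\phi-\tphi)\sigma$. This $f$ is compactly supported in $V$, smooth away from the origin, and behaves like $r^{2p+2-s}$ times a smooth angular factor near the origin. It is only finitely smooth, so the periodic trapezoidal rule is not automatically super-algebraic. I would first decompose $f$ with a smooth dyadic partition of unity $\chi(\bx/h)+\sum_{j}\psi(2^{-j}\bx/h)$.

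\textbf{Inner piece.} On the piece where $|\bx|\lesssim h$, both the integral and the grid sum are bounded by $h^d\cdot h^{2p+2-s}$.

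\textbf{Dyadic shells.} On the shell at scale $\rho=2^jh$, the piece is smooth and compactly supported. Its derivatives of order $m$ are bounded by $\rho^{2p+2-s-m}$, and its support has volume $\rho^d$. By Poisson summation, or the Euler--Maclaurin error for compactly supported smooth functions, the trapezoidal error on this shell is at most $C_m h^m\rho^{2p+2-s-m+d}$. Choose $m>2p+2+d-s$. Summing the geometric series over $\rho\ge h$ is then dominated by the smallest scale, $\rho\sim h$, which gives $O(h^{2p+2+d-s})$.

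\textbf{Outer region.} Away from the origin, $f$ is smooth and compactly supported, so its contribution is super-algebraically small.

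Care is needed in two places. The homogeneous expansion of $\phi-\tphi$ must be justified as having uniform derivative bounds $|\partial^{\bgamma} f|\le C|\bx|^{2p+2-s-\onenorm{\bgamma}}$. This follows from Taylor's theorem with integral remainder applied to $\phi-\tphi$, combined with the homogeneity of $\sigma$. Also, the origin is excluded from $T_h^0$, which matches defining $f(0)=0$, since $f$ is continuous there for $s<2p+2$. Combining the three bounds proves the theorem.
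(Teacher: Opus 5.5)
Your proof is correct and uses the paper's skeleton. The three-term split is the same, the correction term is bounded the same way, and the surrogate term gets the same odd-moment reduction. Your direct bound $b^h-AG^hw=(b^h-b)+A(I-G^h)w=O(h^{2p+2})$ is just the argument of \cref{lem:error_order_weights} written inline.

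The genuine difference is the remainder $(I-T_h^0)[(\phi-\tphi)\sigma]$, and there the paper proves nothing new. After rescaling, this term is $h^{d-s}$ times the left-hand side of \eqref{eq:error_order_rhs}, with $\balpha=\bzero$ and $g$ replaced by $f=\phi-\tphi$. This $f$ satisfies the lemma's hypotheses: $f\in\ccinf$ and $\partial^{\bbeta}f(0)=0$ for $1\le\onenorm{\bbeta}\le 2p+1$. So \cref{lem:error_order_rhs} gives $f(0)b_{\bzero}(s)+O(h^{2p+2})=O(h^{2p+2})$ at once, because $f(0)=0$ kills the constant term.

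Your dyadic decomposition re-derives this bound from scratch, by the same mechanism as the proof of \cref{lem:error_order_rhs}: Poisson summation plus repeated integration by parts in the dominant direction $k_j$. The paper instead uses a single unit-scale cutoff $\eta$ in the scaled variable and one weighted $L^1$ bound on $\partial_j^q$ of the remainder. There the radial integral $\int_{1/2}^{R/h} r^{\cdots}\,\rd r$, dominated by small $r$, plays the role of your geometric series dominated by $\rho\sim h$.

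The paper's route is more economical, since the lemma is needed anyway for the weights. Yours is self-contained and works directly from the bounds $|\partial^{\bgamma}f|\le C|\bx|^{2p+2-s-\onenorm{\bgamma}}$.

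Two small points:
\begin{itemize}
\item Take $m>\max\{d,\,2p+2+d-s\}$. The lattice sum $\sum_{\bk\neq\bzero}\infnorm{\bk}^{-m}$ needs $m>d$, and $m>2p+2+d-s$ does not imply this when $s>2p+2$.
\item Your continuity remark at the origin is unnecessary, and it only covers $s<2p+2$. The case $s\ge 2p+2$ is allowed when $d>2p+2$. Nothing breaks there, because $T_h^0$ omits the origin and your inner-piece bound never uses a value of $f$ at $0$.
\end{itemize}
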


\begin{proof}
    Let $P_{\phi}(\bx)$ be the Taylor polynomial of $\phi(\bx)$ at
    $\bx=0$ of degree $2p+1$ and define the localized surrogate as
    \begin{equation*}
        \tphi(\bx) := P_{\phi}(\bx) g(\bx), \quad 
        P_{\phi}(\bx) := \sum_{\onenorm{\balpha} = 0}^{2p+1} \frac{\partial^{\balpha} \phi(0)}{\balpha!} \bx^{\balpha},
    \end{equation*}
    where $g(\bx) \in \ccinf(\bbR^d)$ is compactly supported in domain
    $V$ such that
    \begin{equation*}
        g(\bx)=g(|\bx|),\qquad g(0)=1,\qquad \partial^{\balpha}g(0)=0,
        \quad 1\leq \onenorm{\balpha}\leq 2p+1.
    \end{equation*}
    Decompose the quadrature error of the singular integral as:
    \begin{align*}
        I[\phi \cdot \sigma] - Q_h^p[\phi \cdot \sigma] 
        &= (I - T^0_h) [(\phi - \tphi) \cdot \sigma] + \calC_{h}^p[\tphi - \phi] + (I - Q_h^p)[\tphi \cdot \sigma] \\
        &:= E_1 + E_2 + E_3.
    \end{align*}

    For $E_1$, denote $f:= \phi - \tphi$, then by direct calculation we
    have
    \begin{equation} \label{eq:diff_phi_tphi}
        f(0) = 0, \quad \partial^{\balpha} f(0) = 0, \quad 
        \text{for } 1 \leq \onenorm{\balpha} \leq 2p+1.
    \end{equation}
    Thus, using the fact that $f \in C^{\infty}_{\text{c}}(V)$, we can
    write
    \begin{align*}
        E_1 = I[f\cdot \sigma] - T^0_h[f \cdot \sigma]
        &= \int_{[-a, a]^d} f(\bx) \frac{1}{|\bx|^s} \rd \bx 
            - \sum_{\infnorm{\bi}=1}^N h^d f(\bi h) \frac{1}{|\bi h|^s} \\
        &= h^{d-s} \Big( \int_{\bbR^d} f(\bx h) \frac{1}{|\bx|^s} \rd \bx 
            - \sum_{\infnorm{\bi}=1}^{\infty} f(\bi h) \frac{1}{|\bi|^s} \Big) 
        = O(h^{2p+2+d-s}),
    \end{align*}
    where we used \cref{lem:error_order_rhs} with $\balpha=\bzero$ and
    $f(0)=0$ in the last step.

    For $E_2$, note that Taylor's theorem with \eqref{eq:diff_phi_tphi}
    gives $|\tphi(\bx)-\phi(\bx)|\leq C|\bx|^{2p+2}$ on $V$,
    hence on the fixed stencil $\calX_p$ with bounded weights, we have
    \begin{equation*}
        |E_2|
        \leq h^{d-s}\sum_{\bi\in\calX_p} |w_{\bi}|\,|\tphi(\bi h)-\phi(\bi h)|
        \leq C h^{2p+2+d-s}.
    \end{equation*}

    For $E_3$, expand $\tphi=P_\phi g$ and use
    \cref{lem:odd_component_cancellation} to drop the odd monomials we
    obtain
    \begin{equation*}
        (I-Q_h^p)[\tphi\cdot\sigma]
        = \sum_{\onenorm{\balpha}=0}^{p}\frac{\partial^{2\balpha}\phi(0)}{(2\balpha)!}\,
        (I-Q_h^p)[\bx^{2\balpha}g\cdot\sigma].
    \end{equation*}
    For each even monomial above, according to the moment conditions
    \eqref{eq:weight_equation} we have
    \begin{equation*}
        I[\bx^{2\balpha}g\cdot\sigma] - T_h^0[\bx^{2\balpha}g\cdot\sigma]
        = h^{d-s}\sum_{\bi\in\calX_p} w_{\bi}^h\,(\bi h)^{2\balpha}g(\bi h).
    \end{equation*}
    On the other hand, the SinCoTrap rule $Q_h^p$ uses the converged
    weights $w_{\bi}$, so
    \begin{equation*}
        (I-Q_h^p)[\bx^{2\balpha}g\cdot\sigma]
        = \left(I-T_h^0\right)[\bx^{2\balpha}g\cdot\sigma] 
        - h^{d-s}\sum_{\bi\in\calX_p} w_{\bi}\,(\bi h)^{2\balpha}g(\bi h).
    \end{equation*}
    Substituting the moment identity gives
    \begin{equation*}
        (I-Q_h^p)[\bx^{2\balpha}g\cdot\sigma]
        = h^{d-s}\sum_{\bi\in\calX_p}
        (w_{\bi}^h-w_{\bi})\,(\bi h)^{2\balpha}g(\bi h).
    \end{equation*}
    Thus, by \cref{lem:error_order_weights} we have
    \begin{align*}
        |E_3| 
        \leq C h^{d-s} \sum_{\onenorm{\balpha} = 0}^{p} \sum_{\bi \in \calX_p} 
            \left|(\bi h)^{2\balpha} g(\bi h)\right| \left| w_{\bi}^h - w_{\bi} \right| 
        \leq C h^{2p+2+d-s}.
    \end{align*}

    Combining the three bounds yields the claim.
\end{proof}

\begin{remark}
    The assumption that $\phi$ is smooth is imposed only for
    convenience; the argument requires only finitely many derivatives.
    In particular, it suffices to assume that $\phi\in C_c^{d+2p+3}(V)$
    and to use the corresponding finite-regularity version of
    \cref{lem:error_order_rhs}, obtained by applying Poisson summation
    to finitely differentiable, compactly supported functions. This
    regularity threshold is sufficient and may not be optimal.
\end{remark}

\begin{remark}
    Replacing $w_{\bi}$ by the non-converged weights $w_{\bi}^h$ removes
    the term $E_3$ above, but $E_1$ and $E_2$ still scale like
    $h^{2p+2+d-s}$, so the overall order remains unchanged.
\end{remark}

Next we give the convergence rate for singular integrals satisfying the
smooth periodic boundary conditions.

\begin{corollary}
    Let $V=[-a,a]^d\subseteq\bbR^d$ and consider
    \begin{equation*}
        I[\vphi \cdot \sigma] = \int_V \vphi(\bx)\sigma(\bx)\,\rd \bx,
        \qquad
        \sigma(\bx)=\frac{1}{|\bx|^s},
        \qquad
        0<s<d,
    \end{equation*}
    where $\vphi(\bx)$ is smooth, $\vphi(\bx)\sigma(\bx)$ is periodic on
    $V$ and is smooth except at the singular points. For a correction
    order $p\in\bbN$, let $Q_h^p[\cdot]$ be the SinCoTrap rule defined
    in \eqref{eq:sincotrap_compact}. Then, as $h\to0^+$,
    \begin{equation*}
        I[\vphi \cdot \sigma] = Q^p_h[\vphi \cdot \sigma] + O(h^{2p+2+d-s}).
    \end{equation*}
\end{corollary}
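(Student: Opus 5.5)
We reduce the periodic case to \cref{thm:error_order_compact_fun} with the radial cutoff $\eta$ from \eqref{eq:boundary_cutoff}, following Step 1 of the motivation. Write $\vphi = \vphi\eta + \vphi(1-\eta)$ and set $\phi:=\vphi\eta$, which is smooth and compactly supported in $V$ (its support lies in $|\bx|\le a/2$). Both $I$ and $Q_h^p$ are linear, so
\begin{equation*}
    I[\vphi\cdot\sigma]-Q_h^p[\vphi\cdot\sigma]
    = \bigl(I-Q_h^p\bigr)[\phi\cdot\sigma]
    + \bigl(I-Q_h^p\bigr)[\vphi(1-\eta)\cdot\sigma].
\end{equation*}
\cref{thm:error_order_compact_fun} applies directly to the first term and gives $O(h^{2p+2+d-s})$.

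For the second term, let $F:=\vphi\,\sigma\,(1-\eta)$. Since $1-\eta\equiv 0$ on $|\bx|\le a/4$, the function $F$ vanishes near the origin. Away from the origin $\sigma$ is smooth, so $F$ is smooth on all of $V$. Near $\partial V$ we have $\eta=0$, so there $F$ coincides with $\vphi\sigma$, which is periodic and smooth. Hence $F$ extends to a $C^\infty$ function on the torus $\bbR^d/(2a\bbZ)^d$.

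Now examine $Q_h^p[F]$. The correction $\calC_h^p$ only evaluates $\vphi(1-\eta)$ at the stencil points $\bi h$ with $\onenorm{\bi}\le p$. For $h$ small enough that $ph<a/4$, all these values vanish, so $Q_h^p[F]=T_h^0[F]$. Since $F(0)=0$, the punctured rule $T_h^0[F]$ equals the full tensor trapezoidal rule. The latter, with weights $c_{\bi}$, is exactly the periodic trapezoidal rule for a $2a$-periodic function, because the boundary half-weights recombine across opposite faces.

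It remains to bound the periodic trapezoidal error. By Poisson summation, this error equals $-\sum_{\bk\neq0}\widehat F(\bk N/a)$ up to normalization, where $\widehat F$ denotes the Fourier coefficients of $F$ on the torus. Repeated integration by parts gives $|\widehat F(\bk)|\le C_m|\bk|^{-m}$ for every $m$. The error is therefore $O(h^m)$ for all $m$, which in particular is $O(h^{2p+2+d-s})$. Adding the two contributions proves the corollary.

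The only delicate point is checking that the tensor weights $c_{\bi}$ reproduce the periodic rule. This holds because periodicity gives $F(\bx_{\bi})=F(\bx_{\bi'})$ whenever $\bi'$ is obtained from $\bi$ by flipping coordinates with $|i_j|=N$. Summing the weights $2^{-m(\bi)}$ over each such class yields weight $1$ per point of the torus grid.
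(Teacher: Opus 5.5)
Your proof is correct and follows the paper's argument: the same cutoff split, \cref{thm:error_order_compact_fun} applied to $\vphi\eta$, the vanishing of the correction on $\vphi(1-\eta)$ for small $h$ (the paper phrases this as $\calC_h^p[\vphi\eta]=\calC_h^p[\vphi]$), and super-algebraic accuracy of the trapezoidal rule for the smooth periodic remainder. The only differences are cosmetic: you use Poisson summation where the paper cites Euler--Maclaurin, and you check explicitly that the boundary weights $c_{\bi}$ reproduce the periodic rule, which the paper leaves implicit.
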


\begin{proof}
    Using the smooth cutoff $\eta$ defined in
    \eqref{eq:boundary_cutoff}, we split the integral as
    \begin{equation*}
        I[\vphi \cdot \sigma] 
        = I[\vphi \cdot \sigma \cdot \eta] + I[\vphi \cdot \sigma \cdot (1 - \eta)].
    \end{equation*}
    For the first term, since $\vphi(\bx)\eta(\bx)$ is smooth and
    compactly supported in $V$, \cref{thm:error_order_compact_fun}
    implies
    \begin{equation*}
        I[\vphi \cdot \sigma \cdot \eta] 
        = T^0_h[\vphi \cdot \sigma \cdot \eta] + \calC_{h}^p[\vphi \cdot \eta] + O(h^{2p+2+d-s}).
    \end{equation*}
    For the second term, since $\vphi(\bx) \sigma(\bx) (1 - \eta(\bx))$
    is $0$ at $x=0$, the punctured trapezoidal rule is equivalent to the
    standard trapezoidal rule. Moreover, since $\vphi(\bx) \sigma(\bx)
    (1 - \eta(\bx))$ is smooth and periodic with respect to the domain
    $V$, the Euler-Maclaurin formula yields
    \begin{equation*}
        I[\vphi \cdot \sigma \cdot (1 - \eta)] = T^0_h[\vphi \cdot \sigma \cdot (1 - \eta)] + O(h^{q}), 
    \end{equation*}
    for any $q>0$. Finally, note that $\calC_{h}^p[\vphi \cdot \eta] =
    \calC_{h}^p[\vphi]$ for $h>0$ sufficient small, we have
    \begin{align*}
        I[\vphi \cdot \sigma] 
        &= T^0_h[\vphi \cdot \sigma \cdot \eta] + \calC_{h}^p[\vphi \cdot \eta] + O(h^{2p+2+d-s}) 
            + T^0_h[\vphi \cdot \sigma \cdot (1 - \eta)] + O(h^{q}) \\
        &= T^0_h[\vphi \cdot \sigma] + \calC_{h}^p[\vphi] + O(h^{2p+2+d-s}),
    \end{align*}
    which completes the proof.
\end{proof}

\section{Extensions} \label{sec:extension}

In this section, we present two extensions of the SinCoTrap. First, we
extend the method to integrals with singular kernel $1/|\bx^\top B
\bx|^{s/2}$, where $B$ is a symmetric positive definite matrix. Second, we
extend the method to integrals over general bounded domains,
specifically parallelotopes. We note that these two extensions are
equivalent under a linear transformation and they find applications in
electronic structure calculations for solids with parallelepiped-shaped
unit cells. 

\subsection{Extension to anisotropic singular kernel}

We begin with an anisotropic singular kernel induced by a symmetric
positive definite matrix $B \in \bbR^{d\times d}$. Consider
\begin{equation*}
    I[\phi \cdot \sigma_B] = \int_{V} \phi(\bx) \sigma_B(\bx) \rd \bx, \quad 
    \sigma_B(\bx) = \frac{1}{|\bx^\top B \bx|^{s/2}} \quad 0 < s < d,
\end{equation*}
where $V=[-a, a]^d$, $\phi(\bx)$ is smooth and compactly supported in
$V$. Since $B$ is invertible, the only singular point of $I[\phi \cdot
\sigma_B]$ remains at $\bx=0$. For discretization, we employ the same
uniform grid used for the isotropic kernel $\sigma(\bx)=1/|\bx|^s$.

\paragraph{Central symmetry and moment constraints.}
Since the singular kernel is changed, the local correction weights must
be recomputed accordingly. In contrast to the isotropic kernel
$|\bx|^{-s}$, the anisotropic kernel $\sigma_B(\bx)$ is generally not
radially symmetric (unless $B$ is a scalar multiple of the identity). It
retains only \emph{central symmetry}
\begin{equation*}
    \sigma_B(-\bx)=\sigma_B(\bx),
\end{equation*}
since $\bx^\top B \bx = (-\bx)^\top B (-\bx)$. This reduced symmetry
affects both the design of the correction stencil $\calX_p$ and which
moment constraints are automatically satisfied.

Following the moment-matching construction in
\cref{sec:singularity_correction}, we again determine the correction
weights by requiring exactness on localized monomials
$g(\bx)\sigma_B(\bx)\bx^{\balpha}$. Accordingly, we assume the
correction stencil $\calX_p\subset\bbZ^d$ is centrally symmetric and
contains the origin, i.e., $\bi\in\calX_p$ implies $-\bi\in\calX_p$ and
$0\in\calX_p$, and we impose the same symmetry on the weights:
\begin{equation*}
    w_{-\bi}^h = w_{\bi}^h.
\end{equation*}
These weights are determined by enforcing, for $0\le
\onenorm{\balpha}\le 2p+1$,
\begin{equation*}
    \int_V g(\bx)\sigma_B(\bx)\bx^{\balpha}\,\rd \bx
    = T^0_h[g \cdot \sigma_B \cdot \bx^{\balpha}]
    + a(h) \sum_{\bi \in \calX_p} w_{\bi}^h \, (\bi h)^{\balpha} \, g(\bi h).
\end{equation*}
As in the isotropic case, we choose a radial localized function  $g\in\ccinf(V)$ such
that $g(0)=1$ and $\partial^{\bbeta}g(0)=0$ for 
$1\leq \onenorm{\bbeta}\leq 2p+1$.

With only central symmetry, the automatic cancellation applies to
monomials of \emph{odd total degree}: if $\onenorm{\balpha}$ is odd,
then both the integral and the corrected rule vanish (see
\cref{lem:odd_l1_cancellation}). In contrast, monomials with
$\onenorm{\balpha}$ even but with at least one odd component are not
forced to vanish in general. Therefore, the moment-matching system must
include all multi-indices with even total degree,
\begin{equation*}
    \onenorm{\balpha}=2q,\quad \text{for } q=0,1,\ldots,p,
\end{equation*}
which determines the number of independent constraints (and hence the
number of independent weights).

\paragraph{Stencil choice and linear system.}
As discussed above, in the anisotropic case the symmetry only forces
cancellation of moments with odd total degree. Hence, to obtain a square
moment-matching system we choose the stencil so that the number of
independent weights matches the number of \emph{even-total-degree}
moment conditions, i.e., $N_w$.

A convenient way is to take one representative in the nonnegative
orthant for each multi-index of even total degree up to $2p$ and then
include its negative. Specifically, define
\begin{equation*}
    \calX_p := \calX_p^{+} \cup \calX_p^{-}, \qquad
    \calX_p^+ := \left\{ \bbeta \in \bbN^d : \onenorm{\bbeta} \leq 2p,\ \onenorm{\bbeta}\ \text{even} \right\}, \qquad
    \calX_p^- := -\calX_p^+.
\end{equation*}
Since every nonzero point in $\calX_p^+$ is paired with its negative in
$\calX_p^-$, and conversely, thus the full stencil $\calX_p$ is
centrally symmetric. We impose the central symmetry
$w_{-\bbeta}^h=w_{\bbeta}^h$, so the independent unknowns can be taken
as $\{w_{\bbeta}^h\}_{\bbeta\in\calX_p^+}$. (Here the origin is included
in $\calX_p^+$ and is not duplicated by the sign pairing.) For
illustration, we plot the correction stencils $\calX_p$ for $p=0,1,2$ in
two dimensions in \cref{fig:stencil_even_central_symmetric_2d}.

\begin{figure}[htbp]
    \centering
    \includegraphics[width=0.9\textwidth]{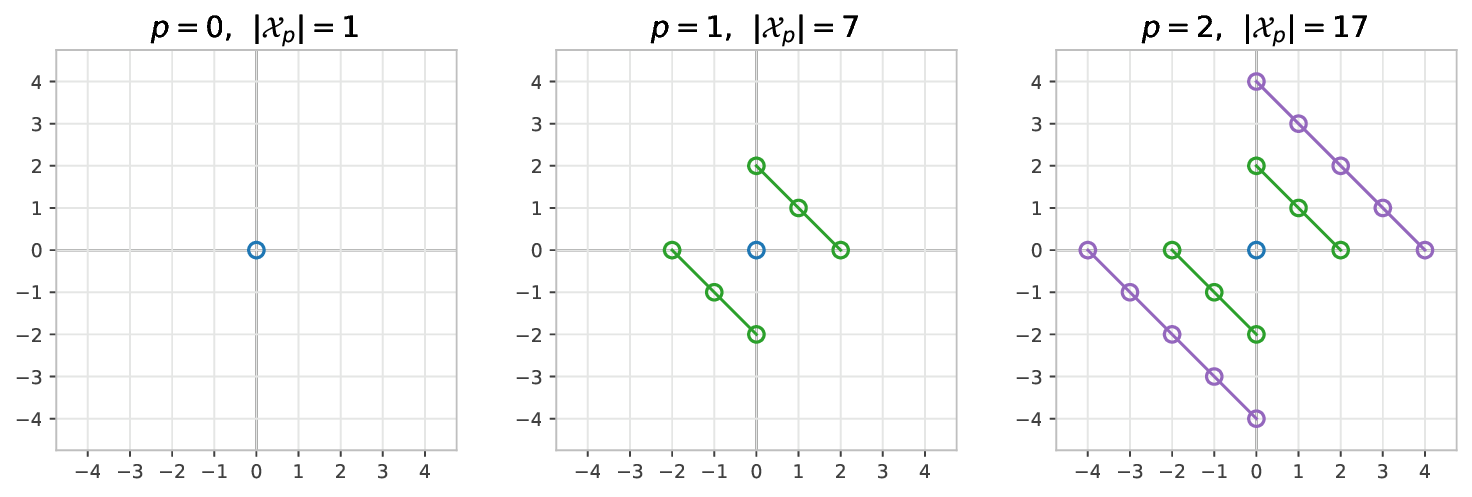}
    \caption{Correction stencils $\calX_p$ for $d=2$ and $p=0, 1, 2$ for
    anisotropic singular kernel $1/|\bx^\top B \bx|^{s/2}$.}
    \label{fig:stencil_even_central_symmetric_2d}
\end{figure}

\begin{remark}
    The stencil above is a simple and convenient choice but not unique.
    Any alternative stencil can be used provided that:
    \begin{enumerate}
        \item it is centrally symmetric and contains the origin;
        \item the number of independent weights (after enforcing
        $w_{-\bi}^h=w_{\bi}^h$) matches the number of even-total-degree
        moment constraints, i.e., $\#\calX_p^+ = N_w$;
        \item the resulting matrix for moment matching is nonsingular.
    \end{enumerate}
\end{remark}

Now we derive the matrix form of the moment-matching system. The scaling
factor remains $a(h)=h^{d-s}$ by the same homogeneity argument as in
\cref{sec:singularity_correction}. Order the even-total-degree moment
indices as $\balpha_1,\ldots,\balpha_{N_w}$ and the independent stencil
nodes in $\calX_p^+$ as $\bbeta_1,\ldots,\bbeta_{N_w}$ with the same
ordering. Then the moment conditions become
\begin{equation*}
    \sum_{k=1}^{N_w} w_{\bbeta_k}^h
    \sum_{\bi \in \{\bbeta_k, -\bbeta_k\}} \bi^{\balpha_j}\, g(\bi h)
    =
    \int_{\bbR^d} g(\bx h)\sigma_B(\bx)\bx^{\balpha_j}\,\rd\bx
    - \sum_{\infnorm{\bi}=1}^{\infty} g(\bi h)\sigma_B(\bi)\bi^{\balpha_j} := b_j^h,
\end{equation*}
for $j=1,2,\ldots,N_w$. As in \cref{sec:singularity_correction}, it is
convenient to introduce
\begin{equation} \label{eq:matrix_of_scaled_zeta}
    A_{jk} := \sum_{\bi \in \{\bbeta_k, -\bbeta_k\}} \bi^{\balpha_j},
    \qquad
    (G^h)_{jk} := g(\bbeta_k h)\delta_{jk},
\end{equation}
Then the system becomes
\begin{equation*}
    A G^h w^h = b^h.
\end{equation*}

\begin{remark}
    It is tempting to reuse the sign-orbit (radial-symmetry) grouping
    from the isotropic case. However, once the constraint set includes a
    multi-index $\balpha$ with $\onenorm{\balpha}$ even but with at
    least one odd component, the corresponding row in the coefficient
    matrix defined as $A_{jk} := \sum_{\bi \in \calG_{\bbeta_k}}
    \bi^{\balpha_j}$ becomes zero under sign-orbit summation, making the
    system singular and thus the right-hand side vector no longer lies
    in the spanned space of $A$. This is precisely why we switch to a
    merely centrally symmetric stencil in the anisotropic setting.
\end{remark}

In practice, for a given choice of stencil $\calX_p$, the nonsingularity
of the resulting moment matrix $A$ can be checked directly. Since every
entry of $A$ is an integer, $\det(A)\in\bbZ$ as well, and it can
therefore be computed exactly using integer arithmetic (thus avoiding
roundoff issues). Although exact determinant computation has cost that
grows rapidly with the matrix size, this check remains very fast for the
moderate values of $p$ used in practice.

\paragraph{Weight computation.}
The limiting weights $w_{\bi}=\lim_{h\to 0^+} w_{\bi}^h$ can be defined
by passing to the limit $h\to 0^+$, and the convergence analysis follows
similarly to \cref{sec:singularity_correction}. Furthermore, the
right-hand side can be evaluated using analytic continuation through a
generalized zeta function associated with matrix $B$.

\begin{theorem} \label{thm:scaled_zeta_function}
    For any symmetric positive definite matrix $B \in \bbR^{d\times d}$,
    define
    \begin{equation*}
        Z_{\balpha}^B(s) := \sum_{\bn \in \bbZ^{d} \setminus \{\bzero\}} \frac{\bn^{\balpha}}{(\bn^\top B \bn)^{s/2}}, 
        \quad \rep(s) > d + \onenorm{\balpha},
    \end{equation*}
    where $\onenorm{\balpha}$ is an even number. Then $Z_{\balpha}^B(s)$
    admits a meromorphic extension to the whole complex plane with at
    most a simple pole at $s = d + \onenorm{\balpha}$. Further, it
    satisfies the functional equation as
    \begin{align*}
        \frac{\Gamma(s/2)}{\pi^{s/2}} Z_{\balpha}^B(s)
        &= |B|^{-\frac{1}{2}} \left(-\frac{1}{4\pi}\right)^{\frac{\onenorm{\balpha}}{2}} 
            \frac{2H_{\balpha}(0; B^{-1})}{s - d - \onenorm{\balpha}} 
        - \frac{2\delta_{0, \balpha}}{s} 
        + \sum_{\bn\in\bbZ^d \setminus \{\bzero\}} \bn^{\balpha} 
            \int_{1}^{\infty} \re^{-\pi t \, \bn^\top B \bn} \, t^{\frac{s}{2} -1} \rd t \\
        &\quad 
        + |B|^{-\frac{1}{2}} \left(-\frac{1}{4\pi}\right)^{\frac{\onenorm{\balpha}}{2}} 
            \sum_{\bk\in \bbZ^d \setminus \{\bzero\}} \int_{1}^{\infty} 
            \re^{-\pi t \, \bk^\top B^{-1} \bk} H_{\balpha}(\sqrt{\pi t} \, \bk; B^{-1}) 
            \, t^{\frac{d+\onenorm{\balpha}-s}{2} - 1} \rd t,
    \end{align*}
    where $|B|$ is the determinant of $B$ and $H_{\balpha}(x; B^{-1})$
    is the multivariate Hermite polynomial with covariance matrix
    $B^{-1}$ defined in \cref{def:hermite_poly}.
\end{theorem}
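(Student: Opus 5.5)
We follow Riemann's second proof for $\zeta$, adapted to the quadratic form $\bn^\top B\bn$ and the monomial weight $\bn^{\balpha}$. For $\rep(s)>d+\onenorm{\balpha}$ we start from the Mellin identity
\begin{equation*}
    \frac{\Gamma(s/2)}{\pi^{s/2}}\,(\bn^\top B\bn)^{-s/2}=\int_0^\infty \re^{-\pi t\,\bn^\top B\bn}\,t^{\frac{s}{2}-1}\,\rd t .
\end{equation*}
Multiplying by $\bn^{\balpha}$ and summing over $\bn\neq\bzero$ is justified by absolute convergence (Tonelli), giving $\frac{\Gamma(s/2)}{\pi^{s/2}}Z^B_{\balpha}(s)=\int_0^\infty \Theta_{\balpha}(t)\,t^{s/2-1}\rd t$ with $\Theta_{\balpha}(t):=\sum_{\bn\neq\bzero}\bn^{\balpha}\re^{-\pi t\,\bn^\top B\bn}$. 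We then split the integral at $t=1$. The piece over $[1,\infty)$ is exactly the third term of the claimed identity. Because of the exponential decay of $\Theta_{\balpha}$ at infinity, that piece is entire in $s$.

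For the piece over $(0,1)$ we need a theta-type transformation. We apply Poisson summation to $f_t(\bx)=\bx^{\balpha}\re^{-\pi t\,\bx^\top B\bx}$ over the full lattice; the term $\bn=\bzero$ contributes $\delta_{0,\balpha}$. The Fourier transform of the Gaussian is $\widehat{\re^{-\pi t x^\top Bx}}(\bk)=t^{-d/2}|B|^{-1/2}\re^{-\pi \bk^\top B^{-1}\bk/t}$. Multiplication by $\bx^{\balpha}$ becomes $(\frac{\mi}{2\pi}\partial_{\bk})^{\balpha}$ on the transform. By the definition of the Hermite polynomials with covariance $B^{-1}$, namely $\partial^{\balpha}\re^{-\frac12 \by^\top B^{-1}\by}=(-1)^{\onenorm{\balpha}}H_{\balpha}(\by;B^{-1})\re^{-\frac12\by^\top B^{-1}\by}$, with the scaling $\by=\sqrt{2\pi/t}\,\bk$, this yields
\begin{equation*}
    \sum_{\bn}f_t(\bn)=|B|^{-\frac12}\Bigl(-\frac{1}{4\pi}\Bigr)^{\frac{\onenorm{\balpha}}{2}} t^{-\frac{d+\onenorm{\balpha}}{2}}\sum_{\bk}\re^{-\pi\bk^\top B^{-1}\bk/t}\,H_{\balpha}\bigl(\sqrt{\pi/t}\,\bk;B^{-1}\bigr).
\end{equation*}
Here the evenness of $\onenorm{\balpha}$ makes the factor $\mi^{\onenorm{\balpha}}$ real, and it combines with the $(2\pi)$-powers into $(-1/(4\pi))^{\onenorm{\balpha}/2}$. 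The precise normalization of $H_{\balpha}(\cdot;B^{-1})$ must be matched to \cref{def:hermite_poly}, and this routine but error-prone bookkeeping is the step I expect to be the main obstacle. The isotropic case $B=I$ of \cref{thm:zeta_function} serves as a consistency check on the constants.

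Next we substitute this into $\int_0^1(\Theta_{\balpha}(t))t^{s/2-1}\rd t$, writing $\Theta_{\balpha}=\sum_{\bn}f_t(\bn)-\delta_{0,\balpha}$. The $-\delta_{0,\balpha}$ part integrates to $-2\delta_{0,\balpha}/s$. We isolate the $\bk=\bzero$ term on the dual side, which is the constant $H_{\balpha}(0;B^{-1})$ times $t^{-(d+\onenorm{\balpha})/2}$. It integrates over $(0,1)$ to $\frac{2H_{\balpha}(0;B^{-1})}{s-d-\onenorm{\balpha}}$, valid for $\rep(s)>d+\onenorm{\balpha}$. In the remaining $\bk\neq\bzero$ sum we change variables $t\mapsto 1/t$, mapping $(0,1)$ to $(1,\infty)$. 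The factor $t^{s/2-1}\,t^{-(d+\onenorm{\balpha})/2}$ with $\rd t$ then becomes $t^{\frac{d+\onenorm{\balpha}-s}{2}-1}\rd t$, and the Hermite argument becomes $\sqrt{\pi t}\,\bk$, which is exactly the fourth term.

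Finally, every lattice sum of integrals over $[1,\infty)$ is bounded by $\re^{-c t|\bn|^2}$ times a polynomial, uniformly for $s$ in compact sets. Hence both series converge locally uniformly and define entire functions. We can reuse the tail estimate in \cref{lem:tail_estimate} here, since $\bn^\top B\bn\ge\lmin|\bn|^2$. The right-hand side therefore gives the meromorphic continuation to $\bbC$. Since $\Gamma(s/2)^{-1}\pi^{s/2}$ is entire and $1/\Gamma(s/2)$ vanishes at $s=0$, the apparent pole $-2\delta_{0,\balpha}/s$ is cancelled. The only possible pole of $Z^B_{\balpha}$ is therefore the simple one at $s=d+\onenorm{\balpha}$, and it is absent when $H_{\balpha}(0;B^{-1})=0$, which explains the phrase ``at most''.
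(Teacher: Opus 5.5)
Your proposal is correct and takes essentially the same route as the paper, whose proof defers to that of \cref{thm:zeta_function}: Mellin representation, split at $t=1$, Poisson summation for $\bx^{\balpha}$ times the anisotropic Gaussian, extraction of the $\bn=\bzero$ and $\bk=\bzero$ terms, and entire tails (applying Poisson summation before rather than after the substitution $t\mapsto 1/t$ makes no difference). One small slip: \cref{def:hermite_poly} uses $\re^{-\bx^\top A\bx}$ with no factor $\tfrac12$, so the correct step is \cref{cor:high_deri_gaussian} with $c=\pi/t$ and $A=B^{-1}$, which gives exactly your displayed transform (argument $\sqrt{\pi/t}\,\bk$, prefactor $(-\frac{1}{4\pi})^{\onenorm{\balpha}/2}$); the $\tfrac12$-convention with scaling $\sqrt{2\pi/t}$ that you quoted would instead give prefactor $(-\frac{1}{2\pi})^{\onenorm{\balpha}/2}$ and argument $\sqrt{2\pi/t}\,\bk$, for a differently normalized Hermite polynomial.
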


For numerical evaluation, the two lattice series in the functional
equation are evaluated by finite summation. As in the isotropic case,
rewriting the integral terms with upper incomplete Gamma functions
reveals exponential decay in the lattice index, and the corresponding
tail bound determines a summation cutoff for a prescribed accuracy. The
anisotropic estimate follows the same argument, and the detailed
derivation is given in \cref{app:generalized_zeta_function}.

\subsection{Extension to parallelotope domain}

We now extend the SinCoTrap method to singular integrals over a general
parallelotope with the isotropic kernel $\sigma(\bx)=1/|\bx|^s$. Let
$\Vpara$ be a parallelotope  defined as
\begin{equation} \label{eq:def_parallelotope}
    \Vpara = \Big\{ \sum_{j=1}^d c_j \bv_j \mid -1 \leq c_j \leq 1, j=1, \ldots, d \Big\},
\end{equation}
where $\{\bv_1, \ldots, \bv_d\}$ are linearly independent vectors in
$\bbR^d$. Introduce a linear transformation $\bx = T \by$ such that 
\begin{equation*}
    [\bv_1, \ldots, \bv_d] = T \cdot a [\be_1, \ldots, \be_d] = T \cdot a I_d,
\end{equation*}
where $a>0$ and $I_d$ is the $d$-by-$d$ identity matrix, such a
transformation maps the hypercube $[-a, a]^d$ to the parallelotope
$\Vpara$. Then we have
\begin{equation*}
    \int_{\Vpara} \phi(\bx) \frac{1}{|\bx|^{s}} \rd \bx 
    = |\det(T)| \int_{[-a, a]^d} \phi(T \by) \frac{1}{|\by^\top T^\top T \by|^{s/2}} \rd \by.
\end{equation*}
Thus, integration over a parallelotope with an isotropic kernel reduces
to the cube integral with anisotropic kernel $1/|\by^\top B\by|^{s/2}$
where $B := T^\top T$. Conversely, given any symmetric positive definite
matrix $B$, one may choose $T$ such that $B=T^\top T$ (e.g., Cholesky
factorization), to convert the cube integral with anisotropic kernel to
an integral over parallelotope with isotropic kernel. Therefore, the two
extensions are equivalent up to a linear change of variables.

For numerical computation, assume that each coordinate direction of the
parallelotope is discretized with the same number of points $2N+1$ with
$N \in \bbN$. We construct the grid on $\Vpara$ by mapping the uniform
grid on $[-a,a]^d$ through the linear transformation $\bx=T\by$. In
particular, for $\by_{\bi}=\bi h$ where $h=a/N$, we define the
corresponding points in $\Vpara$ by $\bx_{\bi}:=T\by_{\bi}=T(\bi h)$.
The resulting quadrature rule is
\begin{equation} \label{eq:sincotrap_parallelotope}
    Q_{\text{para}}^p\!\left[\phi \cdot \frac{1}{|\bx|^s}\right]
    = |\det(T)| \Bigl(
    h^d \sum_{0<\infnorm{\bi}\le N} c_{\bi}\, \phi(\bx_{\bi}) \frac{1}{|\bx_{\bi}|^s}
    + h^{d-s} \sum_{\bi\in\calX_p} w_{\bi}\, \phi(\bx_{\bi})
    \Bigr),
\end{equation}
where $c_{\bi}$ are the standard coefficients of trapezoidal
rule and $\{w_{\bi}\}$ are the converged SinCoTrap weights (computed
once for the associated matrix $B=T^\top T$ and parameters $d,s,p$).

\subsection{Convergence analysis}

Since the two extensions are equivalent via a linear transformation, we
only present the convergence analysis for integral with singular kernel
$1/|\bx^\top B \bx|^{s/2}$ over the hypercube domain. The results and
their proofs largely parallel to those for the isotropic case, though the
anisotropy introduced by the matrix $B$ entails more intricate notation
and computations. Below, we state the main theorems, highlighting the
essential modifications required to accommodate the anisotropic setting.

We begin with the basic cancellation property implied by central
symmetry.

\begin{lemma}[Odd total-degree cancellation] \label{lem:odd_l1_cancellation}
    Given multi-index $\balpha \in \bbN^d$ with $\onenorm{\balpha}$
    being odd, and smooth radial function $g(\bx)=g(|\bx|)$ being
    compactly supported within $V=[-a, a]^d$, if the correction stencil
    $\calX_p$ and weights $\{w_{\bi}\}$ satisfy central symmetry, i.e.,
    $\bi\in\calX_p\Rightarrow -\bi\in\calX_p$ and $w_{-\bi}=w_{\bi}$,
    then for $\sigma_B(\bx) = |\bx^\top B \bx|^{-s/2}$,
    \begin{equation*}
        I[g \cdot \sigma_B \cdot \bx^{\balpha}] = 0, 
        \qquad \text{and} \qquad
        Q_h^p[g \cdot \sigma_B \cdot \bx^{\balpha}] = 0.
    \end{equation*}
\end{lemma}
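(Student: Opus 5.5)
The plan is to copy the proof of \cref{lem:odd_component_cancellation}, replacing the single-coordinate reflection $x_1\mapsto -x_1$ by the full point reflection $\bx\mapsto-\bx$. The key identity is that for any multi-index with $\onenorm{\balpha}$ odd,
\begin{equation*}
    (-\bx)^{\balpha}=(-1)^{\onenorm{\balpha}}\bx^{\balpha}=-\bx^{\balpha},
\end{equation*}
while $g(-\bx)=g(\bx)$ because $g$ is radial, and $\sigma_B(-\bx)=\sigma_B(\bx)$ because $(-\bx)^\top B(-\bx)=\bx^\top B\bx$. So the full integrand $F(\bx):=g(\bx)\sigma_B(\bx)\bx^{\balpha}$ is odd on $\bbR^d\setminus\{0\}$, and each of the three pieces of $Q_h^p$ inherits this oddness.

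\textbf{Exact integral.} The kernel $\sigma_B$ is locally integrable, since $\bx^\top B\bx\geq\lmin(B)|\bx|^2$ and $0<s<d$. Hence $F$ is absolutely integrable on $V=[-a,a]^d$. The domain $V$ is invariant under $\bx\mapsto-\bx$, a measure-preserving map. Changing variables therefore gives $I[F]=I[F\circ(-\mathrm{id})]=-I[F]$, so $I[F]=0$.

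\textbf{Punctured trapezoidal sum.} The grid $\{\bi:0<\infnorm{\bi}\le N\}$ is invariant under $\bi\mapsto-\bi$. The trapezoidal weights satisfy $c_{-\bi}=c_{\bi}$, since $m(\bi)$ depends only on the $|i_j|$. No grid point is fixed by the reflection, because the origin is excluded. Pairing $\bi$ with $-\bi$ then gives $c_{\bi}F(\bi h)+c_{-\bi}F(-\bi h)=0$, hence $T_h^0[F]=0$. In fact $g$ is supported inside $V$, so only interior nodes with $c_{\bi}=1$ contribute anyway.

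\textbf{Correction term.} The term is $h^{d-s}\sum_{\bi\in\calX_p}w_{\bi}(\bi h)^{\balpha}g(\bi h)$. I would split the stencil into the origin and the symmetric pairs $\{\bi,-\bi\}$.
\begin{itemize}
    \item At the origin, $\bzero^{\balpha}=0$ because $\onenorm{\balpha}\geq1$.
    \item For a pair, we have $w_{-\bi}=w_{\bi}$ and $g(-\bi h)=g(\bi h)$. Combined with $(-\bi h)^{\balpha}=-(\bi h)^{\balpha}$, the two terms cancel.
\end{itemize}
Equivalently, write the sum as $\tfrac12\sum_{\bi\in\calX_p}w_{\bi}g(\bi h)\bigl[(\bi h)^{\balpha}+(-\bi h)^{\balpha}\bigr]$, using that $\calX_p=-\calX_p$. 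Adding the three pieces gives $Q_h^p[F]=0$.

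There is no real obstacle here. The only points needing care are that the improper integral converges absolutely, so the change of variables is legitimate, and that the correction sum must be reindexed over the whole centrally symmetric stencil rather than over the representative set $\calX_p^+$.
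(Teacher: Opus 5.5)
Your proposal is correct and follows the paper's proof essentially verbatim: the paper defines the same odd function $F(\bx)=g(\bx)\sigma_B(\bx)\bx^{\balpha}$, symmetrizes the integral over the centrally symmetric $V$, and pairs $\bi$ with $-\bi$ in both the punctured trapezoidal sum (using $c_{-\bi}=c_{\bi}$) and the correction sum (using $\calX_p=-\calX_p$ and $w_{-\bi}=w_{\bi}$). Your additional checks of absolute integrability via $\bx^\top B\bx\geq\lmin|\bx|^2$ and of the vanishing origin term are harmless refinements that the paper leaves implicit.
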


\begin{proof}
    Define $F(\bx):=g(\bx)\sigma_B(\bx)\bx^{\balpha}$. The radiality of
    $g$ and the central symmetry of $\sigma_B$ imply that
    $g(-\bx)=g(\bx)$ and $\sigma_B(-\bx)=\sigma_B(\bx)$. Moreover,
    since $\onenorm{\balpha}$ is odd,
    \begin{equation*}
        (-\bx)^{\balpha}
        =(-1)^{\onenorm{\balpha}}\bx^{\balpha}
        =-\bx^{\balpha}.
    \end{equation*}
    Hence $F(-\bx)=-F(\bx)$. Because $V$ is centrally symmetric,
    \begin{equation*}
        I[g\cdot\sigma_B\cdot\bx^{\balpha}]
        = \int_V F(\bx)\,\rd\bx
        = \frac12\int_V\bigl(F(\bx)+F(-\bx)\bigr)\,\rd\bx
        = 0.
    \end{equation*}
    For the punctured trapezoidal sum, the index set
    $\{\bi\in\bbZ^d:0<\infnorm{\bi}\le N\}$ is centrally symmetric and
    $c_{-\bi}=c_{\bi}$, hence terms cancel in pairs:
    \begin{equation*}
        T_h^0[g\cdot\sigma_B\cdot\bx^{\balpha}]
        = \frac{h^d}{2}\sum_{0<\infnorm{\bi}\le N} c_{\bi}\Bigl(F(\bi h)+F(-\bi h)\Bigr)=0.
    \end{equation*}
    Likewise, central symmetry of $\calX_p$ and $w_{-\bi}=w_{\bi}$ gives
    \begin{equation*}
        h^{d-s}\sum_{\bi\in\calX_p} w_{\bi}\,g(\bi h)\,(\bi h)^{\balpha}
        = \frac{h^{d-s}}{2}\sum_{\bi\in\calX_p} w_{\bi}\Bigl(g(\bi h)(\bi h)^{\balpha}+g(-\bi h)(-\bi h)^{\balpha}\Bigr)=0.
    \end{equation*}
    Therefore $Q_h^p[g\cdot\sigma_B\cdot\bx^{\balpha}]=0$.
\end{proof}

\begin{theorem} \label{thm:error_order_anisotropic}
    Let $V=[-a, a]^d \subseteq \bbR^d$ and consider the singular
    integral
    \begin{equation*}
        I[\phi \cdot \sigma_B]
        = \int_{V} \phi(\bx)\sigma_B(\bx)\,\rd \bx,
        \qquad
        \sigma_B(\bx) = \frac{1}{|\bx^\top B \bx|^{s/2}},
        \qquad 0 < s < d,
    \end{equation*}
    where $B\in\bbR^{d\times d}$ is symmetric positive definite and
    $\phi$ is smooth. Assume that either $\phi$ is compactly supported
    in $V$ or $\phi\cdot\sigma_B$ is periodic on $V$ and is smooth
    expect at the singular points. Let $Q_h^p$ be the singularity corrected
    trapezoidal rule constructed using the centrally symmetric stencil
    and the associated converged weights:
    \begin{equation*}
        Q^p_h[\phi \cdot \sigma_B] = T^0_h[\phi \cdot \sigma_B] + \calC_{h}^p[\phi], \quad 
        \calC_{h}^p[\phi] = h^{d-s} \sum_{\bi \in \calX_p} w_{\bi} \, \phi(\bi h).
    \end{equation*}
    Then as $h\to0^+$ we have
    \begin{equation*}
        I[\phi \cdot \sigma_B] = Q^p_h[\phi \cdot \sigma_B] + O(h^{2p+2+d-s}).
    \end{equation*}
\end{theorem}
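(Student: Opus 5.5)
The proof mirrors that of \cref{thm:error_order_compact_fun}, with the isotropic ingredients replaced by their anisotropic analogues. We first treat $\phi$ compactly supported in $V$, and afterwards reduce the periodic case to it with the cutoff $\eta$, exactly as in the corollary following \cref{thm:error_order_compact_fun}.

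\textbf{Decomposition.} Take the radial window $g$ and the degree-$(2p+1)$ Taylor surrogate $\tphi=P_\phi g$ as before. Split the error as
\begin{equation*}
E_1+E_2+E_3=(I-T_h^0)[(\phi-\tphi)\sigma_B]+\calC_h^p[\tphi-\phi]+(I-Q_h^p)[\tphi\,\sigma_B].
\end{equation*}
The term $E_2$ is handled verbatim: $|\tphi-\phi|\le C|\bx|^{2p+2}$, the stencil is fixed and the weights are bounded, so $|E_2|\le Ch^{2p+2+d-s}$.

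\textbf{The term $E_1$.} Homogeneity of $\sigma_B$ of degree $-s$ gives
\begin{equation*}
E_1=h^{d-s}\Bigl(\int f(\bx h)\sigma_B(\bx)\,\rd\bx-\sum_{\bi\ne0} f(\bi h)\sigma_B(\bi)\Bigr), \qquad f=\phi-\tphi .
\end{equation*}
We need the anisotropic analogue of \cref{lem:error_order_rhs}. I would prove it by the same route as the isotropic lemma. Apply Poisson summation to the smooth-away-from-zero function $f(\bx h)\sigma_B(\bx)$. The $\bk\ne0$ Fourier terms are controlled by the known Fourier transform of the homogeneous kernel $\sigma_B$, which is $c\,|\bx^\top B^{-1}\bx|^{(s-d)/2}$, convolved with the rescaled $\hat f$. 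The vanishing of $f$ to order $2p+1$ at the origin then yields the $O(h^{2p+2})$ bound with constant term $f(0)b=0$.

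Alternatively, the change of variables $\bx=T\by$ with $B=T^\top T$ turns the integral into an isotropic one on the lattice $T\bbZ^d$. Since the earlier proof only uses lattice Poisson summation and homogeneity, it carries over to a general lattice. This step, establishing the anisotropic RHS lemma, is the main obstacle, since the paper's isotropic version is deferred to the appendix. I would state it as a lemma whose proof repeats that argument with $|\cdot|$ replaced by the $B$-norm and the dual lattice norm by the $B^{-1}$-norm.

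\textbf{The term $E_3$.} Expand $\tphi=P_\phi g$. By \cref{lem:odd_l1_cancellation}, all monomials of odd total degree contribute zero. The remaining monomials have even total degree $\onenorm{\balpha}\le 2p$, and these are exactly the moment constraints. Subtracting the moment identity (with weights $w^h$) from the definition of $Q_h^p$ (with weights $w$) gives
\begin{equation*}
(I-Q_h^p)[\bx^{\balpha} g\,\sigma_B]=h^{d-s}\sum_{\bi\in\calX_p}(w_{\bi}^h-w_{\bi})(\bi h)^{\balpha}g(\bi h).
\end{equation*}
It remains to show $|w^h-w|=O(h^{2p+2})$. This follows as in \cref{lem:error_order_weights} from three facts: the system $AG^hw^h=b^h$, the bound $\|I-G^h\|\le Ch^{2p+2}$, the RHS rate supplied by the anisotropic lemma above, and the invertibility of the matrix $A$ in \eqref{eq:matrix_of_scaled_zeta}.

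\textbf{Invertibility of $A$.} For the specific stencil $\calX_p^+$, I would argue as follows. After the central pairing, $A_{jk}=2\bbeta_k^{\balpha_j}$ for even $\onenorm{\balpha_j}$; at the origin the entry is $0^{\balpha}$, which equals $1$ when $\balpha=0$ and $0$ otherwise. This is a multivariate Vandermonde-type matrix on the points $\calX_p^+$ against the even-degree monomials. Nonsingularity can be argued by unisolvence, or taken as the standing assumption stated in the remark following the stencil definition, where it is checkable exactly in integer arithmetic. Combining the three bounds gives $O(h^{2p+2+d-s})$. In the periodic case, $(1-\eta)\phi\sigma_B$ is smooth and periodic, so its trapezoidal error is super-algebraic, which finishes the proof.
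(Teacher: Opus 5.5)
Your proposal is correct and is essentially the paper's proof, which reruns the $E_1+E_2+E_3$ argument of \cref{thm:error_order_compact_fun} with \cref{lem:odd_l1_cancellation} in place of the componentwise cancellation; like you, the paper treats nonsingularity of the anisotropic moment matrix as a checkable hypothesis rather than proving it. For the anisotropic version of \cref{lem:error_order_rhs}, commit to your second route: the paper's cutoff-plus-integration-by-parts proof carries over verbatim on $\bbZ^d$, because it only needs the homogeneity bounds $|\partial_j^m(\bx^{\balpha}\sigma_B(\bx))|\le C|\bx|^{\onenorm{\balpha}-s-m}$ for $\bx\neq\bzero$, whereas applying Poisson summation directly to $f(\bx h)\sigma_B(\bx)$ is not justified when $s>2p+2$ (allowed once $d\ge 3$), since that function is then unbounded at the origin.
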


\begin{proof}
    The argument is similar to the proof of
    \cref{thm:error_order_compact_fun} after replacing
    $\sigma(\bx)=|\bx|^{-s}$ by $\sigma_B(\bx)=|\bx^\top B \bx|^{-s/2}$.
    The only change is that the componentwise odd-moment cancellation
    (\cref{lem:odd_component_cancellation}) is replaced by the odd
    total-degree cancellation (\cref{lem:odd_l1_cancellation}).
\end{proof}

\section{Numerical results} \label{sec:numerical_results}

This section validates the predicted convergence rate of SinCoTrap on a
three-dimensional test from electronic structure calculations involving
the singular kernel $\sigma(\bx)=1/|\bx|^{2}$ ($d=3$, $s=2$), which
arises in the computation of exchange energy. We report the
relative error against a high-accuracy reference value and estimate
rates from log--log slopes.

\subsection{Example 1: isotropic cube}

Consider the following singular integral over a cube domain $V=[-2,
2]^3$:
\begin{equation*}
    \int_{[-2, 2]^3} \phi(\bx) \frac{1}{|\bx|^2} \rd \bx, \quad 
    \phi(\bx) = \phi(|\bx|) = 
    \begin{cases}
        \exp(\frac{4}{|\bx|^2 - 1}), & |\bx| < 1, \\
        0, & |\bx| \geq 1.
    \end{cases}
\end{equation*}
$\phi(\bx)$ is smooth and compactly supported in domain $V$. Note that
the singularity can be canceled by a spherical transformation, i.e.,
\begin{equation} \label{eq:reference_value_example_1}
    \int_{[-2, 2]^3} \phi(\bx) \frac{1}{|\bx|^2} \rd \bx 
    = 4 \pi \int_{0}^1 \phi(r) \rd r 
    = 2 \pi \int_{-1}^1 \phi(r) \rd r.
\end{equation}
Thus, we compute the reference value by a simple 1D trapezoidal rule
with a very fine mesh.

As established in \cref{thm:error_order_compact_fun}, SinCoTrap achieves
a convergence rate of $O(h^{2p+3})$ for $d=3$, $s=2$. Our numerical
experiments employ correction orders $p=0, 1, 2$ for a sequence of mesh
sizes $h$, with expected convergence rates of $O(h^3)$, $O(h^5)$, and
$O(h^7)$, respectively. After imposing the sign symmetry
\eqref{eq:symmetric_condition}, the corresponding numbers of
independent correction weights are $N_w=1,4,10$ for $p=0,1,2$,
respectively.

The left panel of \cref{fig:convergence_order} confirms the predicted
convergence rates $O(h^3)$, $O(h^5)$, and $O(h^7)$ for $p=0,1,2$,
respectively, whereas the uncorrected rule is only first-order accurate.
These results demonstrate that SinCoTrap effectively compensates for the
singularity and recovers the predicted high-order convergence using only
a few correction weights localized near the singular point.

\subsection{Example 2: parallelotope (anisotropic extension)}

We consider the parallelotope $\Vpara$ in $\bbR^3$ defined by
\eqref{eq:def_parallelotope} with generating vectors
\begin{equation*}
    \bv_1 = \begin{pmatrix} 2 & 0 & 0 \end{pmatrix}^\top, \quad 
    \bv_2 = \begin{pmatrix} 0 & 2 & 0 \end{pmatrix}^\top, \quad 
    \bv_3 = \begin{pmatrix} 0 & 2\cos \frac{\pi}{3} & 2\sin \frac{\pi}{3} \end{pmatrix}^\top,
\end{equation*}
and evaluate the singular integral
\begin{equation*}
    \int_{\Vpara} \phi(\bx) \frac{1}{|\bx|^2} \rd \bx, \quad 
    \phi(\bx) = \phi(|\bx|) = 
    \begin{cases}
        \exp(\frac{4}{|\bx|^2 - 1}), & |\bx| < 1, \\
        0, & |\bx| \geq 1,
    \end{cases}
\end{equation*}
The support of $\phi$ is contained in the unit ball, which lies inside
$\Vpara$. Thus, the reference value is obtained via a spherical
transformation as in \eqref{eq:reference_value_example_1}. To discretize
the integral and apply the quadrature rule
\eqref{eq:sincotrap_parallelotope}, we introduce the linear
transformation $\bx=T\by$, where $(\bv_1,\bv_2,\bv_3)=T(2I)$, which maps
the reference cube $[-2,2]^3$ onto $\Vpara$. We discretize each
reference coordinate with $2N+1$ points and parametric step size
$h=\frac{2}{N}$. Thus, the horizontal axis in the right panel of
\cref{fig:convergence_order} represents the spacing $h$ in the reference
coordinates $\by$; the corresponding physical grid increments are
$T(h\be_j)$.

For $d=3$ and $s=2$, \cref{thm:error_order_anisotropic} predicts an
error of order $O(h^{2p+3})$. We again use $p=0,1,2$, for which the
expected rates are $O(h^3)$, $O(h^5)$, and $O(h^7)$, respectively.
Because the transformed kernel is anisotropic, the moment system
includes every multi-index of even total degree up to $2p$. The
corresponding numbers of independent correction weights are $N_w=1,7,22$
for $p=0,1,2$, respectively.

\begin{figure}[htbp]
    \centering
    \subfigure[Isotropic kernel in 3D.]{
        \includegraphics[width=0.45\textwidth]{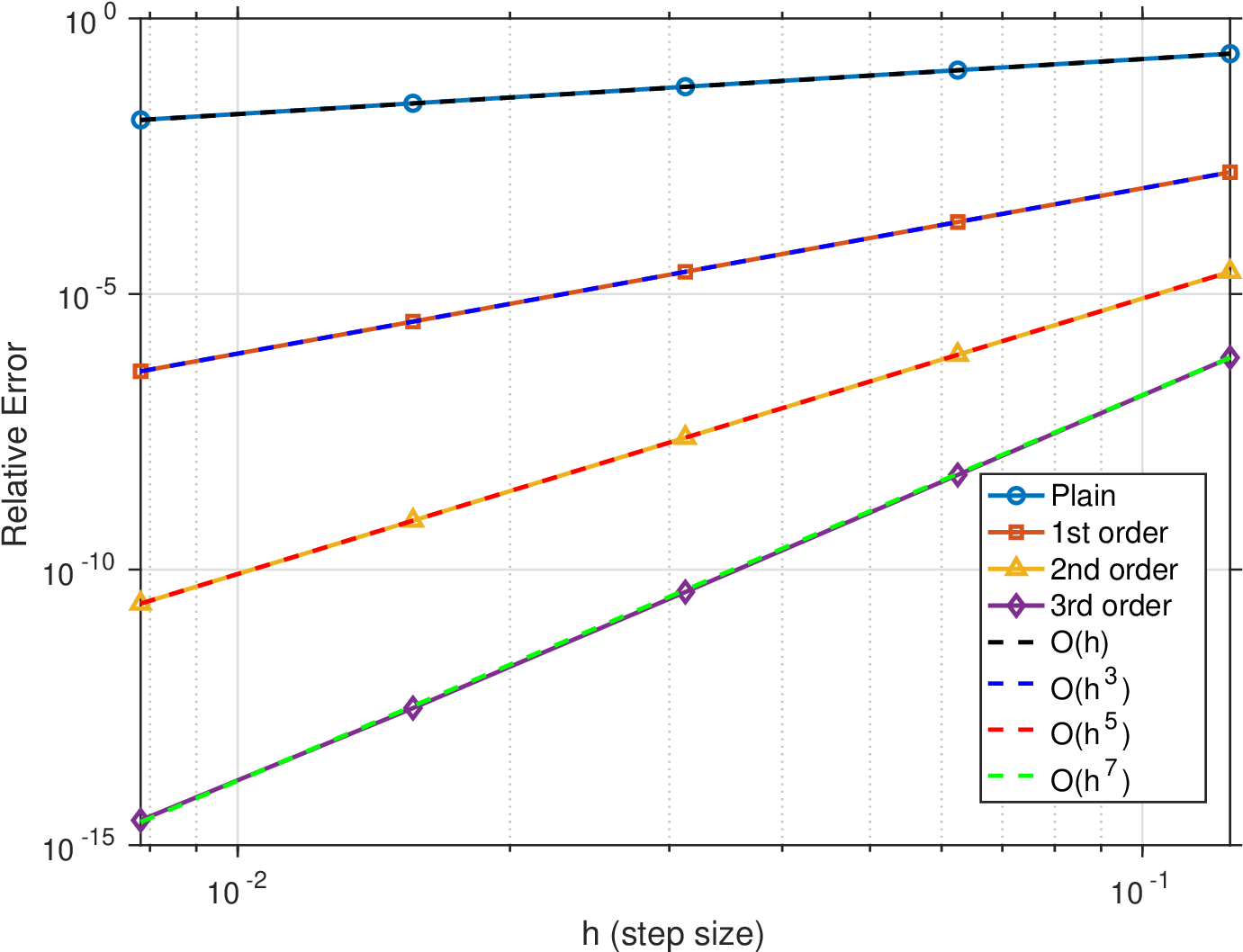}
        \label{fig:convergence_order_isotropic}
    }
    \hfill
    \subfigure[Anisotropic kernel (parallelotope case) in 3D.]{
        \includegraphics[width=0.45\textwidth]{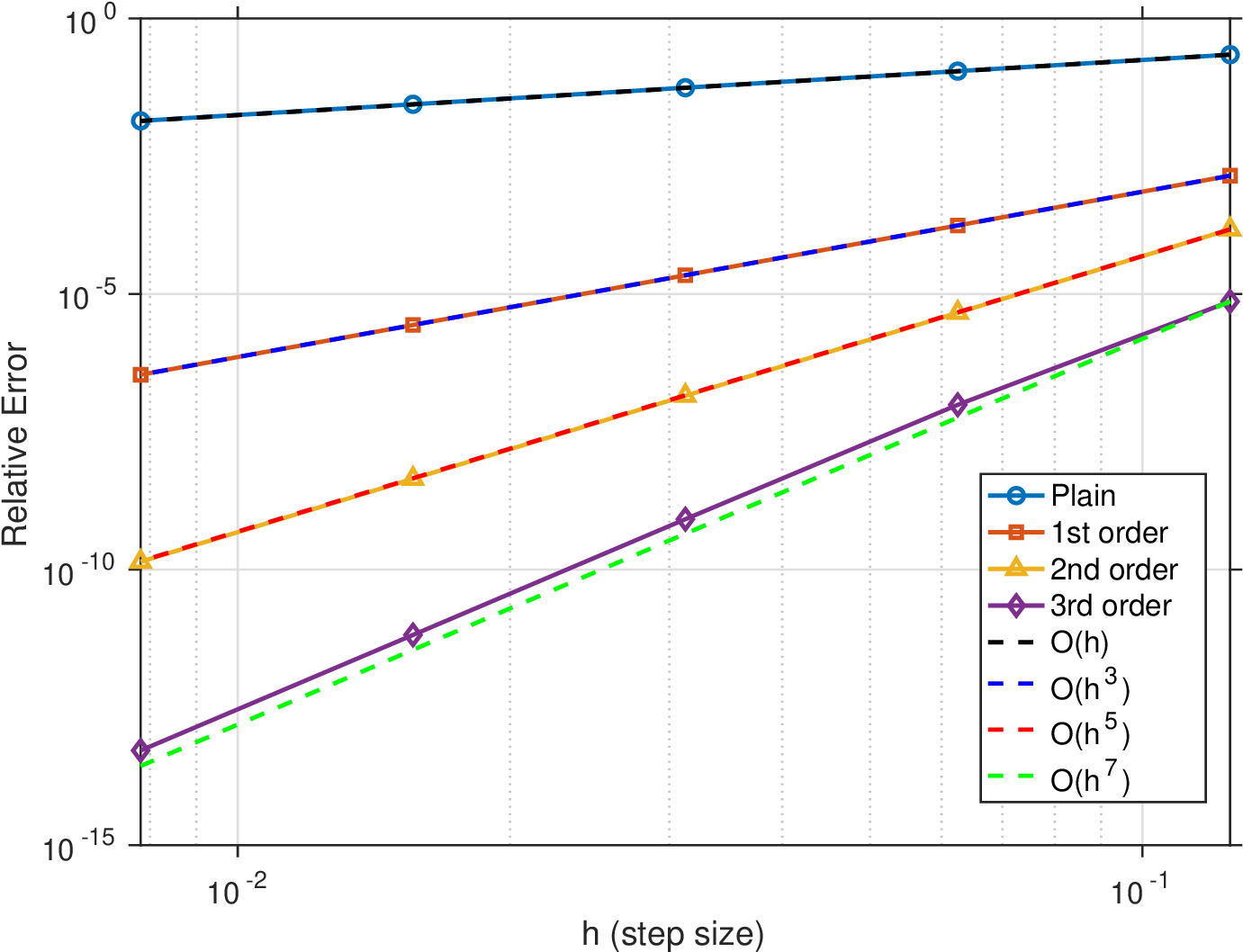}
        \label{fig:convergence_order_anisotropic}
    }
    \caption{Convergence rates for SinCoTrap with (left) isotropic and
    (right) anisotropic singular kernels, demonstrating the expected
    $O(h^{2p+3})$ scaling for $p=0,1,2$ (here $d=3$, $s=2$). In the
    legends, ``1st order,'' ``2nd order,'' and ``3rd order'' refer to
    the correction levels $p=0,1,2$, respectively.}
    \label{fig:convergence_order}
\end{figure}

The right panel of \cref{fig:convergence_order} confirms the predicted
convergence rates $O(h^3)$, $O(h^5)$, and $O(h^7)$ for $p=0,1,2$,
respectively, whereas the uncorrected rule is only first-order accurate.
Thus, in the anisotropic setting, SinCoTrap retains its designed
high-order convergence using only a few correction weights localized
near the singular point.

\section{Conclusion and discussion} \label{sec:conclusion}

In this paper, we have developed a locally singularity corrected
trapezoidal rule (SinCoTrap) for evaluating singular integrals in
arbitrary dimensions, with rigorous theoretical analysis and an
efficient computational scheme. Our method preserves the simplicity of
the standard trapezoidal rule while achieving high-order accuracy
through local corrections applied in a small neighborhood of the
singularity. Both theoretical proofs and numerical experiments confirm
the scheme's effectiveness, demonstrating significantly improved
convergence rates. A key insight of this work is the connection between
the correction weights and generalized zeta functions, which elucidates
the underlying mathematical structure of the method.

While our analysis assumes the integrand is either compactly supported
or periodic, the quadrature rule can be applied to more general
functions. In such cases, to maintain high-order accuracy, it must be
combined with a boundary correction scheme \cite{fornberg2021improving,
kapur1997high} for the trapezoidal rule.

Several directions for future research emerge from this work. First,
extension to more general singular kernels beyond the power-law form
$1/|\bx|^s$ would broaden the applicability of the method. Second,
investigation of optimal stencil shapes and sizes for specific
applications could further improve computational efficiency. Third,
generalization to nearly singular integrals, where the singularity is
close to but not within the integration domain, presents an interesting
challenge. Finally, exploring applications in various scientific and
engineering fields, such as computational chemistry, fluid dynamics and
electromagnetics, could demonstrate the practical utility of the
proposed quadrature scheme.

\clearpage

\appendix

\section*{Appendices}
\addcontentsline{toc}{section}{Appendices}

This appendix contains the proofs and technical derivations deferred
from the main text, including the convergence order analysis, proofs of
the nonsingularity of the moment-matching matrices, properties of the
right-hand sides of the moment-matching system, and a discussion of the
generalized zeta functions.

\section{Preliminaries for the Appendix}
\label{app:preliminaries}

For reference, we fix the Fourier transform convention 
\begin{equation*}
    \mathcal{F}[f](\bxi) = \hf(\bxi) 
    := \int_{\bbR^d} f(\bx)\, \re^{-2\pi\mi \,\bx \cdot \bxi}\, \rd \bx.
\end{equation*}
We will repeatedly use the Poisson summation formula, which connects
lattice sums with Fourier series and is central to both the error
analysis and the analytic continuation of the zeta functions.


\begin{lemma}[Poisson summation \cite{grafakos2008classical}]
    \label{lem:poisson_summation}
    Let $f$ be a function in Schwartz space $\mathcal{S}(\bbR^d)$. Then
    we have
    \begin{equation*}
        \sum_{\bi \in \bbZ^d} f(\bi) = \sum_{\bk \in \bbZ^d} \hf(\bk).
    \end{equation*}
\end{lemma}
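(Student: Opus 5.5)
We plan to prove the formula by periodization: form the $\bbZ^d$-periodic function
\begin{equation*}
    F(\bx) := \sum_{\bi \in \bbZ^d} f(\bx + \bi), \qquad \bx \in \bbR^d,
\end{equation*}
and compare its value at $\bx=\bzero$ with its Fourier series. The left-hand side of the claim is exactly $F(\bzero)$, so the task reduces to showing that $F$ equals its Fourier series pointwise and that its Fourier coefficients are $\hf(\bk)$.

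\textbf{Step 1: regularity of the periodization.}
Since $f\in\mathcal{S}(\bbR^d)$, for every multi-index $\balpha$ we have $|\partial^{\balpha} f(\by)|\le C_{\balpha}(1+|\by|)^{-d-1}$. On the compact set $\bx\in[0,1]^d$ this gives a summable majorant $C_{\balpha}'(1+|\bi|)^{-d-1}$ for $\partial^{\balpha}f(\bx+\bi)$. By the Weierstrass M-test, the series for $F$ and all of its termwise derivatives converge absolutely and uniformly on $[0,1]^d$, and hence on $\bbR^d$ by periodicity. Therefore $F\in C^\infty(\bbT^d)$.

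\textbf{Step 2: Fourier coefficients.}
For $\bk\in\bbZ^d$, uniform convergence allows us to exchange the sum and the integral:
\begin{equation*}
    \int_{[0,1]^d} F(\bx)\re^{-2\pi\mi\,\bk\cdot\bx}\,\rd\bx
    = \sum_{\bi\in\bbZ^d}\int_{[0,1]^d+\bi} f(\by)\re^{-2\pi\mi\,\bk\cdot\by}\,\rd\by
    = \hf(\bk).
\end{equation*}
Here we use $\re^{-2\pi\mi\,\bk\cdot\bi}=1$, and the translated unit cubes tile $\bbR^d$ up to null sets.

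\textbf{Step 3: pointwise convergence of the Fourier series.}
Since $\hf\in\mathcal{S}(\bbR^d)$, the coefficients satisfy $\sum_{\bk}|\hf(\bk)|<\infty$. Thus the series $\sum_{\bk}\hf(\bk)\re^{2\pi\mi\,\bk\cdot\bx}$ converges absolutely and uniformly to a continuous periodic function $G$ with the same Fourier coefficients as $F$. Alternatively, the rapid decay follows directly from the smoothness of $F$ by integrating by parts on the torus.

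Uniqueness of Fourier coefficients for continuous functions on $\bbT^d$ then gives $F\equiv G$. This uniqueness follows from the density of trigonometric polynomials in $C(\bbT^d)$, e.g.\ via Fej\'er kernels or Stone--Weierstrass. Evaluating at $\bx=\bzero$ yields
\begin{equation*}
    \sum_{\bi\in\bbZ^d} f(\bi) = F(\bzero) = G(\bzero) = \sum_{\bk\in\bbZ^d}\hf(\bk).
\end{equation*}

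\textbf{Main obstacle.}
The only nontrivial ingredient is Step 3, namely the identification of a continuous periodic function with its absolutely convergent Fourier series. The plan is to cite the uniqueness theorem from \cite{grafakos2008classical} rather than reprove it. All the other steps are justified routinely by the uniform decay bounds from Step 1.
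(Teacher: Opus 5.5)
Your periodization argument is correct and is the standard proof of this result. The paper gives no proof of its own and cites Grafakos, where the formula is proved the same way: periodize, identify the Fourier coefficients of $F$ as $\hf(\bk)$, and use uniqueness of Fourier coefficients for continuous functions on the torus.
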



\section{Proofs for convergence order analysis}

\begin{lemma} \label{lem:linfty_shell_count}
    Given $n\in\bbN_+$, define the $\ell_\infty$-shell $S_n :=
    \{\bk\in\bbZ^d : \infnorm{\bk}=n\}$. Then the number of lattice
    points in $S_n$ is
    \footnote{Given any set $A$, we use $\# A$ to denote the number of elements in $A$.}
    \begin{equation*}
        \#S_n = (2n+1)^d-(2n-1)^d \le 2d\,3^{d-1}\,n^{d-1}.
    \end{equation*}
\end{lemma}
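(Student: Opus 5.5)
The count follows by viewing $S_n$ as the difference of two nested $\ell_\infty$-balls. The inequality then comes from the mean value theorem applied to $t\mapsto t^d$.

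\emph{Exact count.} The closed ball $\{\bk\in\bbZ^d:\infnorm{\bk}\le n\}$ is the product $\{-n,\ldots,n\}^d$, so it has $(2n+1)^d$ points. The ball $\{\infnorm{\bk}\le n-1\}$ has $(2n-1)^d$ points; for $n=1$ this ball is $\{\bzero\}$ and $1^d=1$, so the formula still holds. Since $S_n$ is the set difference of these two nested sets,
\begin{equation*}
    \#S_n=(2n+1)^d-(2n-1)^d .
\end{equation*}

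\emph{Upper bound.} By the mean value theorem for $f(t)=t^d$ on $[2n-1,2n+1]$, there is some $\xi\in(2n-1,2n+1)$ with
\begin{equation*}
    (2n+1)^d-(2n-1)^d = 2\,d\,\xi^{d-1}\le 2d\,(2n+1)^{d-1}.
\end{equation*}
Since $n\ge1$, we have $2n+1\le 3n$, hence $(2n+1)^{d-1}\le 3^{d-1}n^{d-1}$. Combining the two bounds gives
\begin{equation*}
    \#S_n\le 2d\,3^{d-1}n^{d-1}.
\end{equation*}
The case $d=1$ is consistent, since then both sides equal $2$.

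There is no real obstacle here. The only points needing care are the $n=1$ boundary case of the inner ball and the requirement $n\ge1$, which is what justifies $2n+1\le 3n$.
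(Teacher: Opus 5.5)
Your proposal is correct and follows the paper's proof essentially verbatim. You count nested $\ell_\infty$-cubes, apply the mean value theorem to $t^d$ on $(2n-1,2n+1)$, and use $2n+1\le 3n$; your extra checks of the $n=1$ inner ball and the $d=1$ case are harmless additions.
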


\begin{proof}
    The number of lattice points $\#S_n$ is the difference of number of
    lattice points in nested cubes, i.e.,
    \begin{equation*}
        \#\{\bk\in\bbZ^d:\infnorm{\bk}\le n\}=(2n+1)^d,
        \quad
        \#\{\bk\in\bbZ^d:\infnorm{\bk}\le n-1\}=(2n-1)^d.
    \end{equation*}
    For the upper bound, apply the mean value theorem to $f(x)=x^d$ on
    the interval $(2n-1,2n+1)$, then there exists $\xi\in(2n-1,2n+1)$
    such that
    \begin{equation*}
        (2n+1)^d-(2n-1)^d = 2 f'(\xi) = 2d\,\xi^{d-1} 
        \leq 2d(2n+1)^{d-1} \leq 2d\,3^{d-1}\,n^{d-1}.
    \end{equation*}
\end{proof}

\begin{lemma} \label{lem:drv_x_alpha_s} 
    For $q\in \bbN$, $s\in \bbC \setminus \{0\}$, $\balpha \in \bbN^d$
    and $0 \neq \bx = (x_1, \ldots, x_d) \in \bbR^d$, we have
    \begin{equation*}
        \frac{\partial^q}{\partial x_j^q} \left( \frac{\bx^{2\balpha}}{|\bx|^s} \right) 
        = \frac{1}{|\bx|^{q+s-2\onenorm{\balpha}}} P_{q, \balpha,j}\left( \frac{\bx}{|\bx|} \right), 
        \quad j=1, \ldots, d,
    \end{equation*}
    where $P_{q, \balpha,j}(\bx)$ is a polynomial with degree
    $\mathrm{deg} P_{q, \balpha,j} \leq q + 2\onenorm{\balpha}$ in
    variable $\bx$. Furthermore, the coefficients of $P_{q,
    \balpha,j}(\bx)$ depend polynomially on the parameter $s$.
\end{lemma}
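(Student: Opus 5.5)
We argue by induction on $q$, proving a slightly stronger statement so that the induction closes: for any polynomial $P$ of degree at most $m$, with coefficients polynomial in $s$, and any real exponent shift $r$ (affine in $s$), the derivative $\partial_{x_j}$ of $|\bx|^{-r}P(\bx/|\bx|)$ is again of the form $|\bx|^{-(r+1)}\tilde P(\bx/|\bx|)$. Here $\tilde P$ has degree at most $m+1$, and its coefficients are polynomial in $s$. Granting this, the base case $q=0$ reads
\begin{equation*}
\frac{\bx^{2\balpha}}{|\bx|^s} = \frac{1}{|\bx|^{s-2\onenorm{\balpha}}}\Bigl(\frac{\bx}{|\bx|}\Bigr)^{2\balpha},
\end{equation*}
so $P_{0,\balpha,j}(\bx)=\bx^{2\balpha}$, which has degree $2\onenorm{\balpha}$ and constant coefficients. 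Each further derivative raises the exponent by one and the degree by at most one, which gives the claimed form $|\bx|^{-(q+s-2\onenorm{\balpha})}P_{q,\balpha,j}(\bx/|\bx|)$ with $\deg P_{q,\balpha,j}\le q+2\onenorm{\balpha}$.

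To prove the inductive step, write $\bu=\bx/|\bx|$ and use the elementary identities
\begin{equation*}
\partial_{x_j}|\bx|^{-r} = -r\,u_j\,|\bx|^{-r-1},
\qquad
\partial_{x_j} u_k = \frac{\delta_{jk}-u_ju_k}{|\bx|}.
\end{equation*}
By the chain rule,
\begin{equation*}
\partial_{x_j}\bigl(|\bx|^{-r}P(\bu)\bigr) = |\bx|^{-r-1}\Bigl(-r\,u_jP(\bu) + \sum_k (\partial_kP)(\bu)\,(\delta_{jk}-u_ju_k)\Bigr).
\end{equation*}
Define $\tilde P(\bu)$ to be the bracketed expression. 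The term $u_jP$ has degree at most $m+1$, and $\partial_kP$ has degree at most $m-1$, so $(\partial_kP)(\delta_{jk}-u_ju_k)$ has degree at most $m+1$. Hence $\deg\tilde P\le m+1$. The only new coefficient introduced is the factor $-r$, which is affine in $s$; since the other coefficients are polynomial in $s$ by hypothesis, the coefficients of $\tilde P$ remain polynomial in $s$.

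No step here is a genuine obstacle. The only point requiring care is that we must not simplify using $|\bu|=1$ in a way that would break the polynomial structure; keeping $\tilde P$ as a formal polynomial in $\bu$ avoids this. The restriction $s\neq 0$ is not needed for the argument. Smoothness away from the origin is clear because $\bx\neq0$ throughout.
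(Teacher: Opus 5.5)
Your proposal is correct and follows essentially the paper's route: induction on $q$ via the chain rule. Your bracketed expression $\tilde P$, with $r=q+s-2\onenorm{\balpha}$, coincides term by term with the paper's recursion for $P_{q+1,\balpha,j}$. One small wording fix: $r$ is complex rather than real, since $s\in\bbC$. Your identity $\partial_{x_j}|\bx|^{-r}=-r\,u_j|\bx|^{-r-1}$ still holds verbatim because $|\bx|^{-r}=\exp(-r\log|\bx|)$ with a real logarithm, which is exactly the remark the paper makes at the start of its proof.
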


\begin{proof}
    Since $|\bx|>0$, for complex $s$ we have
    $|\bx|^{-s}:=\exp(-s\log|\bx|)$, where $\log|\bx|$ is real, so no
    choice of complex logarithm branch is involved. Differentiation with
    respect to the real variable $x_j$ therefore gives 
    \begin{equation*}
        \frac{\partial}{\partial x_j}|\bx|^{-s}
        =-s x_j|\bx|^{-s-2},
    \end{equation*}
    which is the same formula as for real $s$.

    We complete the proof by induction on $q$. For $q=0$,
    \begin{equation*}
        \frac{\bx^{2\balpha}}{|\bx|^s} 
        = \frac{1}{|\bx|^{s-2\onenorm{\balpha}}} \left( \frac{\bx}{|\bx|} \right)^{2\balpha}, 
        \quad P_{0, \balpha,j} (\bx) = \bx^{2\balpha}.
    \end{equation*}
    For the inductive step, we assume the result holds for $q$ and
    differentiate again:
    \begin{align*}
        \frac{\partial^{q+1}}{\partial x_j^{q+1}} \left( \frac{\bx^{2\balpha}}{|\bx|^s} \right)
        &= \frac{\partial}{\partial x_j} \left( \frac{1}{|\bx|^{s + q - 2\onenorm{\balpha}}} P_{q, \balpha,j}\left( \frac{\bx}{|\bx|} \right) \right) \\
        &= \frac{-(s + q - 2\onenorm{\balpha})}{|\bx|^{s + q - 2\onenorm{\balpha} + 1}} 
            \, \frac{x_j}{|\bx|} \, P_{q, \balpha,j}\left( \frac{\bx}{|\bx|} \right) \\
        &\quad + \frac{1}{|\bx|^{s + q - 2\onenorm{\balpha}}} \left[
            P^{(j)}_{q, \balpha,j}\left(\frac{\bx}{|\bx|}\right) \left(\frac{1}{|\bx|} - \frac{x_j^2}{|\bx|^3} \right) 
            - \sum_{l\neq j} P^{(l)}_{q, \balpha,j}\left(\frac{\bx}{|\bx|}\right) \frac{x_jx_l}{|\bx|^3} 
        \right] \\
        &:= \frac{1}{|\bx|^{s + q + 1 - 2\onenorm{\balpha}}} P_{q+1, \balpha,j}\left( \frac{\bx}{|\bx|} \right),
    \end{align*}
    where $P^{(l)}_{q,\balpha,j}(\bx)=\frac{\partial}{\partial x_l}
    P_{q,\balpha,j}(\bx)$ for $l=1, \ldots,d$ are polynomials with $\deg
    P^{(l)}_{q,\balpha,j} \leq \deg P_{q,\balpha,j} - 1$, and
    \begin{equation*}
        P_{q+1, \balpha,j}(\bx) 
        := -(s + q - 2\onenorm{\balpha}) x_j P_{q, \balpha,j}\left(\bx\right) 
        + P^{(j)}_{q, \balpha,j}\left(\bx\right)\left(1 - x_j^2\right) 
        - \sum_{l\neq j} P^{(l)}_{q, \balpha,j}\left(\bx\right) x_j x_l
    \end{equation*}
    is a polynomial with degree at most $q + 2\onenorm{\balpha} + 1$ in
    variable $\bx = (x_1, \ldots, x_d)$.
\end{proof}

\begin{proof}[Proof of \cref{lem:error_order_rhs}]
    We begin by introducing a smooth cutoff function $\eta(\bx) \in
    C^{\infty}(\bbR^d)$ such that
    \begin{equation} \label{eq:cutoff_rhs}
        \eta(\bx) = \eta(|\bx|), \quad \eta(\bx) = \begin{cases}
            0, & |\bx| \leq \frac{1}{2}, \\
            1, & |\bx| \geq 1.
        \end{cases}
    \end{equation}
    Then we rewrite the difference of the integral and summation as:
    \begin{align*}
        & \quad \int_{\bbR^d} g(\bx h) \frac{\bx^{2\balpha}}{|\bx|^s} \,\rd \bx 
            - \sum_{\infnorm{\bi}=1}^{\infty} g(\bi h) \frac{\bi^{2\balpha}}{|\bi|^s} \\
        &= \int_{|\bx| \leq 1} g(\bx h) \frac{\bx^{2\balpha}}{|\bx|^s} (1 - \eta(\bx)) \,\rd \bx 
            + \left( \int_{\bbR^d} g(\bx h) \frac{\bx^{2\balpha}}{|\bx|^s} \eta(\bx) \,\rd \bx 
                - \sum_{\infnorm{\bi}=1}^{\infty} g(\bi h) \frac{\bi^{2\balpha}}{|\bi|^s} \eta(\bi) 
            \right) \\
        &:= {\rm I} + {\rm II}.
    \end{align*}

    \paragraph{Estimate of ${\rm I}$.}
    Note that $\partial^{\bbeta} g(0) = 0$ for $1 \leq \onenorm{\bbeta}
    \leq 2p+1$. Moreover, for multi-index $\onenorm{\bbeta}=2p+2$ and
    constant $\theta \in [0, 1]$, 
    \begin{equation*}
        \int_{|\bx| \leq 1} \Big| \frac{\partial^{\bbeta} g(\theta \bx h)}{\bbeta!} (\bx h)^{\bbeta} \frac{\bx^{2\balpha}}{|\bx|^s} (1 - \eta(\bx)) \Big| \,\rd \bx 
        \leq C h^{2p+2} \int_{|\bx|\leq 1} |\bx|^{2\onenorm{\balpha} + 2p + 2 - \rep(s)} \,\rd \bx
        \leq C h^{2p+2}, 
    \end{equation*}
    where $0 < \rep(s) < d + 2\onenorm{\balpha}$ guarantees the
    convergence of the integral. Thus, the Taylor expansion of $g(\bx
    h)$ around $\bx=0$ gives
    \begin{equation*}
        {\rm I} = \int_{|\bx| \leq 1} g(\bx h) \frac{\bx^{2\balpha}}{|\bx|^s} (1 - \eta(\bx)) \,\rd \bx 
        = g(0) \int_{|\bx|\leq 1} \frac{\bx^{2\balpha}}{|\bx|^s} (1 - \eta(\bx)) \,\rd \bx + O(h^{2p+2}).
    \end{equation*}   
    
    \paragraph{Estimate of ${\rm II}$.} 
    Define
    \begin{equation} \label{eq:def_fh_rhs}
        f_h(\bx) := g(\bx h) \frac{\bx^{2\balpha}}{|\bx|^s} \eta(\bx),
    \end{equation}
    then $f_h(0)=0$ and $f_h \in \ccinf(\bbR^d)$ for $h > 0$. Apply the
    Poisson summation formula in \cref{lem:poisson_summation} we obtain
    \begin{align*}
        {\rm II} 
        = \int_{\bbR^d} f_h(\bx) \,\rd \bx - \sum_{\infnorm{\bi}=1}^{\infty} f_h(\bi) 
        = \widehat{f_h}(0) - \sum_{\infnorm{\bi}=0}^{\infty} f_h(\bi)
        = \widehat{f_h}(0) - \sum_{\infnorm{\bk}=0}^{\infty} \widehat{f_h}(\bk)
        = - \sum_{\infnorm{\bk}=1}^{\infty} \widehat{f_h}(\bk).
    \end{align*}

    Choose integer 
    \begin{equation*}
        q = 2p+3+d+2\onenorm{\balpha}.
    \end{equation*}
    For each $\bk \in \bbZ^d \setminus \{0\}$, let index $j = j(\bk) \in 
    \text{argmax}_{\ell=1, \ldots, d} |k_{\ell}|$ and hence $|k_j| =
    \infnorm{\bk}$. Then we have the following Fourier estimate for
    $h>0$ sufficiently small, proved in \cref{lem:estimate_fourier_fh}
    below:
    \begin{equation*}
        \widehat{f_h}(\bk)
        = g(0)\frac{W(\bk)}{(2\pi k_j)^q} + R_h(\bk),
        \qquad
        |R_h(\bk)|\le C h^{2p+2} \frac{1}{|k_j|^q},
        \qquad
        |W(\bk)|\le C.
    \end{equation*}
    Furthermore, for $n\in \bbN_+$, let $S_n := \{\bk \in \bbZ^d :
    \infnorm{\bk} = n\}$, by \cref{lem:linfty_shell_count} the number of
    elements in $S_n$ satisfies $\# S_n \leq C_d n^{d-1}$ for some
    constant $C_d>0$ depending only on the dimension $d$. Thus, whenever
    $q > d$, we have
    \begin{equation*}
        \sum_{\infnorm{\bk}=1}^{\infty} \frac{1}{|k_j|^q} 
        = \sum_{n=1}^{\infty} \sum_{\bk \in S_n} \frac{1}{n^q} 
        \leq C_d \sum_{n=1}^{\infty} \frac{1}{n^{q-d+1}} < \infty.
    \end{equation*}
    Therefore, 
    \begin{equation*}
        {\rm II} 
        = - \sum_{\infnorm{\bk}=1}^{\infty} \widehat{f_h}(\bk) 
        = - g(0) \sum_{\infnorm{\bk}=1}^{\infty} \frac{W(\bk)}{(2\pi k_j)^q} + O(h^{2p+2}).
    \end{equation*}

    \paragraph{Conclusion.}
    Combining the estimates for ${\rm I}$ and ${\rm II}$ yields
    \begin{equation*}
        {\rm I} + {\rm II} = g(0)b_{\balpha}(s) + O(h^{2p+2}), \quad 
        b_{\balpha}(s)
        := \int_{|\bx|\le 1}\frac{\bx^{2\balpha}}{|\bx|^s}(1-\eta(\bx))\,\rd\bx
        - \sum_{\infnorm{\bk}=1}^{\infty} \frac{W(\bk)}{(2\pi k_j)^q}.
    \end{equation*}
\end{proof}

\begin{lemma} \label{lem:estimate_fourier_fh}
    Let $p\in\bbN$, $\balpha\in\bbN^d$ with $0\leq\onenorm{\balpha}\leq
    p$, and $s\in\bbC$ with $0<\rep(s)<d+2\onenorm{\balpha}$. Assume
    that $g\in\ccinf(\bbR^d)$ satisfies
    \begin{equation*}
        \partial^{\bbeta}g(0)=0,
        \qquad 1\leq\onenorm{\bbeta}\leq 2p+1.
    \end{equation*}
    Let $\eta$ be the cutoff function defined in \eqref{eq:cutoff_rhs},
    and let $q\in\bbN$ satisfy $q\geq 2p+3+d+2\onenorm{\balpha}$. For
    each $\bk\in\bbZ^d\setminus\{0\}$, choose any index $j=j(\bk)$ in
    $\text{argmax}_{\ell=1,\ldots,d}|k_\ell|$, so that
    $|k_j|=\infnorm{\bk}$, and define
    \begin{equation*}
        W(\bk) := (-\mi)^q \int_{\bbR^d} 
            \partial^q_j \left( \frac{\bx^{2\balpha}}{|\bx|^s} \eta(\bx) \right) 
            \re^{-2\pi \mi \, \bk \cdot \bx} \,\rd \bx.
    \end{equation*}
    Then there exists a constant $C>0$, independent of $h$ and $\bk$,
    such that $|W(\bk)|\leq C$ and, for all sufficiently small $h>0$,
    the function $f_h$ defined in \eqref{eq:def_fh_rhs} satisfies
    \begin{equation*}
        \widehat{f_h}(\bk)
        = g(0)\frac{W(\bk)}{(2\pi k_j)^q} + R_h(\bk),
        \qquad
        |R_h(\bk)|\le C h^{2p+2} \, \frac{1}{|k_j|^q}.
    \end{equation*}
\end{lemma}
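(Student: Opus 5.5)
We prove the lemma by integrating by parts $q$ times in the $x_j$ direction and then Taylor-expanding $g(\bx h)$ around the origin. Set $F(\bx):=\bx^{2\balpha}|\bx|^{-s}\eta(\bx)$. Then $F$ is smooth, because $\eta$ vanishes near the origin. By \cref{lem:drv_x_alpha_s} (applied after expanding the derivatives of the product with $\eta$ by Leibniz), $\partial_j^r F$ is bounded by $C|\bx|^{2\onenorm{\balpha}-\rep(s)-r}$ for $|\bx|\ge 1$. Since $f_h=g(\cdot h)F$ is compactly supported and smooth, $q$ integrations by parts in $x_j$ give
\begin{equation*}
    \widehat{f_h}(\bk)=\frac{(-\mi)^q}{(2\pi k_j)^q}\int_{\bbR^d}\partial_j^q\bigl(g(\bx h)F(\bx)\bigr)\re^{-2\pi\mi\,\bk\cdot\bx}\,\rd\bx .
\end{equation*}
Next we expand the derivative by Leibniz,
\begin{equation*}
    \partial_j^q\bigl(g(\bx h)F\bigr)=\sum_{r=0}^{q}\binom{q}{r}h^r(\partial_j^r g)(\bx h)\,\partial_j^{q-r}F .
\end{equation*}
We then compare this with $g(0)\partial_j^qF$, whose Fourier integral is exactly $g(0)W(\bk)$.

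The bound $|W(\bk)|\le C$ follows from $\partial_j^q F\in L^1$. On the annulus $1/2\le|\bx|\le1$ the function is smooth. Outside it decays like $|\bx|^{2\onenorm{\balpha}-\rep(s)-q}$, which is integrable because $q>d+2\onenorm{\balpha}$. The remainder $R_h(\bk)$ is $(2\pi k_j)^{-q}(-\mi)^q$ times the Fourier integral of
\begin{equation*}
    \bigl(g(\bx h)-g(0)\bigr)\partial_j^qF+\sum_{r\ge1}\binom{q}{r}h^r(\partial_j^r g)(\bx h)\,\partial_j^{q-r}F .
\end{equation*}
Since $|\re^{-2\pi\mi\,\bk\cdot\bx}|=1$, it suffices to show that the $L^1$ norm of this expression is $O(h^{2p+2})$, uniformly in $\bk$.

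This $L^1$ bound is the main obstacle. For the first term we use $|g(\bx h)-g(0)|\le C\min(1,|\bx h|^{2p+2})$, which holds by the vanishing derivatives of $g$ at $0$ and the boundedness of $g$. Then we split the integral at $|\bx|=1/h$. The inner part is bounded by $Ch^{2p+2}\int_{1/2\le|\bx|\le1/h}|\bx|^{2p+2+2\onenorm{\balpha}-\rep(s)-q}\,\rd\bx$. The exponent is below $-d$ because $q\ge2p+3+d+2\onenorm{\balpha}$, so this piece is $O(h^{2p+2})$. The outer part is at most $C\int_{|\bx|>1/h}|\bx|^{2\onenorm{\balpha}-\rep(s)-q}\,\rd\bx=O(h^{q-d-2\onenorm{\balpha}+\rep(s)})$, which is $o(h^{2p+2})$.

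For the terms with $r\ge1$ we use Taylor's theorem for $\partial_j^rg$. Its derivatives up to order $2p+1-r$ vanish at $0$ (when $r\le2p+1$), so $|(\partial_j^r g)(\bx h)|\le C\min(1,|\bx h|^{\max(2p+2-r,0)})$. Hence $h^r|(\partial_j^rg)(\bx h)|\le Ch^{2p+2}|\bx|^{2p+2-r}$ wherever $|\bx h|\le1$. The same split at $|\bx|=1/h$ applies, using the decay $|\partial_j^{q-r}F|\le C|\bx|^{2\onenorm{\balpha}-\rep(s)-q+r}$. In the inner region the integrand is again bounded by $h^{2p+2}|\bx|^{2p+2+2\onenorm{\balpha}-\rep(s)-q}$, which is integrable. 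In the outer region, where $g$ is bounded with bounded derivatives, we get $h^r\cdot O(h^{q-r-d-2\onenorm{\balpha}+\rep(s)})$, which is fine. The region $|\bx h|>1$ matters only because $g(\bx h)$ is compactly supported in a bounded set. For small $h$, the annulus $1/2\le|\bx|\le1$ lies inside the inner region, so no special care is needed there. All constants depend only on $g$, $\eta$, $s$, $\balpha$ and $q$, not on $\bk$, which gives $|R_h(\bk)|\le Ch^{2p+2}|k_j|^{-q}$.
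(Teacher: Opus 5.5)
Your proposal is correct and follows the paper's proof. The paper also integrates by parts $q$ times in $x_j$ and applies Leibniz to $\partial_j^q\bigl((g(h\bx)-g(0))F\bigr)$, which is exactly your remainder integrand. It then combines the Taylor bounds $h^r|(\partial_j^r g)(h\bx)|\le Ch^{2p+2}|\bx|^{2p+2-r}$ with the decay of $\partial_j^{q-r}F$ and splits into an inner and an outer region.

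The only difference is where you split: at $|\bx|=1/h$, whereas the paper splits at $R/h$ with $\supp g\subset\{|\bx|<R\}$. With your choice the $r\ge1$ terms do not vanish on the outer region, so you need the compact support of $g$ (as you note) to confine them to the bounded shell $1/h<|\bx|<R/h$. The paper's choice leaves only $-g(0)F$ in the outer region.
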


\begin{proof}
    Since $f_h\in \ccinf(\bbR^d)$, after integration by parts $q$ times
    in the $x_j$ variable we have 
    \begin{equation*}
        \widehat{f_h}(\bk) 
        = \int_{\bbR^d} f_h(\bx)\, \re^{-2\pi \mi \, \bk \cdot \bx} \,\rd \bx 
        = \left( \frac{1}{2\pi \mi k_j} \right)^q 
        \int_{\bbR^d} \partial^q_j \left( g(\bx h) \frac{\bx^{2\balpha}}{|\bx|^s} \eta(\bx) \right) 
            \re^{-2\pi \mi \, \bk \cdot \bx} \,\rd \bx.
    \end{equation*}

    \paragraph{Bound of $W(\bk)$.}
    By definition of $W(\bk)$, note that $\eta(\bx)=0$ for $|\bx|\leq
    \frac{1}{2}$ and $\eta(\bx)=1$ for $|\bx|\geq 1$, we have
    \begin{equation*}
        |W(\bk)|
        \leq \int_{\bbR^d}\left|\partial_j^q\!\left(\frac{\bx^{2\balpha}}{|\bx|^s}\eta(\bx)\right)\right|\,\rd\bx 
        \leq \int_{\frac{1}{2} \leq |\bx| \leq 1} \left|\partial_j^q\!\left(\frac{\bx^{2\balpha}}{|\bx|^s}\eta(\bx)\right)\right|\,\rd\bx 
        + \int_{|\bx| \geq 1} \left|\partial_j^q\!\left(\frac{\bx^{2\balpha}}{|\bx|^s}\right)\right|\,\rd\bx.
    \end{equation*}
    On the annulus $\{\frac12\leq|\bx|\leq1\}$, the integrand is smooth
    and hence integrable. For $|\bx|\geq1$, \cref{lem:drv_x_alpha_s} and
    the boundedness of $P_{q,\balpha,j}$ on the unit sphere imply
    \begin{equation*}
        \left|\partial_j^q\!\left(\frac{\bx^{2\balpha}}{|\bx|^s}\right)\right|
        \leq C|\bx|^{-q-\rep(s)+2\onenorm{\balpha}},
        \qquad |\bx|\geq1.
    \end{equation*}
    Here $C$ is chosen independently of $j$, since $j\in\{1,\ldots,d\}$.
    Consequently,
    \begin{equation*}
        \int_{|\bx|\geq1}
        \left|\partial_j^q\!\left(\frac{\bx^{2\balpha}}{|\bx|^s}\right)\right|\,\rd\bx
        \leq C\int_1^\infty
        r^{-q-\rep(s)+2\onenorm{\balpha}+d-1}\,\rd r<\infty,
    \end{equation*}
    because $q+\rep(s)>d+2\onenorm{\balpha}$. Thus we have $|W(\bk)|\leq C$.

    \paragraph{Estimate of the remainder $R_h(\bk)$.}
    Define 
    \begin{equation*}
        \Delta_h(\bx):=\bigl(g(h\bx)-g(0)\bigr)\eta(\bx)\frac{\bx^{2\balpha}}{|\bx|^s}.
    \end{equation*}
    Then
    \begin{equation*}
        \widehat{f_h}(\bk) - g(0)\frac{W(\bk)}{(2\pi k_j)^q}
        = \left(\frac{1}{2\pi \mi k_j}\right)^q
        \int_{\bbR^d} \partial_j^q \Delta_h(\bx)\,\re^{-2\pi \mi \bk\cdot \bx}\,\rd\bx
        =: R_h(\bk),
    \end{equation*}
    and hence
    \begin{equation*}
        |R_h(\bk)|
        \le (2\pi)^{-q}|k_j|^{-q}\,\|\partial_j^q\Delta_h\|_{L^1(\bbR^d)}.
    \end{equation*}
    It remains to show $\|\partial_j^q\Delta_h\|_{L^1(\bbR^d)}\le
    Ch^{2p+2}$. Since $\eta(\bx)=0$ for $|\bx|\le 1/2$, we only
    integrate over $|\bx|\ge 1/2$. Furthermore, fix $R>0$ such that
    $\supp(g(\bx))\subset\{\bx:|\bx| < R\}$, so
    $\supp(g(h\bx))\subset\{\bx:|\bx| < R/h\}$. 

    By Leibniz' rule,
    \begin{equation*}
        \partial_j^q \Delta_h(\bx)
        = \sum_{m=0}^{q}\binom{q}{m}\,
        \partial_j^{m}\bigl(g(h\bx)-g(0)\bigr)\,
        \partial_j^{\,q-m}\!\left(\eta(\bx)\frac{\bx^{2\balpha}}{|\bx|^s}\right).
    \end{equation*}
    We prove $\|\partial_j^q\Delta_h\|_{L^1(\bbR^d)}\le Ch^{2p+2}$ in
    three steps. We first give bounds for $\partial_j^{m}(g(h\bx)-g(0))$
    and
    $\partial_j^{q-m}\!\left(\eta(\bx)\bx^{2\balpha}/|\bx|^s\right)$,
    then combine these bounds to obtain the $L^1$ estimate of
    $\partial_j^q\Delta_h$.    

    \paragraph{Step 1: bounds for $\partial_j^{m}(g(h\bx)-g(0))$.} 
    Let $m\in\{0,1,\ldots,q\}$. Since $\partial^{\bbeta}g(0)=0$ for
    $1\le \onenorm{\bbeta}\le 2p+1$, Taylor's theorem gives, for $m=0$,
    we have $|g(h\bx)-g(0)| \le C (h|\bx|)^{2p+2}$; for
    $m=1,\ldots,2p+1$, we have 
    \begin{equation*}
        \left|\partial_j^m\bigl(g(h\bx)-g(0)\bigr)\right|
        = h^m |(\partial_j^m g)(h\bx) - (\partial_j^m g)(0)|
        \le C h^m (h|\bx|)^{2p+2-m}
        = C h^{2p+2}|\bx|^{2p+2-m}.
    \end{equation*}
    Further, for $2p+2 \leq m \leq q$, the uniform boundedness of
    $\partial_j^m g$ and the chain rule give
    \begin{equation*}
        \left|\partial_j^m\bigl(g(h\bx)-g(0)\bigr)\right|
        =h^m|(\partial_j^m g)(h\bx)|\leq Ch^m.
    \end{equation*}
    Note that on the region $\{|\bx|\le R/h\}$, $h|\bx| \leq R$ implies
    \begin{equation*}
        h^m
        = h^{2p+2}(h|\bx|)^{m-(2p+2)}|\bx|^{2p+2-m}
        \le C_R h^{2p+2}|\bx|^{2p+2-m}.
    \end{equation*}
    Therefore, we conclude that for all $m=0,1,\ldots,q$,
    \begin{equation*}
        \left|\partial_j^{m}\bigl(g(h\bx)-g(0)\bigr)\right|
        \le C h^{2p+2}|\bx|^{2p+2-m}, 
        \qquad \frac{1}{2}\le |\bx|\le R/h.
    \end{equation*}

    \paragraph{Step 2: bounds for $\partial_j^{q-m}\!\left(\eta(\bx)\bx^{2\balpha}/|\bx|^s\right)$.}
    For $|\bx|\ge 1$, we have $\eta(\bx)=1$ and thus
    \cref{lem:drv_x_alpha_s}, together with the boundedness of
    $P_{q-m,\balpha,j}$ on the unit sphere, gives
    \begin{equation*}
        \left|\partial_j^{\,q-m}\!\left(\eta(\bx)\frac{\bx^{2\balpha}}{|\bx|^s}\right)\right|
        = \left|\partial_j^{\,q-m}\!\left(\frac{\bx^{2\balpha}}{|\bx|^s}\right)\right|
        \le C |\bx|^{2\onenorm{\balpha}-\rep(s)-(q-m)},\qquad |\bx|\ge 1,
    \end{equation*}
    where $C$ is chosen uniformly for $m\in\{0,\ldots,q\}$ and
    $j\in\{1,\ldots,d\}$. On the annulus $\{1/2\le |\bx|\le 1\}$, the
    function $\eta(\bx)\bx^{2\balpha}/|\bx|^s$ is smooth and the
    integral domain is bounded, hence
    \begin{equation*}
        \left|\partial_j^{\,q-m}\!\left(\eta(\bx)\frac{\bx^{2\balpha}}{|\bx|^s}\right)\right|
        \leq C \leq C |\bx|^{2\onenorm{\balpha}-\rep(s)-(q-m)}, \qquad \frac12\le |\bx|\le 1.
    \end{equation*}
    Therefore, we obtain the uniform bound for
    $\partial_j^{q-m}\!\left(\eta(\bx)\bx^{2\balpha}/|\bx|^s\right)$
    over $|\bx|\ge 1/2$.

    \paragraph{Step 3: $L^1$ estimate of $\partial_j^q\Delta_h$.}
    For $h > 0$ sufficiently small, we have $R/h > 1$.
    Using Steps 1, 2 and splitting $\bbR^d$ into  
    $\{\frac{1}{2} \leq |\bx|\leq R/h\}$ and $\{|\bx| \geq R/h\}$, we obtain
    \begin{align*}
        \left\| \partial^q_j \Delta_h \right\|_{L^1(\bbR^d)} 
        = \left( \int_{\frac{1}{2} \leq |\bx| \leq R/h} 
        + \int_{|\bx| \geq R/h} \right) \left| \partial^q_j \Delta_h(\bx) \right| \rd \bx 
        := T_1 + T_2.
    \end{align*}
    For the first part, we have
    \begin{align*}
        T_{1}
        &= \int_{\frac{1}{2} \leq |\bx| \leq R/h} \Big| \sum_{m=0}^q \binom{q}{m}\,
        \partial_j^{m}\bigl(g(h\bx)-g(0)\bigr)\,
        \partial_j^{\,q-m}\!\left(\eta(\bx)\frac{\bx^{2\balpha}}{|\bx|^s}\right) \Big| \, \rd \bx \\
        &\le C h^{2p+2}\sum_{m=0}^{q}\binom{q}{m}
        \int_{\frac{1}{2} \le |\bx|\le R/h}
        |\bx|^{2p+2-m}\,|\bx|^{2\onenorm{\balpha}-\rep(s)-(q-m)} \rd\bx \\
        &\le C h^{2p+2}
        \int_{\frac{1}{2}}^{R/h} r^{2p+2+2\onenorm{\balpha}-\rep(s)-q+d-1}\,\rd r
        \le C h^{2p+2},
    \end{align*}
    where the last inequality uses $q\ge 2p+3+d+2\onenorm{\balpha}$, so
    the exponent of $r$ is at most $-2-\rep(s)<-2$.

    For $T_2$, note that $g(h\bx)=0$ when $|\bx|\ge R/h$, hence
    $\Delta_h(\bx)=-g(0)\eta(\bx)\bx^{2\balpha}/|\bx|^s$ there, and
    since $\eta(\bx)=1$ for $|\bx|\ge 1$ and $R/h > 1$ for $h>0$
    sufficiently small,
    \begin{equation*}
        T_2
        = |g(0)| \int_{|\bx|\ge R/h}\left|\partial_j^q\left(\frac{\bx^{2\balpha}}{|\bx|^s}\right)\right|\rd\bx
        \le C \int_{R/h}^{\infty} r^{-q-\rep(s)+2\onenorm{\balpha}+d-1}\,\rd r
        \le C h^{2p+2}.
    \end{equation*}
    Therefore $\|\partial_j^q\Delta_h\|_{L^1(\bbR^d)}\le C h^{2p+2}$,
    which completes the proof.
\end{proof}

\section{Proofs of non-singularity of moment-matching matrix}
\label{app:nonsingularity_proof}

\begin{proof}[Proof of \cref{lem:non_singular_coeff}]
    Recall that
    \begin{equation*}
        A_{jk}=\sum_{\bi\in\calG_{\bbeta_k}}\bi^{2\balpha_j}.
    \end{equation*}
    Since the exponents are even, $\bi^{2\balpha_j}$ is constant on the 
    sign-orbit $\calG_{\bbeta_k}$, hence
    \begin{equation*}
        A_{jk}=c(\bbeta_k)\,\bbeta_k^{2\balpha_j},\qquad
        c(\bbeta):=\#\calG_{\bbeta}=2^{r(\bbeta)},\,\, 
        r(\bbeta):=\#\{\ell:\beta_\ell\neq 0\}.
    \end{equation*}
    Therefore $A=MD$, where $D=\mathrm{diag}(c(\bbeta_k))$ has strictly 
    positive diagonal entries and
    \begin{equation*}
        M_{jk}=\bbeta_k^{2\balpha_j}.
    \end{equation*}
    Since $D$ is invertible, it suffices to show that $M$ is nonsingular.

    For $m\in\bbN$, define the even 1D polynomials
    \begin{equation*}
        q_m(x) := \prod_{r=0}^{m-1} (x^2-r^2) = x^2(x^2-1^2)\cdots(x^2-(m-1)^2),
        \qquad q_0(x):=1.
    \end{equation*}
    and for $\bgamma\in\bbN^d$ define the tensor-product polynomials
    \begin{equation*}
        Q_{\bgamma}(\bx):=\prod_{\ell=1}^d q_{\gamma_\ell}(x_\ell)
        = \prod_{\ell=1}^d \prod_{r=0}^{\gamma_\ell-1} (x_\ell^2-r^2).
    \end{equation*}
    For $\bbeta\in\bbN^d$, we have $Q_{\bgamma}(\bbeta)=0$ whenever
    $\beta_\ell<\gamma_\ell$ for some $\ell$ (since then the factor with
    $r=\beta_\ell$ appears and vanishes). In particular,
    \begin{equation*}
        Q_{\bgamma}(\bbeta)=0 \quad \text{unless } \bgamma\le \bbeta,
    \end{equation*}
    where $\bgamma\le \bbeta$ means componentwise inequality.  
    Moreover, for $\onenorm{\bbeta} > 0$,
    \begin{equation*}
        Q_{\bbeta}(\bbeta)
        =\prod_{\ell=1}^d\prod_{r=0}^{\beta_\ell-1}(\beta_\ell^2-r^2)\neq 0,
        \quad \text{and} \quad 
        Q_{\bzero}(\bzero) = 1.
    \end{equation*}
    Order the index set $\{\bgamma\in\bbN^d:0\le \onenorm{\bgamma}\le p\}$ 
    by any linear extension of the partial order $\bgamma\le\bbeta$ 
    (componentwise). Then the evaluation matrix
    \begin{equation*}
        E_{\bgamma,\bbeta}:=Q_{\bgamma}(\bbeta)
    \end{equation*}
    is upper triangular with nonzero diagonal, hence $E$ is invertible.

    Now we express $Q_{\bgamma}(\bx)$ in the monomial basis.
    Since $q_m(x)$ is an even polynomial of degree $2m$ with leading 
    coefficient $1$, it admits an expansion
    \begin{equation*}
        q_m(x)=\sum_{r=0}^{m}l_{m,r}\,x^{2r},\qquad l_{m,m}=1.
    \end{equation*}
    Consequently, for every $\bgamma\in\bbN^d$ we can expand the tensor
    product as
    \begin{equation*}
        Q_{\bgamma}(\bx)
        =\prod_{\ell=1}^d\Bigl(\sum_{r=0}^{\gamma_\ell}l_{\gamma_\ell,r}\,x_\ell^{2r}\Bigr)
        =\sum_{\bdelta\le \bgamma} L_{\bgamma,\bdelta}\,\bx^{2\bdelta},
        \qquad
        L_{\bgamma,\bdelta}:=\prod_{\ell=1}^d l_{\gamma_\ell,\delta_\ell}.
    \end{equation*}
    Specifically, we have $L_{\bgamma,\bgamma}=1$. Furthermore, we
    extend this notation by setting $L_{\bgamma,\bdelta}=0$ whenever
    $\delta_\ell>\gamma_\ell$ for at least one component $\ell$. Under
    the same ordering for $\{\bgamma\in\bbN^d:0\le \onenorm{\bgamma}\le
    p\}$ as above, the matrix $L=[L_{\bgamma,\bdelta}]$ is lower
    triangular with ones on the diagonal, hence invertible.

    Evaluating the expansion at $\bx=\bbeta$ yields
    \begin{equation*}
        Q_{\bgamma}(\bbeta)=\sum_{\bdelta\le \bgamma} L_{\bgamma,\bdelta}\,\bbeta^{2\bdelta}.
    \end{equation*}
    Stacking these identities for all $\bgamma,\bbeta$ gives the matrix
    relation
    \begin{equation*}
        E = L M.
    \end{equation*}
    Since $E$ and $L$ are invertible, then $M$ is invertible. Therefore
    $A=MD$ is invertible as well.
\end{proof}

\section{Proofs for right-hand side of moment-matching system}

We first introduce a lemma to verify the holomorphicity of an
integral with respect to a complex parameter.

\begin{lemma}[Holomorphic parameter integral] \label{lem:holomorphic}
    Let $U\subset \bbR^d$ be measurable and let $\Omega\subset \bbC$ be
    open. Suppose $F:\Omega\times U\to \bbC$ is jointly measurable and
    satisfies:
    \begin{enumerate}
        \item[(1)] For each fixed $\bx\in U$, the map $z\mapsto F(z,\bx)$
        is holomorphic on $\Omega$.
        \item[(2)] For every compact set $K\subset \Omega$, there exists
        $M_K\in L^1(U)$ such that $|F(z,\bx)|\le M_K(\bx)$ for all
        $z\in K$ and $\bx\in U$.
    \end{enumerate}
    Define
    \begin{equation*}
        f(z) := \int_U F(z,\bx)\,\rd \bx, \qquad z\in \Omega.
    \end{equation*}
    Then $f$ is holomorphic on $\Omega$.
\end{lemma}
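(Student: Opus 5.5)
The plan is to reduce holomorphy of $f$ to Morera's theorem, using Fubini and dominated convergence. Holomorphy is a local property, so it suffices to prove that $f$ is holomorphic on every open disc $B$ with $\overline{B}\subset\Omega$. Fix such a disc and set $K:=\overline{B}$, which is compact. By hypothesis (2) there is $M_K\in L^1(U)$ with $|F(z,\bx)|\le M_K(\bx)$ for all $z\in K$ and $\bx\in U$. This single majorant drives every step below.

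\textbf{Step 1: continuity of $f$ on $K$.} Let $z_n\to z$ in $K$. By (1), $F(z_n,\bx)\to F(z,\bx)$ for each fixed $\bx$. Since $|F(z_n,\bx)|\le M_K(\bx)$, dominated convergence gives $f(z_n)\to f(z)$. In particular $f$ is finite and continuous on $B$.

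\textbf{Step 2: vanishing triangle integrals.} Let $\Delta\subset B$ be a closed triangle with boundary $\partial\Delta$, parametrized by a piecewise $C^1$ curve $\gamma:[0,1]\to B$. Consider
\begin{equation*}
\int_{\partial\Delta} f(z)\,\rd z=\int_0^1\int_U F(\gamma(t),\bx)\,\gamma'(t)\,\rd\bx\,\rd t .
\end{equation*}
The integrand is jointly measurable on $[0,1]\times U$. This follows because $F$ is jointly measurable and $t\mapsto(\gamma(t),\bx)$ is measurable. It is also bounded in absolute value by $\sup|\gamma'|\,M_K(\bx)$, which is integrable on $[0,1]\times U$. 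Fubini's theorem therefore allows us to swap the order of integration:
\begin{equation*}
\int_{\partial\Delta} f(z)\,\rd z=\int_U\Big(\int_{\partial\Delta}F(z,\bx)\,\rd z\Big)\rd\bx .
\end{equation*}
By (1) and Goursat's theorem, the inner integral vanishes for every $\bx$, so the whole expression is $0$.

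\textbf{Step 3: conclusion.} Morera's theorem applied on $B$ shows that $f$ is holomorphic there. Since $B$ was an arbitrary disc with closure in $\Omega$, $f$ is holomorphic on $\Omega$.

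The main technical point I expect is the measurability needed for Fubini in Step 2. I would handle it directly via the composition argument above: each linear segment of $\partial\Delta$ is continuous in $t$, and $F$ is jointly measurable, so the composed integrand is measurable on $[0,1]\times U$.

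An alternative route avoids Fubini altogether. One can differentiate under the integral sign using the Cauchy estimate $|\partial_z F(z,\bx)|\le r^{-1}M_K(\bx)$ on a smaller disc. This also needs the difference quotients to be dominated, which follows from the same Cauchy integral formula. Morera's theorem is the cleaner of the two.
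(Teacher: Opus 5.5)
Your proposal is correct and follows essentially the same route as the paper: dominated convergence for continuity, then Fubini (justified by the majorant $M_K$ on a compact set containing the triangle) to swap the contour and $\bx$-integrals, Cauchy--Goursat for the inner integral, and Morera's theorem. Your localization to discs and the explicit measurability check for Fubini are only slightly more detailed versions of the paper's argument.
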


\begin{proof}
    By dominated convergence, $f$ is continuous on $\Omega$. Let $T$ be
    any triangle whose closure $\overline{T}$ is contained in $\Omega$.
    By assumption (2), there exists $M\in L^1(U)$ such that
    $|F(z,\bx)|\leq M(\bx)$ for $z\in\overline T$. Hence Fubini's
    theorem and Cauchy's theorem give
    \begin{equation*}
        \int_{\partial T}f(z)\,\rd z
        =\int_U\left(\int_{\partial T}F(z,\bx)\,\rd z\right)\rd\bx
        =0.
    \end{equation*}
    Thus $f$ is holomorphic by Morera's theorem.
\end{proof}

\begin{proof}[Proof of \cref{lem:holom_bs_D1}]
    Recall that in the proof of \cref{lem:error_order_rhs} we obtained
    \begin{align*}
        b_{\balpha}(s)
        &= \lim_{N\rightarrow \infty} 
            \left( \int_{\bbR^d} g(\bx \frac{a}{N}) \frac{\bx^{2\balpha}}{|\bx|^s} \rd \bx 
            - \sum_{\infnorm{\bi}=1}^{\infty} g(\bi \frac{a}{N}) \frac{\bi^{2\balpha}}{|\bi|^s} 
            \right) \\
        &= \int_{|\bx|\le 1}\frac{\bx^{2\balpha}}{|\bx|^s}(1-\eta(\bx))\,\rd\bx
        - \sum_{\infnorm{\bk}=1}^{\infty}\frac{W(\bk;s)}{(2\pi k_j)^q}
        =: b_{\balpha}^{(1)}(s) - b_{\balpha}^{(2)}(s),
        \qquad s\in D_1,
    \end{align*}
    where $j=j(\bk)\in \text{argmax}_{\ell=1, \ldots, d} |k_{\ell}|$ and
    $q = 2p+3+d+2\onenorm{\balpha}$ are chosen as in the proof of
    \cref{lem:error_order_rhs}. We show that both $b_{\balpha}^{(1)}$
    and $b_{\balpha}^{(2)}$ are holomorphic on 
    $D_1 = \left\{ s \in \bbC\, : \, 0 < \rep(s) < d+ 2\onenorm{\balpha} \right\}$.

    For the first term $b_{\balpha}^{(1)}(s)$, we split the integral as
    \begin{equation*}
        b_{\balpha}^{(1)}(s)
        = \int_{\frac12\le |\bx|\le 1}\frac{\bx^{2\balpha}}{|\bx|^s}(1-\eta(\bx))\,\rd\bx
        + \int_{|\bx|\le \frac12}\frac{\bx^{2\balpha}}{|\bx|^s}\,\rd\bx
        =: J_1(s)+J_2(s).
    \end{equation*}
    For $J_1$, the integrand is holomorphic in $s$ for each $\bx$ in the
    integration domain, and is uniformly bounded on compact subsets of
    $D_1$. Thus, by \cref{lem:holomorphic}, $J_1$ is holomorphic on $D_1$.
    For $J_2$, using polar coordinates $\bx=r\omega$ with $\omega\in\bbS^{d-1}$,
    \begin{align*}
        J_2(s)
        = \int_{\bbS^{d-1}}\omega^{2\balpha}\,\rd S(\omega)\int_0^{1/2} r^{2\onenorm{\balpha}-s+d-1}\,\rd r
        = \left(\int_{\bbS^{d-1}}\omega^{2\balpha}\,\rd S(\omega)\right)
        \frac{2^{\,s-d-2\onenorm{\balpha}}}{d+2\onenorm{\balpha}-s}.
    \end{align*}
    Since $D_1$ excludes $s=d+2\onenorm{\balpha}$, the right-hand side is
    holomorphic on $D_1$. Hence $b_{\balpha}^{(1)}$ is holomorphic on $D_1$.

    For the second term $b_{\balpha}^{(2)}$, fix an arbitrary compact
    set $K\subset D_1$ and set $\sigma_K:=\min_{s\in K}\rep(s)>0$. For
    each fixed $\bk\neq0$,
    \begin{align*}
        W(\bk;s)
        &= (-\mi)^q \int_{\bbR^d} \partial_j^q\!\left(\frac{\bx^{2\balpha}}{|\bx|^s}\eta(\bx)\right)
        \re^{-2\pi \mi\, \bk\cdot \bx}\,\rd\bx \\
        &=
        (-\mi)^q \int_{\frac12\le |\bx|\le 1}\partial_j^q\!\left(\frac{\bx^{2\balpha}}{|\bx|^s}\eta(\bx)\right)
        \re^{-2\pi \mi\,\bk\cdot \bx}\,\rd\bx
        + (-\mi)^q \int_{|\bx|\geq 1}\partial_j^q\!\left(\frac{\bx^{2\balpha}}{|\bx|^s}\right)
        \re^{-2\pi \mi\,\bk\cdot \bx}\,\rd\bx.
    \end{align*}
    The integrand is holomorphic in $s$ for each fixed $\bx$. On the
    annulus $\{1/2\le|\bx|\le1\}$, it is bounded uniformly for $s\in K$.
    On $\{|\bx|\ge1\}$, \cref{lem:drv_x_alpha_s} gives, uniformly for
    $s\in K$,
    \begin{equation*}
        \left|\partial_j^q\!\left(\frac{\bx^{2\balpha}}{|\bx|^s}\right)\right|
        \le C_K|\bx|^{-q-\rep(s)+2\onenorm{\balpha}}
        \le C_K|\bx|^{-q-\sigma_K+2\onenorm{\balpha}}.
    \end{equation*}
    This majorant is integrable because
    $q+\sigma_K>d+2\onenorm{\balpha}$. Since $K\subset D_1$ was
    arbitrary, \cref{lem:holomorphic} shows that $W(\bk;\cdot)$ is
    holomorphic on $D_1$. Moreover, the estimates for the integrands in
    $W(\bk;s)$ are uniform in $\bk$. Thus,
    \begin{equation*}
        \sup_{s\in K}\left|\frac{W(\bk;s)}{(2\pi k_j)^q}\right|
        \leq \frac{C_K}{|k_j|^q},
        \qquad \bk\in\bbZ^d\setminus\{0\},
    \end{equation*}
    where $C_K$ is a constant independent of $\bk$. Since
    $|k_j|=\infnorm{\bk}$, \cref{lem:linfty_shell_count} and $q>d$ imply
    \begin{equation*}
        \sum_{\infnorm{\bk}=1}^{\infty}|k_j|^{-q}
        \leq C_d\sum_{n=1}^{\infty}n^{d-1-q}<\infty.
    \end{equation*}
    The Weierstrass M-test therefore gives uniform convergence of the
    series defining $b_{\balpha}^{(2)}$ on every compact $K\subset D_1$.
    By the Weierstrass theorem, $b_{\balpha}^{(2)}$ is holomorphic on
    $D_1$.
\end{proof}

Next we give two lemmas analyzing the regular and singular parts of
$b_{\balpha}(s)$ respectively.

\begin{lemma} \label{lem:bs_regular_part}
    Let $p \in \bbN$. Assume that $g\in\ccinf(\bbR^d)$ satisfies
    $\supp(g)\subset[-a,a]^d$ and
    \begin{equation*}
        g(\bx) = g(|\bx|), \qquad \partial^{\bbeta} g(0) = 0, \quad
        \text{for } 1 \leq \onenorm{\bbeta} \leq 2p+1.
    \end{equation*}
    Given multi-index $\balpha \in \bbN^d$ such that $0 \leq
    \onenorm{\balpha} \leq p$, define for $N\in\bbN_+$
    \begin{equation*}
        I_N(s)
        := \int_{\frac{1}{2} \leq \infnorm{\bx} \leq N + \frac{1}{2}}
        g\!\left(\bx \frac{a}{N}\right)\frac{\bx^{2\balpha}}{|\bx|^s}\,\rd \bx
        - \sum_{\infnorm{\bi}=1}^{N} g\!\left(\bi \frac{a}{N}\right)\frac{\bi^{2\balpha}}{|\bi|^s}.
    \end{equation*}
    Then for every $s$ with $\rep(s) > d+2\onenorm{\balpha}-2$ the limit
    \begin{equation*}
        I(s) := \lim_{N\rightarrow\infty} I_N(s)
    \end{equation*}
    exists and defines a holomorphic function on $D :=
    \{s\in\bbC:\rep(s) > d+2\onenorm{\balpha}-2\}$.
\end{lemma}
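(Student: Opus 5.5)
The plan is to rewrite $I_N(s)$ as a sum of local midpoint-rule errors over unit cells, bound each error uniformly in $N$ by a summable majorant, and then apply dominated convergence together with the Weierstrass M-test.

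\textbf{Cell decomposition.} For $\bi\in\bbZ^d$ let $C_{\bi}:=\bi+[-\tfrac12,\tfrac12]^d$. The cells $C_{\bi}$ with $1\le\infnorm{\bi}\le N$ tile the shell $\{\tfrac12\le\infnorm{\bx}\le N+\tfrac12\}$ up to measure-zero overlaps. Write $h=a/N$ and $F_N(\bx):=g(\bx h)\,\bx^{2\balpha}|\bx|^{-s}$. Then
\begin{equation*}
    I_N(s)=\sum_{1\le\infnorm{\bi}\le N} e_{\bi}(N;s),
    \qquad
    e_{\bi}(N;s):=\int_{C_{\bi}}F_N(\bx)\,\rd\bx-F_N(\bi).
\end{equation*}
Every cell stays at $\ell_\infty$-distance at least $\tfrac12$ from the origin, so $F_N$ is smooth on each $C_{\bi}$. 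Expand $F_N$ around the cell center $\bi$ by Taylor's formula with second-order remainder. The linear term integrates to zero because $C_{\bi}$ is symmetric about $\bi$. This gives
\begin{equation*}
    |e_{\bi}(N;s)|\le C\sup_{\bx\in C_{\bi}}\max_{|\bgamma|=2}|\partial^{\bgamma}F_N(\bx)|.
\end{equation*}
Since $\supp(g)\subset[-a,a]^d$, only indices with $\infnorm{\bi}\le N$ contribute, which matches the summation range.

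\textbf{Uniform majorant; this is the key step.} Set $k(\bx)=\bx^{2\balpha}|\bx|^{-s}$ and $\sigma=2\onenorm{\balpha}-\rep(s)$. As in \cref{lem:drv_x_alpha_s}, for $\infnorm{\bx}\ge\tfrac12$ we have $|\partial^{\bgamma}k(\bx)|\le C|\bx|^{\sigma-|\bgamma|}$. By the Leibniz rule,
\begin{equation*}
    \partial^2F_N=h^2(\partial^2g)(\bx h)\,k+2h(\partial g)(\bx h)\,\partial k+g(\bx h)\,\partial^2k .
\end{equation*}
On $\supp F_N$ we have $|\bx|\le C/h$, so $h\le C/|\bx|$. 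Trading each factor of $h$ for $|\bx|^{-1}$, every term is bounded by $C|\bx|^{\sigma-2}$. On $C_{\bi}$ we have $|\bx|\sim|\bi|$, hence
\begin{equation*}
    |e_{\bi}(N;s)|\le M_{\bi}:=C|\bi|^{\sigma-2},
\end{equation*}
with $C$ independent of $N$. Counting lattice points by \cref{lem:linfty_shell_count}, $\sum_{\bi\ne0}M_{\bi}<\infty$ exactly when $\sigma-2<-d$, i.e.\ $\rep(s)>d+2\onenorm{\balpha}-2$. This is precisely the hypothesis, so the threshold is sharp for this argument.

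\textbf{Limit and holomorphy.} For fixed $\bi$, $g(\bx a/N)\to g(0)=1$ uniformly on $C_{\bi}$ as $N\to\infty$. Hence
\begin{equation*}
    e_{\bi}(N;s)\to e_{\bi}(s):=\int_{C_{\bi}}k(\bx)\,\rd\bx-k(\bi).
\end{equation*}
Dominated convergence for series, with majorant $M_{\bi}$, gives $I(s)=\sum_{\bi\ne0}e_{\bi}(s)$. Each $e_{\bi}$ is holomorphic in $s$: it is an integral over a compact set avoiding the origin, so \cref{lem:holomorphic} applies, and the point evaluation is entire. On a compact set $K\subset D$, put $\sigma_K=2\onenorm{\balpha}-\min_K\rep(s)$. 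The constants above are then uniform in $s\in K$, because the coefficients of the derivative polynomials in \cref{lem:drv_x_alpha_s} depend polynomially on $s$. This yields $|e_{\bi}(s)|\le C_K|\bi|^{\sigma_K-2}$, which is summable. The Weierstrass M-test gives locally uniform convergence, so $I$ is holomorphic on $D$.

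The main obstacle is the $N$-uniform bound in the second step, namely controlling the derivatives of $g(\bx h)$ near the edge of its support. The trade $h\lesssim|\bx|^{-1}$ on $\supp g(\cdot\,h)$ is what handles this.
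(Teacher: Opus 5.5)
Your proof is correct, and its first two steps match the paper's. The paper also tiles $\{\tfrac12\le\infnorm{\bx}\le N+\tfrac12\}$ by unit cells centred at lattice points and kills the linear Taylor term by central symmetry. It then bounds the Hessian on the shell $\infnorm{\bi}=n$ by $C_K\,n^{2\onenorm{\balpha}-\rep(s)-2}$, using $n\le N$; this is exactly your trade $h\lesssim|\bx|^{-1}$.

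You diverge in the passage to the limit. The paper uses the $N$-uniform shell bound only to show that $\{I_N\}$ is locally bounded on $D$. It proves pointwise convergence only on the half-plane $\rep(s)>d+2\onenorm{\balpha}$, where the integral and the sum both converge absolutely. It then invokes the Vitali--Porter theorem to get locally uniform convergence and holomorphy on all of $D$.

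You instead note that each cell error converges as $N\to\infty$, and that the same bound is a summable majorant independent of $N$. Dominated convergence for series (Tannery's theorem) then gives convergence at every $s\in D$ directly, and the Weierstrass M-test gives holomorphy.

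Your route is more elementary, since it needs no normal-family theorem. It also gives an explicit formula for the limit on the whole of $D$:
\[
I(s)=g(0)\sum_{\bi\neq 0}\Bigl(\int_{C_{\bi}}\bx^{2\balpha}|\bx|^{-s}\,\rd\bx-\bi^{2\balpha}|\bi|^{-s}\Bigr),
\]
which makes the analytic continuation concrete. The paper's route needs no termwise analysis, only the partial-sum bound plus convergence on a subregion.

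One small correction: this lemma does not assume $g(0)=1$. The termwise limit is therefore $g(0)\,e_{\bi}(s)$ rather than $e_{\bi}(s)$, which changes nothing else in your argument.
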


\begin{proof}
    We first show $\{I_N\}$ is locally bounded on domain $D$ via a shell
    decomposition and uniform Taylor estimates. This local boundedness,
    together with pointwise convergence on $\rep(s) >
    d+2\onenorm{\balpha}$, implies the limit $I(s)$ exists and is
    holomorphic on $D$ by Vitali-Porter theorem \cite{schiff1993normal}.

    \paragraph{Shell decomposition and local Taylor expansion.}
    For each $N$, decompose the integral into shells and shift each
    shell to the cube $\{\infnorm{\bx}\le 1/2\}$:
    \begin{align*}
        \int_{\frac{1}{2} \leq \infnorm{\bx} \leq N + \frac{1}{2}}
        g\!\left(\bx \frac{a}{N}\right)\frac{\bx^{2\balpha}}{|\bx|^s}\,\rd \bx
        &=
        \sum_{n=1}^N \int_{n-\frac{1}{2} \leq \infnorm{\bx} \leq n + \frac{1}{2}}
        g\!\left(\bx \frac{a}{N}\right)\frac{\bx^{2\balpha}}{|\bx|^s}\,\rd \bx \\
        &=
        \sum_{n=1}^N \sum_{\infnorm{\bi}=n}
        \int_{\infnorm{\bx}\le \frac{1}{2}}
        g\!\left((\bx+\bi)\frac{a}{N}\right)\frac{(\bx+\bi)^{2\balpha}}{|\bx+\bi|^s}\,\rd \bx.
    \end{align*}
    Hence we can write
    \begin{equation*}
        I_N(s) = \sum_{n=1}^N \delta_{n,N}(s),
    \end{equation*}
    where
    \begin{equation*}
        \delta_{n,N}(s)
        := \sum_{\infnorm{\bi}=n}
        \int_{\infnorm{\bx}\le \frac{1}{2}}
        \left[
        g\!\left((\bx+\bi)\frac{a}{N}\right)\frac{(\bx+\bi)^{2\balpha}}{|\bx+\bi|^s}
        - g\!\left(\bi\frac{a}{N}\right)\frac{\bi^{2\balpha}}{|\bi|^s}
        \right]\rd \bx.
    \end{equation*}
    For any fixed $N$, define
    \begin{equation*}
        f_{N}(\bx;s)
        := g\!\left(\bx\frac{a}{N}\right) \frac{\bx^{2\balpha}}{|\bx|^s}. 
    \end{equation*}
    Taylor's theorem with integral remainder gives, for
    $\infnorm{\bx}\le 1/2$,
    \begin{equation} \label{eq:taylor_bs_regular}
        f_{N}(\bx + \bi;s) - f_{N}(\bi;s)
        = \sum_{j=1}^d \partial_{x_j} f_{N}(\bi;s)\,x_j
        + \sum_{j,k=1}^d x_jx_k\int_0^1(1-t)
        \partial_{x_jx_k}^2f_N(\bi+t\bx;s)\,\rd t.
    \end{equation}

    \paragraph{Local boundedness of $\{I_N\}$ on $D$.}
    Now we show that $\{I_N\}$ is uniformly bounded on any compact
    subset of $D$. Fix an arbitrary compact set $K\subset D$ and set
    \begin{equation*}
        \sigma_K:=\min_{s\in K}\rep(s)>d+2\onenorm{\balpha}-2.
    \end{equation*}
    According to \cref{lem:drv_x_alpha_s}, for $j, k=1, \ldots, d$, we
    have
    \begin{equation*}
        \frac{\partial f_N}{\partial x_j}(\bx;s)
        = (\partial_j g)\!\left(\bx \frac{a}{N}\right) \frac{a}{N} \cdot \frac{\bx^{2\balpha}}{|\bx|^s} 
            + g\!\left(\bx \frac{a}{N}\right) \cdot \frac{1}{|\bx|^{1+s-2\onenorm{\balpha}}} P_{1, \balpha,j}\!\left(\frac{\bx}{|\bx|} \right),
    \end{equation*}
    and
    \begin{align*}
        \frac{\partial^2 f_N}{\partial x_j\partial x_k}(\bx;s)
        &= (\partial_{jk}^2 g)\!\left(\bx \frac{a}{N}\right) (\frac{a}{N})^2 \cdot \frac{\bx^{2\balpha}}{|\bx|^s} \\
        &\quad + \frac{a}{N} \frac{1}{|\bx|^{1+s-2\onenorm{\balpha}}} 
            \left[ (\partial_j g)\!\left(\bx \frac{a}{N}\right) P_{1, \balpha,k}\!\left(\frac{\bx}{|\bx|}\right) 
                + (\partial_k g)\!\left(\bx \frac{a}{N}\right) P_{1, \balpha,j}\!\left(\frac{\bx}{|\bx|}\right) \right] \\
        &\quad - g\!\left(\bx \frac{a}{N}\right) \cdot \frac{1+s-2\onenorm{\balpha}}{|\bx|^{2+s-2\onenorm{\balpha}}} 
            \frac{x_k}{|\bx|} \cdot P_{1, \balpha,j}\!\left(\frac{\bx}{|\bx|}\right) \\
        &\quad + g\!\left(\bx \frac{a}{N}\right) \cdot \frac{1}{|\bx|^{2+s-2\onenorm{\balpha}}}
        \left[ (\partial_kP_{1, \balpha,j})\!\left(\frac{\bx}{|\bx|}\right) (1 - \frac{x_k^2}{|\bx|^2}) 
            - \sum_{\ell\neq k} (\partial_\ell P_{1, \balpha,j})\!\left(\frac{\bx}{|\bx|}\right) \frac{x_{\ell} x_k}{|\bx|^2} \right],
    \end{align*}
    where $P_{1, \balpha,j}(\bx)$ is a polynomial of $\bx$.
    
    Given $\infnorm{\bi} \leq N$, let $n=\infnorm{\bi}$, then for
    $\infnorm{\bx}\le 1/2$, using $n \leq N$ it follows that
    \begin{equation*}
        \max_{j,k}\sup_{\infnorm{\bx}\le 1/2}
        \left|\partial^2_{x_jx_k} f_N(\bx+\bi;s)\right|
        \le C_K\, n^{2\onenorm{\balpha}-\rep(s)-2}
        \le C_K\, n^{2\onenorm{\balpha}-\sigma_K-2}.
    \end{equation*}
    Since $x_j$ is odd and the domain is centrally symmetric, the first
    order term in \eqref{eq:taylor_bs_regular} integrates to $0$ over
    $\{\infnorm{\bx}\le 1/2\}$. Consequently, the Hessian estimate above
    gives
    \begin{equation*}
        \left|\int_{\infnorm{\bx}\le \frac12}
        \left[f_N(\bi+\bx;s)-f_N(\bi;s)\right]\,\rd\bx\right|
        \le C_K\, n^{2\onenorm{\balpha}-\sigma_K-2}.
    \end{equation*}
    Summing over the shell $\{\bi\in\bbZ^d:\infnorm{\bi}=n\}$ and using
    $\#\{\bi:\infnorm{\bi}=n\}\le C_d n^{d-1}$ by
    \cref{lem:linfty_shell_count} gives
    \begin{equation} \label{eq:delta_nN_bound}
        \sup_{s\in K}|\delta_{n,N}(s)|
        \leq C_K\, n^{d+2\onenorm{\balpha}-\sigma_K-3},
        \qquad 1\leq n\leq N.
    \end{equation}
    Combining $I_N(s)=\sum_{n=1}^N\delta_{n,N}(s)$ with
    \eqref{eq:delta_nN_bound} gives
    \begin{equation*}
        \sup_{N\in\bbN_+}\sup_{s\in K}|I_N(s)|
        \leq \sum_{n=1}^{\infty} \sup_{N\geq n}\sup_{s\in K}|\delta_{n,N}(s)| 
        \leq C_K\sum_{n=1}^{\infty} n^{d+2\onenorm{\balpha}-\sigma_K-3}<\infty,
    \end{equation*}
    where the last series converges since
    $\sigma_K>d+2\onenorm{\balpha}-2$.

    \paragraph{Holomorphicity of the limit.}
    For each fixed $N$, the map $s\mapsto I_N(s)$ is holomorphic on $D$
    by \cref{lem:holomorphic}. If $\rep(s)>d+2\onenorm{\balpha}$, then
    $|\bx|^{2\onenorm{\balpha}-\rep(s)}$ is integrable on
    $\{\infnorm{\bx}\geq1/2\}$ and summable over $\bbZ^d\setminus\{0\}$.
    Since $\supp(g)\subset[-a,a]^d$, we have $g(a\bi/N)=0$ whenever
    $\infnorm{\bi}>N$. Note that $g$ is bounded and $g(a\bx/N)\to g(0)$
    pointwise, dominated convergence with respect to Lebesgue and
    counting measure gives
    \begin{equation*}
        \lim_{N\to\infty}I_N(s)
        =g(0)\left[
        \int_{\infnorm{\bx}\geq1/2}\frac{\bx^{2\balpha}}{|\bx|^s}\,\rd\bx
        -\sum_{\infnorm{\bi}=1}^{\infty}
        \frac{\bi^{2\balpha}}{|\bi|^s}\right].
    \end{equation*}
    Thus $I_N$ converges pointwise on the nonempty open subset
    $\{\rep(s)>d+2\onenorm{\balpha}\}$ of $D$. Since $\{I_N\}$ is
    locally bounded on $D$, Vitali-Porter theorem implies that $I_N\to
    I$ uniformly on compact subsets of $D$, and the limit $I(s)$ is
    holomorphic on $D$.
\end{proof}

\begin{lemma} \label{lem:bs_singular_part}
    Let $p \in \bbN$. Assume that $g\in\ccinf(\bbR^d)$ satisfies
    $\supp(g)\subset[-a,a]^d$ and
    \begin{equation*}
        g(\bx) = g(|\bx|), \quad \partial^{\bbeta} g(0) = 0, \quad 
        \text{for } 1 \leq \onenorm{\bbeta} \leq 2p+1.
    \end{equation*}
    Given multi-index $\balpha \in \bbN^d$ such that $0 \leq
    \onenorm{\balpha} \leq p$, denote 
    \begin{align*}
        D_1 &= \left\{ s \in \bbC \, : \, 0 < \rep(s) < d+ 2\onenorm{\balpha} \right\}, \\
        D_2 &= \left\{ s \in \bbC \, : \, d + 2\onenorm{\balpha} \leq \rep(s) < d+2\onenorm{\balpha} + 1, s \neq d+2\onenorm{\balpha} \right\},
    \end{align*}
    whose union $\Omega_{\balpha}:=D_1\cup D_2$ is connected and open.
    Then for $h > 0$ and $t >0$, 
    \begin{multline} \label{eq:analytic_continuation_bs_singular}
        \aco{D_1}{D_2} \left[ \int_{\infnorm{\bx} \leq t} g(\bx h) \frac{\bx^{2\balpha}}{|\bx|^s} \,\rd \bx \right](s) 
        = \int_{|\bx| \geq t, \, \infnorm{\bx} \leq t} g(\bx h) \frac{\bx^{2\balpha}}{|\bx|^s} \,\rd \bx \\
        + \int_{\bbS^{d-1}} \omega^{2\balpha} \rd S(\omega) 
            \left[ \int_{0}^{t} \left(g(rh) - g(0)\right) r^{d+2\onenorm{\balpha}-1-s} \,\rd r 
                + g(0)\frac{t^{d+2\onenorm{\balpha} - s}}{d+2\onenorm{\balpha} - s} \right].
    \end{multline}
    Furthermore, the limit $h\to 0^+$ commutes with analytic
    continuation in the sense that
    \begin{equation*}
        \lim_{h\rightarrow 0^+} \aco{D_1}{D_2} 
        \left[ \int_{\infnorm{\bx} \leq t} g(\bx h) \frac{\bx^{2\balpha}}{|\bx|^s} \,\rd \bx \right](s) 
        = \aco{D_1}{D_2} \left[ \lim_{h\rightarrow 0^+} 
        \int_{\infnorm{\bx} \leq t} g(\bx h) \frac{\bx^{2\balpha}}{|\bx|^s} \,\rd \bx \right](s).
    \end{equation*}
\end{lemma}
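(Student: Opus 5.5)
The plan is to compute the integral explicitly on $D_1$, producing a right-hand side that is visibly holomorphic on $\Omega_{\balpha}=D_1\cup D_2$ (indeed on $\{\rep(s)>0\}\setminus\{d+2\onenorm{\balpha}\}$). Uniqueness of analytic continuation then identifies this expression with $\aco{D_1}{D_2}$ of the left-hand side.

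\textbf{Step 1: identity on $D_1$.} Fix $s\in D_1$, so the integrand is absolutely integrable near the origin. We split the cube $\{\infnorm{\bx}\le t\}$ into the ball $\{|\bx|\le t\}$ and the complement $\{|\bx|\ge t,\ \infnorm{\bx}\le t\}$. On the ball we use polar coordinates $\bx=r\omega$. Since $g$ is radial, $g(\bx h)=g(rh)$, and the ball integral factorizes as
\begin{equation*}
\int_{\bbS^{d-1}}\omega^{2\balpha}\,\rd S(\omega)\int_0^t g(rh)\,r^{d+2\onenorm{\balpha}-1-s}\,\rd r .
\end{equation*}
Next we write $g(rh)=(g(rh)-g(0))+g(0)$. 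The $g(0)$ part integrates to $g(0)t^{d+2\onenorm{\balpha}-s}/(d+2\onenorm{\balpha}-s)$, which is valid because $\rep(s)<d+2\onenorm{\balpha}$. This gives \eqref{eq:analytic_continuation_bs_singular} for $s\in D_1$.

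\textbf{Step 2: holomorphy of the right-hand side on $\Omega_{\balpha}$.} There are three terms to check.
\begin{itemize}
\item The integral over $\{|\bx|\ge t,\ \infnorm{\bx}\le t\}$ is over a compact set avoiding the origin. Its integrand is entire in $s$ and uniformly bounded for $s$ in compacts, so \cref{lem:holomorphic} applies.
\item The term $g(0)t^{d+2\onenorm{\balpha}-s}/(d+2\onenorm{\balpha}-s)$ is meromorphic, with its only pole at $s=d+2\onenorm{\balpha}$, which is excluded from $\Omega_{\balpha}$.
\item For the radial remainder, the hypotheses $\partial^{\bbeta}g(0)=0$ for $1\le\onenorm{\bbeta}\le 2p+1$ together with Taylor's theorem give $|g(rh)-g(0)|\le C(rh)^{2p+2}$ for $r\le t$. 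The integrand is therefore bounded by $C h^{2p+2}r^{2p+1+d+2\onenorm{\balpha}-\rep(s)}$. This is integrable on $(0,t)$ whenever $\rep(s)<d+2\onenorm{\balpha}+2p+2$, a range that contains $\Omega_{\balpha}$ (since $\rep(s)<d+2\onenorm{\balpha}+1$ there). \Cref{lem:holomorphic} with this locally uniform majorant gives holomorphy.
\end{itemize}
Because $\Omega_{\balpha}$ is connected and open, the identity theorem shows that the right-hand side restricted to $D_2$ is the continuation $\aco{D_1}{D_2}$. This proves the first claim.

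\textbf{Step 3: commuting the limit $h\to0^+$.} On $D_1$, dominated convergence (with majorant $\|g\|_\infty|\bx|^{2\onenorm{\balpha}-\rep(s)}$ on the cube) shows that $\lim_{h\to0^+}\int_{\infnorm{\bx}\le t}g(\bx h)\bx^{2\balpha}|\bx|^{-s}\,\rd\bx=g(0)\int_{\infnorm{\bx}\le t}\bx^{2\balpha}|\bx|^{-s}\,\rd\bx$. Applying Step 1 with $g$ replaced by the constant $g(0)$, which is radial with vanishing derivatives, the continuation of this limit to $D_2$ is
\begin{equation*}
g(0)\int_{|\bx|\ge t,\,\infnorm{\bx}\le t}\frac{\bx^{2\balpha}}{|\bx|^{s}}\,\rd\bx+g(0)\int_{\bbS^{d-1}}\omega^{2\balpha}\,\rd S(\omega)\,\frac{t^{d+2\onenorm{\balpha}-s}}{d+2\onenorm{\balpha}-s}.
\end{equation*}
Now let $h\to0^+$ in formula \eqref{eq:analytic_continuation_bs_singular} for fixed $s\in D_2$:
\begin{itemize}
\item The first integral tends to $g(0)$ times the same integral, by bounded convergence on a compact set away from the origin.
\item The radial remainder is $O(h^{2p+2})\to0$ by the majorant from Step 2.
\end{itemize}
The two expressions therefore agree, which proves the second claim.

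The main obstacle is the radial-remainder term: one must verify that the vanishing-derivative hypothesis on $g$ gives a majorant that is integrable uniformly for $s$ in compacts reaching past the line $\rep(s)=d+2\onenorm{\balpha}$. The same majorant also yields the $O(h^{2p+2})$ decay used in Step 3. Everything else is bookkeeping with the identity theorem.
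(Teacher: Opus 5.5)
Your proposal is correct and follows essentially the same route as the paper. You split the cube into the ball and the corner region, use polar coordinates and radiality to subtract $g(0)$ in the radial integral, get holomorphy of each term on $\Omega_{\balpha}$ from the Taylor bound $|g(rh)-g(0)|\le C(rh)^{2p+2}$ together with \cref{lem:holomorphic}, and identify the continuation via the identity theorem. The interchange of $h\to0^+$ with continuation is then proved, as in the paper, by comparing the $h\to0^+$ limit of the explicit formula with the continuation of the dominated-convergence limit on $D_1$; your $O(h^{2p+2})$ bound on the radial remainder is simply a quantitative form of the paper's dominated-convergence step.
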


\begin{proof}
    We begin by splitting the cube into the ball plus the remaining
    corner region:
    \begin{equation*}
        \int_{\infnorm{\bx} \leq t} g(\bx h) \frac{\bx^{2\balpha}}{|\bx|^s} \,\rd \bx 
        = \int_{|\bx| \leq t} g(\bx h) \frac{\bx^{2\balpha}}{|\bx|^s} \,\rd \bx 
        + \int_{|\bx| \geq t, \, \infnorm{\bx} \leq t} g(\bx h) \frac{\bx^{2\balpha}}{|\bx|^s} \,\rd \bx,
    \end{equation*}
    where the second term is an entire function in $s$ by
    \cref{lem:holomorphic}. For the first term, using polar coordinates
    $x=r\omega$ with $\omega \in \bbS^{d-1}$ and radiality
    $g(\bx)=g(|\bx|)$ we have
    \begin{align*}
        \int_{|\bx|\le t} g(\bx h)\,\frac{\bx^{2\balpha}}{|\bx|^s}\,\rd\bx
        = \int_{\bbS^{d-1}} \omega^{2\balpha}\,\rd S(\omega)
        \int_{0}^{t} g(rh)\,r^{d+2\onenorm{\balpha}-1-s}\,\rd r.
    \end{align*}
    Thus, the integral converges and is analytic for $\rep(s) <
    d+2\onenorm{\balpha}$. We then rewrite the radial integral as
    \begin{align} \label{eq:integral_gr}
        \int_{0}^{t} g(rh) r^{d+2\onenorm{\balpha}-1-s} \,\rd r 
        = \int_{0}^{t} \bigl(g(rh) - g(0)\bigr) r^{d+2\onenorm{\balpha}-1-s} \,\rd r 
        + g(0) \frac{t^{d+2\onenorm{\balpha} - s}}{d+2\onenorm{\balpha} - s}. 
    \end{align}
    The second term in \eqref{eq:integral_gr} is meromorphic in $s$ with
    a simple pole at $s=d+2\onenorm{\balpha}$, and hence is holomorphic
    on $\Omega_{\balpha}$. For the first term, Taylor's theorem gives
    \begin{equation*}
        | g(rh) - g(0) | \leq C (rh)^{2p+2}. 
    \end{equation*}
    If $K\subset\Omega_{\balpha}$ is compact and $\sigma_K:=\max_{s\in
    K}\rep(s)$, then the integrand in the first term of
    \eqref{eq:integral_gr} is bounded near $r=0$ by $C_K
    r^{d+2\onenorm{\balpha}+2p+1-\sigma_K}$, which is integrable because
    $\sigma_K<d+2\onenorm{\balpha}+1$. Thus \cref{lem:holomorphic} shows
    that the first term is holomorphic on $\Omega_{\balpha}$ as well.
    Consequently, the right-hand side of
    \eqref{eq:analytic_continuation_bs_singular} defines a holomorphic
    function on $\Omega_{\balpha}$ that agrees with the original
    integral on $D_1$. Its restriction to $D_2$ is therefore the stated
    analytic continuation.

    Next we verify the interchange of orders of limit with $h$ and
    analytic continuation operator. By applying Lebesgue's dominated
    convergence theorem to the right-hand side of
    \eqref{eq:analytic_continuation_bs_singular}, we have
    \begin{align*}
        & \quad \lim_{h\rightarrow 0^+} \aco{D_1}{D_2} \left[ \int_{\infnorm{\bx} \leq t} g(\bx h) \frac{\bx^{2\balpha}}{|\bx|^s} \,\rd \bx \right](s) \\
        &= \int_{|\bx| \geq t, \, \infnorm{\bx} \leq t} g(0) \frac{\bx^{2\balpha}}{|\bx|^s} \,\rd \bx 
        + \int_{\bbS^{d-1}} \omega^{2\balpha} \rd S(\omega) 
            \left[ g(0)\frac{t^{d+2\onenorm{\balpha} - s}}{d+2\onenorm{\balpha} - s} \right].
    \end{align*} 
    On the other hand, for fixed $s\in D_1$, the dominated convergence
    theorem gives
    \begin{equation} \label{eq:lim_box_integral}
        \lim_{h\rightarrow 0^+} \int_{\infnorm{\bx} \leq t} g(\bx h) \frac{\bx^{2\balpha}}{|\bx|^s} \,\rd \bx 
        = \int_{\infnorm{\bx} \leq t} g(0) \frac{\bx^{2\balpha}}{|\bx|^s} \,\rd \bx.
    \end{equation}
    Applying the same ball--corner decomposition, the analytic
    continuation of \eqref{eq:lim_box_integral} to $D_2$ is
    \begin{equation*}
        \int_{|\bx| \geq t, \, \infnorm{\bx} \leq t} g(0) \frac{\bx^{2\balpha}}{|\bx|^s} \,\rd \bx 
        + \int_{\bbS^{d-1}} \omega^{2\balpha} \rd S(\omega) \left[ g(0)\frac{t^{d+2\onenorm{\balpha} - s}}{d+2\onenorm{\balpha} - s} \right].
    \end{equation*}
    Therefore, by comparing these two expressions we complete the proof
    of the lemma.
\end{proof}

\begin{proof}[Proof of \cref{lem:exist_bs_D2}]
    Since $\supp(g)\subset[-a,a]^d$, we restrict the integral in
    $u_N^{D_1}(s)$ to $\{\infnorm{\bx} \leq N + \frac{1}{2}\}$ as
    \begin{equation*}
        u_N^{D_1}(s) 
        = \int_{\bbR^d} g(\bx \frac{a}{N}) \frac{\bx^{2\balpha}}{|\bx|^s} \,\rd \bx
        = \int_{\infnorm{\bx} \leq N + \frac{1}{2}} g(\bx \frac{a}{N}) \frac{\bx^{2\balpha}}{|\bx|^s} \,\rd \bx.
    \end{equation*}
    
    To perform analytic continuation from $D_1$ to $D_2$, we decompose
    the difference $\bigl[u_N^{D_1} - v_N^{D_1}\bigr](s)$ into regular
    and singular parts as,
    \begin{align*}
        u_N^{D_1} - v_N^{D_1} 
        &= \int_{\infnorm{\bx} \leq N + \frac{1}{2}} g(\bx \frac{a}{N}) \frac{\bx^{2\balpha}}{|\bx|^s} \,\rd \bx 
            - \sum_{\infnorm{\bi}=1}^{N} g(\bi \frac{a}{N}) \frac{\bi^{2\balpha}}{|\bi|^s} \\
        &= \left( \int_{\frac{1}{2} \leq \infnorm{\bx} \leq N + \frac{1}{2}} g(\bx \frac{a}{N}) \frac{\bx^{2\balpha}}{|\bx|^s} \,\rd \bx - \sum_{\infnorm{\bi}=1}^{N} g(\bi \frac{a}{N}) \frac{\bi^{2\balpha}}{|\bi|^s} \right) 
            + \int_{\infnorm{\bx} \leq \frac{1}{2}} g(\bx\frac{a}{N}) \frac{\bx^{2\balpha}}{|\bx|^s} \,\rd \bx \\
        &:= I_N(s) + J_N^{D_1}(s).
    \end{align*}
    For the singular part, $J_N^{D_1}(s)$ is initially defined for $s\in
    D_1$. After applying \cref{lem:bs_singular_part} with $h=a/N$ and
    $t=1/2$, $J_N^{D_1}(s)$ and $\lim_{N\to\infty} J_N^{D_1}(s)$ possess
    analytic continuation over $D_1 \cup D_2$ and we further have
    \begin{equation*}
        \lim_{N\rightarrow\infty} \aco{D_1}{D_2} \left[ J^{D_1}_N \right] (s) 
        = \aco{D_1}{D_2} \left[ \lim_{N\rightarrow\infty} J_N^{D_1} \right] (s).
    \end{equation*}
    For the regular part, $I_N(s)$ is analytic on $D_1 \cup D_2$, and by
    \cref{lem:bs_regular_part}, the limit
    $I(s):=\lim_{N\to\infty}I_N(s)$ is analytic on
    $D=\{s\in\bbC:\rep(s)>d+2\onenorm{\balpha}-2\}$.  
    Introduce the overlap strip between $D_1$ and $D$ as 
    \begin{equation*}
        D_0 := \left\{ s\in\bbC:\ d+2\onenorm{\balpha}-1<\rep(s)<d+2\onenorm{\balpha}\right\}
        \subset (D \cap D_1).
    \end{equation*}
    Then the analyticity of $I_N(s)$ and $I(s)$ over $D_0 \cup D_2$
    gives
    \begin{equation*}
        \lim_{N\rightarrow\infty} \aco{D_0}{D_2} \left[ I^{D_0}_N \right] (s) 
        = \lim_{N\rightarrow\infty} I^{D_2}_N(s) = I^{D_2}(s) = \aco{D_0}{D_2} \left[I^{D_0}\right] (s) 
        = \aco{D_0}{D_2} \left[ \lim_{N\rightarrow\infty}I_N^{D_0} \right] (s).
    \end{equation*}

    The relative positions of the four domains used in this argument are
    summarized in \cref{fig:domains_analytic_continuation}.

    \begin{figure}[htbp]
        \centering
        \begin{tikzpicture}[x=1.05cm,y=1.0cm,>=Stealth,font=\small]
            \tikzset{
                axis/.style={black!70,line width=0.55pt},
                boundary/.style={black!45,densely dashed,line width=0.7pt},
                domainarrow/.style={<->,line width=1.35pt},
                domainray/.style={<->,line width=1.35pt},
                domainlabel/.style={font=\small\bfseries,inner sep=1pt},
            }
            \def\xmin{-0.55}
            \def\xregular{1.05}
            \def\xoverlap{2.65}
            \def\xpole{4.25}
            \def\xright{6.55}
            \def\xmax{7.45}
            \def\ymin{-2.05}
            \def\ymax{2.05}

            \fill[blue!8] (0,0) rectangle (\xpole,\ymax);
            \fill[green!10] (\xpole,0) rectangle (\xright,\ymax);
            \fill[orange!10] (\xregular,\ymin) rectangle (\xmax,0);
            \fill[violet!16] (\xoverlap,\ymin) rectangle (\xpole,0);

            \draw[axis,->] (\xmin,0) -- (\xmax,0)
                node[below right] {$\rep(s)$};
            \draw[axis,->] (0,\ymin) -- (0,\ymax+0.25)
                node[above left] {$\operatorname{Im}(s)$};
            \draw[boundary] (\xregular,\ymin) -- (\xregular,\ymax);
            \draw[boundary] (\xoverlap,\ymin) -- (\xoverlap,\ymax);
            \draw[boundary] (\xpole,\ymin) -- (\xpole,\ymax);
            \draw[boundary] (\xright,\ymin) -- (\xright,\ymax);

            \draw[domainarrow,blue!65!black]
                (0,1.25) -- (\xpole,1.25)
                node[midway,above=3pt,domainlabel,text=blue!65!black]
                {$D_1$};
            \draw[domainarrow,green!45!black]
                (\xpole,1.25) -- (\xright,1.25)
                node[midway,above=3pt,domainlabel,text=green!45!black]
                {$D_2$};

            \draw[domainarrow,violet!70!black]
                (\xoverlap,-0.68) -- (\xpole,-0.68)
                node[midway,below=3pt,domainlabel,text=violet!70!black]
                {$D_0$};
            \draw[domainray,orange!80!black]
                (\xregular,-1.35) -- (\xmax,-1.35)
                node[midway,below=3pt,domainlabel,text=orange!80!black]
                {$D$};

            \node[blue!65!black,below left=2pt] at (0,0) {$0$};
            \node[orange!80!black,below=2pt] at (\xregular,0)
                {$s_{\balpha}-2$};
            \node[violet!70!black,below=2pt] at (\xoverlap,0)
                {$s_{\balpha}-1$};
            \node[below=2pt] at (\xpole,0) {$s_{\balpha}$};
            \node[green!45!black,above=2pt] at (\xright,0)
                {$s_{\balpha}+1$};

            \fill[white] (\xpole,0) circle (2.5pt);
            \draw[red!75!black,line width=0.9pt] (\xpole,0) circle (2.5pt);

        \end{tikzpicture}
        \caption{Schematic domains used in the proof of
        \cref{lem:exist_bs_D2}. Here $s_{\balpha}=d+2\onenorm{\balpha}$,
        $D_1=\{0<\rep(s)<s_{\balpha}\}$,
        $D_2=\{s_{\balpha}\leq\rep(s)<s_{\balpha}+1\}\setminus\{s_{\balpha}\}$,
        $D_0=\{s_{\balpha}-1<\rep(s)<s_{\balpha}\}$, and
        $D=\{\rep(s)>s_{\balpha}-2\}$. Horizontal lengths are schematic.
        To keep overlapping colors distinguishable, the shading is split
        across the real axis; each domain itself is a full vertical
        strip extending into both half-planes.}
        \label{fig:domains_analytic_continuation}
    \end{figure}

    Combine the two results, since $D_0 \subset D_1$,
    $I_N^{D_0}(s)+J_N^{D_0}(s)$ and $\lim_{N\to\infty}
    I_N^{D_0}(s)+J_N^{D_0}(s)$ possess analytic continuation over $D_0
    \cup D_2$. Finally, by the identity theorem of analytic function on
    connected open sets, we have
    \begin{align*}
        \aco{D_1}{D_2} \left[ \lim_{N\rightarrow \infty} \left( u_N^{D_1} - v_N^{D_1} \right) \right] (s) 
        &= \aco{D_0}{D_2} \left[ \lim_{N\rightarrow \infty} \left( I_N^{D_0} + J_N^{D_0} \right) \right] (s) 
        = \lim_{N\rightarrow\infty} \aco{D_0}{D_2} \left[ I^{D_0}_N + J^{D_0}_N \right] (s) \\
        &= \lim_{N\rightarrow \infty} \left( \aco{D_1}{D_2} \left[u_N^{D_1} - v_N^{D_1}\right](s) \right).
    \end{align*}
\end{proof}

\begin{proof}[Proof of \cref{lem:expression_bs_D3}]
    Since $g$ is compactly supported in $[-a,a]^d$, by changing
    variables $\by=(a/N)\bx$ we have
    \begin{equation*}
        u_N^{D_1}(s)
        = \int_{\infnorm{\bx}\le N} g\!\left(\bx\frac{a}{N}\right)\frac{\bx^{2\balpha}}{|\bx|^s}\,\rd\bx
        = \left(\frac{N}{a}\right)^{d+2\onenorm{\balpha}-s}
        \int_{\infnorm{\by}\le a} g(\by)\frac{\by^{2\balpha}}{|\by|^s}\,\rd\by.    
    \end{equation*}
    The prefactor is entire in $s$, and by \cref{lem:bs_singular_part}
    the analytic continuation of the $\by$-integral to $D_3$ exists and
    does not depend on $N$. Since $s\in D_3$ implies
    $\rep(s)>d+2\onenorm{\balpha}$, we obtain
    \begin{equation*}
        \lim_{N\to\infty}\aco{D_1}{D_3}[u_N^{D_1}](s)
        = \lim_{N\rightarrow\infty} \left(\frac{N}{a}\right)^{d+2\onenorm{\balpha}-s} \aco{D_1}{D_3}
        \left[\int_{\infnorm{\by} \leq a} g(\by) \frac{\by^{2\balpha}}{|\by|^s} \rd \by\right](s) 
        = 0.
    \end{equation*}
    For $v_N^{D_1}(s)=\sum_{\infnorm{\bi}=1}^{N} g(\bi
    a/N)\,\bi^{2\balpha}/|\bi|^s$, analytic continuation does not change
    it because it is a finite sum. Moreover, for $s\in D_3$ the series
    $\sum_{\infnorm{\bi}=1}^{\infty} |\bi|^{2\onenorm{\balpha}-\rep(s)}$
    converges, so dominated convergence yields
    \begin{equation*}
        \lim_{N\to\infty}\aco{D_1}{D_3}[v_N^{D_1}](s)
        = \lim_{N\rightarrow\infty} \sum_{\infnorm{\bi}=1}^{N} g(\bi \frac{a}{N}) \frac{\bi^{2\balpha}}{|\bi|^s}
        = \lim_{N\rightarrow\infty} \sum_{\infnorm{\bi}=1}^{\infty} g(\bi \frac{a}{N}) \frac{\bi^{2\balpha}}{|\bi|^s}
        = \sum_{\infnorm{\bi}=1}^{\infty}\frac{\bi^{2\balpha}}{|\bi|^s},
    \end{equation*}
    since $g(\bi a/N)\to g(0)=1$ for each fixed $\bi$. Combining the two
    limits gives
    \begin{equation*}
        b_{\balpha}^{D_3}(s)
        = \lim_{N\to\infty}\left(\aco{D_1}{D_3}[u_N^{D_1}](s)-\aco{D_1}{D_3}[v_N^{D_1}](s)\right)
        = -\sum_{\infnorm{\bi}=1}^{\infty}\frac{\bi^{2\balpha}}{|\bi|^s},
        \qquad s\in D_3,
    \end{equation*}
    as claimed.
\end{proof}

\section{Proofs for generalized zeta function}
\label{app:generalized_zeta_function}

We first introduce the multivariate Hermite polynomials. 

\begin{definition}[Multivariate Hermite polynomials] \label{def:hermite_poly}
    Let $A\in \bbR^{d\times d}$ be symmetric, the multivariate Hermite
    polynomial $H_{\balpha}(\bx; A)$ for multi-index $\balpha \in
    \bbN^d$ is defined as
    \begin{equation}
        H_{\balpha}(\bx; A)
        := (-1)^{\onenorm{\balpha}}\re^{\bx^\top A \bx}\,\partial_{\bx}^{\balpha}\!\left(\re^{-\bx^\top A \bx}\right),
        \qquad \bx\in\bbR^d.
    \end{equation}
    In particular, $H_{\bzero}(\bx; A) = 1$ for $\balpha = \bzero$;
    $H_{\be_j}(\bx; A) = 2 (A \bx)_j$ for the standard unit vector
    $\be_j$. When $A = I_d$, we write $H_{\balpha}(\bx) =
    H_{\balpha}(\bx; I_d)$.
\end{definition}

The following recursion provides an efficient way to compute
$H_{\balpha}(\bx;A)$.

\begin{lemma}
    Given multi-index $\balpha \in \bbN^d$ and $j \in \{1, \ldots, d\}$,
    the multivariate Hermite polynomials with symmetric matrix $A$
    defined in \cref{def:hermite_poly} satisfy the following recursion
    relation,
    \begin{equation} \label{eq:hermite_recursion}
        \begin{split}
            H_{\balpha + \be_j}(\bx; A) 
            &= 2 (A \bx)_j H_{\balpha}(\bx; A) - \partial_{x_j} H_{\balpha}(\bx; A) \\
            &= 2 (A \bx)_j H_{\balpha}(\bx; A) - \sum_{k=1}^d 2 A_{jk} \alpha_k H_{\balpha - \be_k}(\bx; A),            
        \end{split}
    \end{equation}
    where we adopt the convention $H_{\bgamma}(\bx;A)\equiv 0$ if any
    component of $\bgamma$ is negative.
\end{lemma}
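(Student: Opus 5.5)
The first line of \eqref{eq:hermite_recursion} comes from differentiating the defining formula once more in $x_j$; the second line then follows by computing $\partial_{x_j}H_{\balpha}$ by induction on $\onenorm{\balpha}$. Throughout, write $E(\bx):=\re^{-\bx^\top A\bx}$. Because $A$ is symmetric, $\partial_{x_j}E = -2(A\bx)_j E$.

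\emph{First identity.} By definition,
\begin{equation*}
    \partial^{\balpha}_{\bx}E = (-1)^{\onenorm{\balpha}}\re^{-\bx^\top A\bx}H_{\balpha}(\bx;A),
\end{equation*}
so that
\begin{equation*}
    \partial^{\balpha+\be_j}_{\bx}E
    = (-1)^{\onenorm{\balpha}}\bigl(-2(A\bx)_j H_{\balpha} + \partial_{x_j}H_{\balpha}\bigr)E .
\end{equation*}
Multiplying by $(-1)^{\onenorm{\balpha}+1}\re^{\bx^\top A\bx}$ gives
\begin{equation*}
    H_{\balpha+\be_j} = 2(A\bx)_jH_{\balpha} - \partial_{x_j}H_{\balpha}.
\end{equation*}
This step is purely mechanical.

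\emph{Second identity.} It suffices to prove the derivative formula
\begin{equation*}
    \partial_{x_j}H_{\balpha}(\bx;A) = \sum_{k=1}^d 2A_{jk}\alpha_k H_{\balpha-\be_k}(\bx;A)
\end{equation*}
for all $\balpha$ and $j$, and we prove it by induction on $\onenorm{\balpha}$. The base case $\balpha=\bzero$ holds because both sides vanish, $H_{\bzero}$ being the constant $1$ and every term on the right containing the factor $\alpha_k=0$. For the inductive step, assume the formula for $\balpha$ and for all smaller multi-indices. Fix $\ell$ and differentiate the first identity $H_{\balpha+\be_\ell} = 2(A\bx)_\ell H_{\balpha} - \partial_{x_\ell}H_{\balpha}$ with respect to $x_j$:
\begin{equation*}
    \partial_{x_j}H_{\balpha+\be_\ell}
    = 2A_{\ell j}H_{\balpha} + 2(A\bx)_\ell\,\partial_{x_j}H_{\balpha} - \partial_{x_\ell}\partial_{x_j}H_{\balpha}.
\end{equation*}
Insert the induction hypothesis for $\partial_{x_j}H_{\balpha}$ into the last two terms; since partial derivatives commute, this yields
\begin{equation*}
    \partial_{x_j}H_{\balpha+\be_\ell}
    = 2A_{\ell j}H_{\balpha} + \sum_k 2A_{jk}\alpha_k\bigl(2(A\bx)_\ell H_{\balpha-\be_k} - \partial_{x_\ell}H_{\balpha-\be_k}\bigr).
\end{equation*}
By the first identity applied to $\balpha-\be_k$, the bracket equals $H_{\balpha-\be_k+\be_\ell}$ whenever $\alpha_k\ge 1$; when $\alpha_k=0$ the term is killed by its coefficient, consistent with the stated convention. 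Hence
\begin{equation*}
    \partial_{x_j}H_{\balpha+\be_\ell}
    = 2A_{j\ell}H_{\balpha} + \sum_k 2A_{jk}\alpha_k H_{\balpha+\be_\ell-\be_k},
\end{equation*}
using $A_{\ell j}=A_{j\ell}$. This is exactly $\sum_k 2A_{jk}(\balpha+\be_\ell)_k H_{\balpha+\be_\ell-\be_k}$: the extra unit in the $\ell$-th component of $\balpha+\be_\ell$ accounts for the term $2A_{j\ell}H_{\balpha}$, which completes the induction.

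The only points that need care are the bookkeeping of the boundary case $\alpha_k=0$, handled by the convention, and the use of the symmetry of $A$, which enters twice: in $\partial_{x_j}E=-2(A\bx)_jE$ and in identifying $A_{\ell j}$ with $A_{j\ell}$. An alternative route would be to use the generating function $\re^{2\bt^\top A\bx - \bt^\top A\bt}$ and compare coefficients, but the induction above is more self-contained.
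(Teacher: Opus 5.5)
Your proof is correct, but it runs in the opposite direction from the paper's. The paper works with the generating function $\sum_{\balpha} H_{\balpha}(\bx;A)\,\bt^{\balpha}/\balpha! = \re^{2\bt^\top A\bx-\bt^\top A\bt}$, which it obtains from the definition by Taylor-expanding $\re^{-(\bx-\bt)^\top A(\bx-\bt)}$ in $\bt$. Differentiating in $t_j$ and matching coefficients gives the three-term recursion (the second line) directly. Differentiating in $x_j$ gives $\partial_{x_j}H_{\balpha}=\sum_k 2A_{jk}\alpha_k H_{\balpha-\be_k}$. The first line is then read off by combining the two.

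You instead get the first line at once from the Rodrigues-type definition by taking one more derivative. You then derive the identity $\partial_{x_j}H_{\balpha}=\sum_k 2A_{jk}\alpha_kH_{\balpha-\be_k}$ by induction on $\onenorm{\balpha}$, using only that first line and the symmetry of $A$. The inductive step checks out, including the vanishing $\alpha_k=0$ terms and the extra term $2A_{j\ell}H_{\balpha}$ coming from the $\ell$-th component of $\balpha+\be_\ell$.

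Your route is entirely finite and algebraic: no power series, no termwise differentiation, no coefficient comparison. It also makes explicit that the derivative identity follows from the first-line recursion alone. The paper's route is shorter and treats both identities uniformly from a single closed-form object. It is also the natural tool for further identities, such as explicit coefficients of $H_{\balpha}$. Its only extra cost is invoking the Taylor expansion of an entire function.
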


\begin{proof}
    By Taylor's formula and \cref{def:hermite_poly}, the generating
    function is
    \begin{equation*}
        \sum_{\balpha\in\bbN^d} H_{\balpha}(\bx;A)
        \frac{\bt^{\balpha}}{\balpha!}
        = \re^{\bx^\top A\bx}\re^{-(\bx-\bt)^\top A(\bx-\bt)}
        = \re^{2\bt^\top A\bx-\bt^\top A\bt}.
    \end{equation*}
    Differentiating with respect to $t_j$ gives
    \begin{equation*}
        \sum_{\balpha\in\bbN^d} H_{\balpha+\be_j}(\bx;A)
        \frac{\bt^{\balpha}}{\balpha!}
        = \left(2(A\bx)_j-2(A\bt)_j\right)
        \sum_{\balpha\in\bbN^d} H_{\balpha}(\bx;A)
        \frac{\bt^{\balpha}}{\balpha!}.
    \end{equation*}
    Comparing the coefficient of $\bt^{\balpha}/\balpha!$ yields
    \begin{equation*}
        H_{\balpha+\be_j}(\bx;A)
        =2(A\bx)_jH_{\balpha}(\bx;A)
        -\sum_{k=1}^d2A_{jk}\alpha_kH_{\balpha-\be_k}(\bx;A).
    \end{equation*}
    On the other hand, differentiating the same generating function with
    respect to $x_j$ gives
    \begin{equation*}
        \sum_{\balpha\in\bbN^d} \partial_{x_j}H_{\balpha}(\bx;A)
        \frac{\bt^{\balpha}}{\balpha!}
        =2(A\bt)_j
        \sum_{\balpha\in\bbN^d} H_{\balpha}(\bx;A)
        \frac{\bt^{\balpha}}{\balpha!}.
    \end{equation*}
    Comparing coefficients again gives
    $\partial_{x_j}H_{\balpha}(\bx;A)
    =\sum_{k=1}^d2A_{jk}\alpha_kH_{\balpha-\be_k}(\bx;A)$, which is
    exactly \eqref{eq:hermite_recursion}.
\end{proof}

\begin{corollary} \label{cor:high_deri_gaussian}
    Let $A\in\bbR^{d\times d}$ be symmetric, $c>0$, and
    $\balpha\in\bbN^d$. Then
    \begin{equation*}
        \partial_{\bx}^{\balpha}\,\re^{-c\,\bx^\top A\bx}
        = (-1)^{\onenorm{\balpha}} c^{\onenorm{\balpha}/2}\,
            H_{\balpha}(\sqrt{c}\,\bx;A)\,\re^{-c\,\bx^\top A\bx}.
    \end{equation*}
\end{corollary}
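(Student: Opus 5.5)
The plan is a change of variables $\by=\sqrt{c}\,\bx$ combined with the definition of $H_{\balpha}(\cdot;A)$ in \cref{def:hermite_poly}. Set $F(\by):=\re^{-\by^\top A\by}$. Then $\re^{-c\,\bx^\top A\bx}=F(\sqrt{c}\,\bx)$, since $(\sqrt{c}\,\bx)^\top A(\sqrt{c}\,\bx)=c\,\bx^\top A\bx$. The definition, rearranged, reads
\begin{equation*}
    \partial_{\by}^{\balpha}F(\by)=(-1)^{\onenorm{\balpha}}H_{\balpha}(\by;A)\,\re^{-\by^\top A\by}.
\end{equation*}
Multiplying by $\re^{-\by^\top A\by}$ is legitimate because $\re^{\by^\top A\by}\re^{-\by^\top A\by}=1$; no positivity of $A$ is required.

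Next, apply the chain rule to the linear map $\bx\mapsto\sqrt{c}\,\bx$. Each partial derivative $\partial_{x_j}$ acting on $F(\sqrt{c}\,\bx)$ produces $\sqrt{c}\,(\partial_{y_j}F)(\sqrt{c}\,\bx)$. Iterating this coordinatewise, we obtain
\begin{equation*}
    \partial_{\bx}^{\balpha}\bigl[F(\sqrt{c}\,\bx)\bigr]=c^{\onenorm{\balpha}/2}(\partial_{\by}^{\balpha}F)(\sqrt{c}\,\bx).
\end{equation*}
A short induction on $\onenorm{\balpha}$ confirms this, since each derivative contributes one factor $\sqrt{c}$.

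Finally, evaluate the identity from the first step at $\by=\sqrt{c}\,\bx$ and substitute it into the previous display. This gives
\begin{equation*}
    \partial_{\bx}^{\balpha}\,\re^{-c\,\bx^\top A\bx}=(-1)^{\onenorm{\balpha}}c^{\onenorm{\balpha}/2}H_{\balpha}(\sqrt{c}\,\bx;A)\,\re^{-c\,\bx^\top A\bx},
\end{equation*}
which is the claim.

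There is no real obstacle here. The only point needing care is bookkeeping of the scaling factor $c^{\onenorm{\balpha}/2}$ under the chain rule. Because $c>0$, $\sqrt{c}$ is real and no branch choice arises.
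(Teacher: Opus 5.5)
Your argument is correct and is essentially the derivation the paper has in mind. The paper states the corollary without proof, directly after \cref{def:hermite_poly}, as an immediate consequence of that definition combined with the chain rule for the dilation $\bx\mapsto\sqrt{c}\,\bx$, which is exactly what you do.
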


Now we are ready to prove \cref{thm:zeta_function}.

\begin{proof}[Proof of \cref{thm:zeta_function}]
    Firstly, note that for $\lambda > 0$ and $\rep(s) > 0$ one has
    \begin{equation*}
        \frac{1}{\lambda^s} = \frac{1}{\Gamma(s)} \int_{0}^{\infty} \re^{-\lambda x} x^{s-1} \,\rd x.
    \end{equation*}
    Then for $\rep(s) > d+2\onenorm{\balpha}$, we have
    \begin{equation*}
        Z_{2\balpha}(s) 
        = \sum_{\bn \in \bbZ^{d} \setminus \{\bzero\}} \frac{\bn^{2\balpha}}{|\bn|^s} 
        = \sum_{\infnorm{\bn}=1}^{\infty} \bn^{2\balpha} \frac{\pi^{s/2}}{\Gamma(s/2)} 
            \int_{0}^{\infty} \re^{-|\bn|^2\pi t} \, t^{\frac{s}{2} -1} \,\rd t.
    \end{equation*}
    Then we split the integral at $t=1$ and change variables $t\mapsto
    1/t$ on $(0,1)$ as 
    \begin{align*}
        \frac{\Gamma(s/2)}{\pi^{s/2}} Z_{2\balpha}(s) 
        &= \sum_{\infnorm{\bn}=1}^{\infty} \bn^{2\balpha} 
        \left( \int_{1}^{\infty} \re^{-|\bn|^2\pi t} \, t^{\frac{s}{2} -1} \,\rd t 
            + \int_{0}^{1} \re^{-|\bn|^2\pi t} \, t^{\frac{s}{2} -1} \,\rd t \right) \\
        &= \sum_{\infnorm{\bn}=1}^{\infty} \bn^{2\balpha} \int_{1}^{\infty} \re^{-|\bn|^2\pi t} \, t^{\frac{s}{2} -1} \,\rd t 
        + \int_{1}^{\infty} \sum_{\infnorm{\bn}=1}^{\infty} 
            \bn^{2\balpha} \re^{-\frac{\pi}{t} |\bn|^2} \left(\frac{1}{t}\right)^{\frac{s}{2}+1} \,\rd t
    \end{align*}
    with the interchange of summation and integral justified by the
    absolute convergence when $\rep(s) > d+2\onenorm{\balpha}$. Further,
    for the second term, by introducing the Kronecker $\delta$ notation
    we have
    \begin{align*}
        \int_{1}^{\infty} \sum_{\infnorm{\bn}=1}^{\infty} \bn^{2\balpha} \re^{-\frac{\pi}{t} |\bn|^2} \left(\frac{1}{t}\right)^{\frac{s}{2}+1} \,\rd t
        &= \int_{1}^{\infty} \sum_{\bn \in \bbZ^d} \bn^{2\balpha} \re^{-\frac{\pi}{t} |\bn|^2} \left(\frac{1}{t}\right)^{\frac{s}{2}+1} \,\rd t 
        - \delta_{0, \balpha} \int_{1}^{\infty} \left(\frac{1}{t}\right)^{\frac{s}{2}+1} \,\rd t \\
        &= \int_{1}^{\infty} \sum_{\bn \in \bbZ^d} \bn^{2\balpha} \re^{-\frac{\pi}{t} |\bn|^2} \left(\frac{1}{t}\right)^{\frac{s}{2}+1} \,\rd t 
        - \delta_{0, \balpha} \frac{2}{s} .
    \end{align*}
    Define
    \begin{equation*}
        f(\bx) := \bx^{2\balpha} \re^{-\frac{\pi}{t}|\bx|^2}, \qquad \bx \in \bbR^d.
    \end{equation*}
    Then we compute the Fourier transform of $f$ as 
    \begin{align*}
        \fto{f}(\by) 
        &= \fto{\bx^{2\balpha} \re^{-\frac{\pi}{t}|\bx|^2}} (\by) 
        = \left(\frac{\mi}{2\pi}\right)^{2\onenorm{\balpha}} \left( \partial^{2\balpha}_{\by} \fto{\re^{-\frac{\pi}{t}|\bx|^2}}(\by) \right) \\
        &= \left(\frac{\mi}{2\pi}\right)^{2\onenorm{\balpha}} \partial^{2\balpha}_{\by} \left( t^{d/2} \re^{-\pi t|\by|^2} \right) \\
        &= \left(\frac{\mi}{2\pi}\right)^{2\onenorm{\balpha}} t^{d/2} \cdot (-1)^{2\onenorm{\balpha}} (\pi t)^{\onenorm{\balpha}} H_{2\balpha}(\sqrt{\pi t} \, \by) \, \re^{-\pi t |\by|^2} \\
        &= \left(\frac{-1}{4\pi}\right)^{\onenorm{\balpha}} t^{\frac{d}{2}+\onenorm{\balpha}} \re^{-\pi t |\by|^2} H_{2\balpha}(\sqrt{\pi t} \, \by),
    \end{align*}
    where we have used the multivariate Hermite polynomials for the
    derivatives according to \cref{cor:high_deri_gaussian}. Therefore,
    apply the Poisson summation formula introduced in
    \cref{lem:poisson_summation} to $f(\bx)$, we have 
    \begin{equation*}
        \sum_{\bn \in \bbZ^d} \bn^{2\balpha} \re^{-\frac{\pi}{t} |\bn|^2} 
        = \sum_{\bk \in \bbZ^d} \left(\frac{-1}{4\pi}\right)^{\onenorm{\balpha}} 
            t^{\frac{d}{2}+\onenorm{\balpha}} \re^{-\pi t |\bk|^2} H_{2\balpha}(\sqrt{\pi t} \, \bk).
    \end{equation*}
    Thus, for $\rep(s) > d+2\onenorm{\balpha}$, 
    \begin{align*}
        & \quad \int_{1}^{\infty} \sum_{\bn \in \bbZ^d} 
            \bn^{2\balpha} \re^{-\frac{\pi}{t} |\bn|^2} \left(\frac{1}{t}\right)^{\frac{s}{2}+1} \rd t 
        = \int_{1}^{\infty} \sum_{\bk \in \bbZ^d} 
            \left(\frac{-1}{4\pi}\right)^{\onenorm{\balpha}} \re^{-\pi t |\bk|^2} H_{2\balpha}(\sqrt{\pi t} \, \bk) \, t^{\frac{d}{2}+\onenorm{\balpha}-\frac{s}{2} - 1} \rd t \\
        &= \left(\frac{-1}{4\pi}\right)^{\onenorm{\balpha}} 
            \left( \sum_{\infnorm{\bk}=1}^{\infty} \int_{1}^{\infty} \re^{-\pi t |\bk|^2} H_{2\balpha}(\sqrt{\pi t} \, \bk) \, t^{\frac{d}{2}+\onenorm{\balpha}-\frac{s}{2} - 1} \rd t 
            + H_{2\balpha}(0) \int_{1}^{\infty} t^{\frac{d}{2}+\onenorm{\balpha}-\frac{s}{2} - 1} \rd t \right) \\
        &= \left(\frac{-1}{4\pi}\right)^{\onenorm{\balpha}} 
            \left( \sum_{\infnorm{\bk}=1}^{\infty} \int_{1}^{\infty} \re^{-\pi t |\bk|^2} H_{2\balpha}(\sqrt{\pi t} \, \bk) \, t^{\frac{d}{2}+\onenorm{\balpha}-\frac{s}{2} - 1} \rd t 
            - \frac{2H_{2\balpha}(0)}{d+2\onenorm{\balpha}-s} \right).
    \end{align*}
    Hence, we obtain the functional equation as
    \begin{equation*}
        \begin{split}
            \frac{\Gamma(s/2)}{\pi^{s/2}} Z_{2\balpha}(s) 
            &= \left(-\frac{1}{4\pi}\right)^{\onenorm{\balpha}} \frac{2H_{2\balpha}(0)}{s - d - 2\onenorm{\balpha}} 
            - \frac{2\delta_{0, \balpha}}{s} 
            + \sum_{\bn \in \bbZ^{d} \setminus \{\bzero\}} \bn^{2\balpha} \int_{1}^{\infty} \re^{-|\bn|^2\pi t} \, t^{\frac{s}{2} -1} \rd t \\
            &\quad + \left(-\frac{1}{4\pi}\right)^{\onenorm{\balpha}} \sum_{\bk \in \bbZ^{d} \setminus \{\bzero\}} \int_{1}^{\infty} \re^{-|\bk|^2 \pi t} H_{2\balpha}(\sqrt{\pi t} \, \bk) \, t^{\frac{d-s}{2}+\onenorm{\balpha} - 1} \rd t.        
        \end{split}
    \end{equation*}

    We now verify that the functional equation yields the meromorphic
    continuation of $Z_{2\balpha}(s)$. The two series appearing there are
    entire. Take the first series as an example, rewrite it with the
    upper incomplete gamma functions as
    \begin{equation*}
        \sum_{\infnorm{\bn}=1}^{\infty} \bn^{2\balpha} \int_{1}^{\infty} \re^{-|\bn|^2\pi t} \, t^{\frac{s}{2} -1} \,\rd t 
        = \frac{1}{\pi^{s/2}} \sum_{\infnorm{\bn}=1}^{\infty} \left(\frac{\bn}{|\bn|}\right)^{2\balpha} 
        \frac{1}{|\bn|^{s-2\onenorm{\balpha}}} \Gamma(\frac{s}{2}, |\bn|^2 \pi). 
    \end{equation*}
    According to the tail estimate in \cref{lem:tail_estimate}, let $N$
    be the smallest integer such that $\pi N^2 \geq \max \{\rep(s)/2, (d
    + 2\onenorm{\balpha})/2 \}$, then we have
    \begin{equation*}
        \sum_{\infnorm{\bn}=N+1}^{\infty} \left(\frac{\bn}{|\bn|}\right)^{2\balpha} \frac{1}{|\bn|^{s-2\onenorm{\balpha}}} \Gamma(\frac{s}{2}, |\bn|^2 \pi) 
        \leq C N^{d+2\onenorm{\balpha}} \re^{-\pi N^2}.
    \end{equation*}
    Hence, the series is uniformly convergent in any given compact
    subset of the complex plane. By the Weierstrass theorem, this series
    is holomorphic with respect to $s$ over the complex plane. The
    second series is treated similarly: expanding the Hermite polynomial
    $H_{2\balpha}$ reduces it to a finite linear combination of series
    of the same type, each of which is entire by the same argument.

    Therefore the right-hand side of the functional equation is meromorphic,
    with possible poles only at $s=0$ and $s=d+2\onenorm{\balpha}$.
    Since the two series in the functional equation are entire, for $s$ near $0$ we have
    \begin{equation*}
        \frac{\Gamma(s/2)}{\pi^{s/2}}Z_{2\balpha}(s)
        = -\frac{2\delta_{0,\balpha}}{s}+O(1).
    \end{equation*}
    Furthermore, since $\lim_{s\to 0} |\Gamma(s)| = +\infty$, we have
    \begin{equation*}
        \lim_{s\to 0} Z_{2\balpha}(s)
        = \lim_{s\to 0} -\frac{\pi^{s/2}}{\Gamma(s/2)} \frac{\delta_{0,\balpha}}{s/2}
        = \lim_{s\to 0} -\delta_{0,\balpha}\frac{\pi^{s/2}}{\Gamma(s/2+1)}
        = -\delta_{0, \balpha}.
    \end{equation*}
    Thus $s=0$ is not a pole of $Z_{2\balpha}(s)$.
    Finally, we conclude that $Z_{2\balpha}(s)$ has a meromorphic
    continuation to the whole complex plane $\bbC$ with a simple pole at
    $s = d + 2\onenorm{\balpha}$.
\end{proof}

\begin{proof}[Proof of \cref{thm:scaled_zeta_function}]
    The proof is similar to that of \cref{thm:zeta_function} and we omit
    most details for brevity. The key step is to compute a different
    Fourier transform as
    \begin{align*}
        \fto{\bx^{\balpha} \re^{-\frac{\pi}{t} \bx^\top B \bx}} (\by) 
        = \left(\frac{-1}{4\pi}\right)^{\frac{\onenorm{\balpha}}{2}} t^{\frac{d + \onenorm{\balpha}}{2}} |B|^{-1/2} \re^{-\pi t \by^\top B^{-1} \by} H_{\balpha}(\sqrt{\pi t} \,\by; B^{-1}),
    \end{align*}
    where $H_{\balpha}(\cdot; B^{-1})$ is the multivariate Hermite
    polynomial associated with matrix $B^{-1}$. Note that here we have
    used the assumption that $\onenorm{\balpha}$ is even to ensure the
    Fourier-transform prefactor is real-valued. 
\end{proof}

Next we give the tail estimate of series in the generalized zeta
function. We highlight that these estimates are sufficient for our
purpose and they could be improved to be sharper.

\begin{lemma} \label{lem:incomp_gamma_estimate}
    Let $a\in\bbR$ and $x>0$. The upper incomplete Gamma function
    \begin{equation*}
        \Gamma(a,x):=\int_x^\infty t^{a-1}\re^{-t}\,\rd t
    \end{equation*}
    satisfies
    \begin{equation*}
        0 < \Gamma(a,x) \leq x^a\re^{-x}, 
        \quad \text{for } x\ge \max\{a,1\}.
    \end{equation*}
\end{lemma}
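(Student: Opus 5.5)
Positivity is immediate: for real $a$ and $x>0$ the integrand $t^{a-1}\re^{-t}$ is strictly positive on $(x,\infty)$, and the integral converges because of the exponential decay. So the whole task is the upper bound $\Gamma(a,x)\le x^a\re^{-x}$ for $x\ge\max\{a,1\}$.

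For the upper bound I will change variables $t=x+u$ with $u\ge 0$:
\begin{equation*}
    \Gamma(a,x)=\re^{-x}x^{a-1}\int_0^\infty \Bigl(1+\frac{u}{x}\Bigr)^{a-1}\re^{-u}\,\rd u .
\end{equation*}
It then suffices to show that the remaining integral is at most $x$. I will split into two cases according to the sign of $a-1$.

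If $a\le 1$, then $(1+u/x)^{a-1}\le 1$, so the integral is at most $\int_0^\infty\re^{-u}\,\rd u=1$. Since $x\ge 1$, this gives $1\le x$.

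If $a>1$, I will use $1+u/x\le \re^{u/x}$ to bound the integrand by $\re^{-u(1-(a-1)/x)}$. The hypothesis $x\ge a$ gives $x-(a-1)\ge 1>0$, so the integral is at most
\begin{equation*}
    \frac{1}{1-(a-1)/x}=\frac{x}{x-a+1}\le x .
\end{equation*}
Combining the two cases with the prefactor $\re^{-x}x^{a-1}$ yields $\Gamma(a,x)\le x^a\re^{-x}$.

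The only delicate point is making sure that both hypotheses are actually used. The condition $x\ge 1$ covers the case $a\le 1$, and $x\ge a$ makes the exponential rate $1-(a-1)/x$ positive while also giving the factor $x/(x-a+1)\le x$ when $a>1$. Everything else is routine.
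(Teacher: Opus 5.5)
Your proof is correct and is essentially the paper's argument. The paper substitutes $t=x(1+u)$ instead of $t=x+u$, which is only a rescaling of $u$; it then splits into the cases $a\le 1$ and $a>1$ and uses $1+u\le \re^{u}$ exactly as you do.
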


\begin{proof}
    Since the integrand is positive for $t\ge x>0$, we have
    $\Gamma(a,x)>0$. Make the change of variables $t=x(1+u)$ to obtain
    \begin{equation*}
        \Gamma(a,x)
        = \int_x^\infty t^{a-1}\re^{-t}\,\rd t
        = x^a\re^{-x}\int_0^\infty (1+u)^{a-1}\re^{-ux}\,\rd u.
    \end{equation*}
    When $a \leq 1$, $|(1+u)^{a-1}| \leq 1$ for $u \geq 0$, thus we have
    \begin{equation*}
        \Gamma(a, x) \leq x^a \re^{-x} \int_0^{\infty} \re^{-ux} \,\rd u 
        = x^a \re^{-x} \cdot \frac{1}{x} \leq x^a \re^{-x}, 
        \qquad x \geq 1.
    \end{equation*}
    When $a > 1$, by $1 + u \leq e^u$ we have
    \begin{equation*}
        \Gamma(a, x) \leq x^a \re^{-x} \int_0^{\infty} \re^{-u(x-a+1)} \rd u 
        = x^a \re^{-x} \cdot \frac{1}{x - a + 1} \leq x^a \re^{-x}, 
        \qquad x \geq a.
    \end{equation*}
    Combining the discussions above we obtain the desired estimate.
\end{proof}

\begin{lemma} \label{lem:tail_estimate}
    Let $a, b \in \bbR$ and let $B\in\bbR^{d\times d}$ be symmetric
    positive definite matrix with minimum and maximum eigenvalues
    denoted by $\lmin$ and $\lmax$ respectively. Assume $N \in \bbN$
    satisfies 
    \begin{equation*}
        \pi \lmin N^2 \geq \max\left\{a, 1, \frac{2a+b+d}{2}\right\}.   
    \end{equation*}
    Then
    \begin{equation*}
        \sum_{\infnorm{\bk}=N+1}^{\infty} (\bk^\top B \bk)^{b/2}\, \Gamma(a,\pi\,\bk^\top B \bk)
        \le C_d\, C_{B,b}\,(\pi \lmin)^a\,N^{2a+b+d}\,\re^{-\pi \lmin N^2},
    \end{equation*}
    where
    \begin{equation*}
        C_d := d\,3^{d-1},
        \qquad
        C_{B,b}:=
        \begin{cases}
            \lmin^{b/2}, & b\le 0,\\
            (d\lmax)^{b/2}, & b>0.
        \end{cases}
    \end{equation*}
\end{lemma}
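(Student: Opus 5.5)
The plan is to reduce the lattice sum to a one-dimensional sum over $\ell_\infty$-shells, bound each term pointwise using \cref{lem:incomp_gamma_estimate}, and then sum the resulting tail with an integral comparison.

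\textbf{Step 1: two-sided bounds on the quadratic form.} For $\bk$ in the shell $S_n=\{\infnorm{\bk}=n\}$ with $n\ge N+1$, we have
\begin{equation*}
    \lmin n^2 \le \lmin |\bk|^2 \le \bk^\top B\bk \le \lmax |\bk|^2 \le d\lmax n^2 .
\end{equation*}
Set $x_{\bk}:=\pi\bk^\top B\bk$. Then $x_{\bk}\ge \pi\lmin n^2\ge \pi\lmin N^2\ge\max\{a,1\}$, so \cref{lem:incomp_gamma_estimate} gives
\begin{equation*}
    \Gamma(a,x_{\bk})\le x_{\bk}^a\re^{-x_{\bk}}.
\end{equation*}
Hence each summand is bounded by $\pi^{-b/2}x_{\bk}^{a+b/2}\re^{-x_{\bk}}$.

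\textbf{Step 2: monotonicity of the pointwise bound.} The function $\psi(x)=x^{c}\re^{-x}$ with $c=a+b/2$ is decreasing for $x\ge c$. The hypothesis $\pi\lmin N^2\ge (2a+b+d)/2=c+d/2\ge c$ puts every $x_{\bk}$ in this monotone range. Therefore we may replace $x_{\bk}$ by its lower bound $\pi\lmin n^2$.

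This replacement is clean only when handled as follows. Keep the $(\bk^\top B\bk)^{b/2}$ factor separate, bounding it by $\lmin^{b/2}n^b$ if $b\le0$ and by $(d\lmax)^{b/2}n^b$ if $b>0$; this is exactly where $C_{B,b}$ comes from. Then bound $\Gamma(a,x_{\bk})\le \Gamma(a,\pi\lmin n^2)$, which holds because $\Gamma(a,\cdot)$ is decreasing. Finally apply the lemma to get $\Gamma(a,\pi\lmin n^2)\le(\pi\lmin n^2)^a\re^{-\pi\lmin n^2}$. This route avoids monotonicity issues altogether, since $\Gamma(a,\cdot)$ is always decreasing.

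\textbf{Step 3: shell summation.} With $\#S_n\le 2d\,3^{d-1}n^{d-1}$ from \cref{lem:linfty_shell_count}, we get
\begin{equation*}
    \text{LHS}\le 2C_d C_{B,b}(\pi\lmin)^a\sum_{n\ge N+1} n^{2a+b+d-1}\re^{-\pi\lmin n^2}.
\end{equation*}

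\textbf{Step 4: the tail sum (main obstacle).} The remaining task is to bound this tail by $\tfrac12 N^{2a+b+d}\re^{-\pi\lmin N^2}$, and getting the exact constant is where the care is needed. Let $m=2a+b+d$ and $\lambda=\pi\lmin$. The hypothesis $\lambda N^2\ge m/2$ means the function $F(t)=t^{m-1}\re^{-\lambda t^2}$ is decreasing on $[N,\infty)$, since its log-derivative $(m-1)/t-2\lambda t$ is negative there. So
\begin{equation*}
    \sum_{n\ge N+1}F(n)\le\int_N^\infty t^{m-1}\re^{-\lambda t^2}\,\rd t .
\end{equation*}
Substituting $u=\lambda t^2$ turns this into $\tfrac12\lambda^{-m/2}\Gamma(m/2,\lambda N^2)$. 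Because $\lambda N^2\ge\max\{m/2,1\}$, \cref{lem:incomp_gamma_estimate} applies again and gives the bound $\tfrac12 N^{m}\re^{-\lambda N^2}$. The factor $\tfrac12$ cancels the $2$ from the shell count, which yields exactly $C_d C_{B,b}(\pi\lmin)^aN^{2a+b+d}\re^{-\pi\lmin N^2}$.

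If $m\le1$, then $F$ is decreasing trivially. If $m\le 0$, the hypothesis $\ge1$ covers the use of the lemma.
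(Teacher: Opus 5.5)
Your proposal is correct and follows the paper's proof essentially step for step. Both arguments use the Rayleigh bounds on each $\ell_\infty$-shell to produce $C_{B,b}$, then the monotonicity of $\Gamma(a,\cdot)$ followed by \cref{lem:incomp_gamma_estimate}, then the shell count from \cref{lem:linfty_shell_count}, and finish with an integral comparison plus a second use of \cref{lem:incomp_gamma_estimate}, whose factor $\tfrac12$ cancels the $2$ in the shell count.

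Incidentally, the route you set aside at the start of Step~2 is also valid. There you bound each summand by $\pi^{-b/2}x_{\bk}^{a+b/2}\re^{-x_{\bk}}$ and use that this is decreasing in $x_{\bk}\ge a+b/2$, which the hypothesis guarantees. That route would even give the sharper constant $\lmin^{b/2}$ for every sign of $b$.
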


\begin{proof}
    For $n\in\bbN_+$, let $S_n:=\{\bk\in\bbZ^d:\infnorm{\bk}=n\}$. If
    $\bk\in S_n$, then $n^2\le |\bk|^2\le dn^2$, hence by the Rayleigh
    quotient bounds
    \begin{equation*}
        \lmin n^2 \leq \lmin |\bk|^2 \leq \bk^\top B \bk \leq \lmax |\bk|^2 \leq d \lmax n^2.
    \end{equation*}
    Since $\Gamma(a, x)$ is decreased with $x$ increasing when $x> 0$,
    we have 
    \begin{equation*}
        \Gamma(a, \pi \bk^\top B \bk) \leq \Gamma(a, \pi \lmin n^2).        
    \end{equation*}
    Moreover, $x\mapsto x^{b/2}$ is decreasing for $b\le 0$ and
    increasing for $b>0$, so for $\bk\in S_n$,
    \begin{equation*}
        (\bk^\top B\bk)^{b/2}\le C_{B,b}\,n^{b}.
    \end{equation*}
    Note that \cref{lem:linfty_shell_count} gives $\#S_n \leq
    2d\,3^{d-1}n^{d-1}$, so
    \begin{align*}
        \sum_{\infnorm{\bk}=N+1}^{\infty} (\bk^\top B \bk)^{b/2}\, \Gamma(a,\pi\,\bk^\top B \bk)
        &= \sum_{n=N+1}^{\infty}\ \sum_{\bk\in S_n} (\bk^\top B \bk)^{b/2}\, \Gamma(a,\pi\,\bk^\top B \bk) \\
        &\le 2d\,3^{d-1} C_{B,b}\sum_{n=N+1}^{\infty} n^{b+d-1}\,\Gamma(a,\pi \lmin n^2).
    \end{align*}
    By \cref{lem:incomp_gamma_estimate} and the assumption $\pi \lmin
    n^2\ge \max\{a,1\}$ for all $n\ge N$, we have
    \begin{equation*}
        \Gamma(a,\pi \lmin n^2)\le (\pi \lmin n^2)^a\,\re^{-\pi \lmin n^2}.
    \end{equation*}
    Hence, with $c:=2a+b+d-1$,
    \begin{equation*}
        \sum_{n=N+1}^{\infty} n^{b+d-1}\,\Gamma(a,\pi \lmin n^2)
        \le (\pi \lmin)^a\sum_{n=N+1}^{\infty} \re^{-\pi \lmin n^2}\,n^{c}.
    \end{equation*}

    Let $f(x):=\re^{-\pi \lmin x^2}x^c$. Then $f'(x)=\re^{-\pi \lmin
    x^2}x^{c-1}(c-2\pi \lmin x^2)$. Thus, we have $f'(x)\leq 0$ for $x >
    \sqrt{\frac{|c|}{2\pi \lmin}}$. The condition $\pi \lmin N^2\ge
    (c+1)/2$ implies $f$ is decreasing on $[N,\infty)$, so
    \begin{equation*}
        \sum_{n=N+1}^{\infty} \re^{-\pi \lmin n^2} n^{c} = \sum_{n=N+1}^{\infty} f(n) 
        \leq \sum_{n=N+1} \int_{n-1}^{n} f(x) \rd x = \int_N^{\infty} \re^{-\pi \lmin x^2}x^c \,\rd x.
    \end{equation*}
    Further, by changing variables $t=\pi \lmin x^2$,
    \begin{align*}
        \int_N^{\infty} \re^{-\pi \lmin x^2}x^c \,\rd x 
        &= \frac{1}{2} \left(\frac{1}{\pi \lmin}\right)^{\frac{c+1}{2}} \Gamma\!\left(\frac{c+1}{2}, \pi \lmin N^2\right) \\
        & \leq \frac{1}{2} \left(\frac{1}{\pi \lmin}\right)^{\frac{c+1}{2}} (\pi \lmin N^2)^{\frac{c+1}{2}} \re^{-\pi \lmin N^2} 
        = \frac{1}{2} N^{c+1} \re^{-\pi \lmin N^2},
    \end{align*}
    where we have used \cref{lem:incomp_gamma_estimate} with the
    assumption $\pi \lmin N^2 \geq \max\{\frac{c+1}{2}, 1\}$. Therefore,
    we conclude that
    \begin{equation*}
        \sum_{\infnorm{\bk}=N+1}^{\infty} (\bk^\top B \bk)^{b/2} \, \Gamma(a, \pi \, \bk^\top B \bk) 
        \leq d\, 3^{d-1} \, C_{B, b} \cdot (\pi \lmin)^a \cdot N^{2a+b+d} \re^{-\pi \lmin N^2},
    \end{equation*}
    as claimed.
\end{proof}

Finally, we present the practical evaluation formula for the generalized
zeta function associated with symmetric positive definite matrix $B$ and
its tail estimate. Write the Hermite expansion
$H_{\balpha}(\bx;B^{-1})=\sum_{0\le \onenorm{\bbeta}\le \onenorm{\balpha}}c_{\bbeta}\bx^{\bbeta}$. 
Then we have
\begin{multline*}
    \frac{\Gamma(s/2)}{\pi^{s/2}} Z_{\balpha}^B(s) 
    = |B|^{-\frac{1}{2}} \left(-\frac{1}{4\pi}\right)^{\frac{\onenorm{\balpha}}{2}} \frac{2H_{\balpha}(0; B^{-1})}{s - d - \onenorm{\balpha}} 
    - \frac{2\delta_{0, \balpha}}{s}
    + \frac{1}{\pi^{s/2}} \sum_{\infnorm{\bn}=1}^{\infty} \frac{\bn^{\balpha}}{(\bn^\top B \bn)^{s/2}} \Gamma\!\left(\frac{s}{2}, \pi \, \bn^\top B \bn\right) \\
    + |B|^{-1/2} \left(-\frac{1}{4\pi}\right)^{\frac{\onenorm{\balpha}}{2}} \pi^{\frac{s-d-\onenorm{\balpha}}{2}} \sum_{0 \leq \onenorm{\bbeta} \leq \onenorm{\balpha}} c_{\bbeta} \\
    \sum_{\infnorm{\bk}=1}^{\infty} \frac{\bk^{\bbeta}}{(\bk^\top B^{-1} \bk)^{\frac{\onenorm{\bbeta} + d + \onenorm{\balpha} - s}{2}}} \Gamma\!\left(\frac{\onenorm{\bbeta} + d + \onenorm{\balpha} - s}{2}, \pi \, \bk^\top B^{-1} \bk\right).
\end{multline*}

Denote the minimum and maximum eigenvalues of $B$ as $\lmin$ and
$\lmax$, respectively. Note that
\begin{equation*}
    \lmin \bn^T \bn \leq \bn^\top B \bn, \quad \frac{1}{\lmax} \bk^\top \bk \leq \bk^\top B^{-1} \bk.
\end{equation*}
Then for $\bn \neq \bzero$ we have
\begin{align*}
    \left| \frac{\bn^{\balpha}}{(\bn^\top B \bn)^{s/2}} \Gamma\!\left(\frac{s}{2}, \pi \, \bn^\top B \bn\right) \right| 
    &\leq \frac{|\bn|^{\onenorm{\balpha}}}{(\bn^\top B \bn)^{\frac{\onenorm{\balpha}}{2}}} 
        \frac{1}{(\bn^\top B \bn)^{\frac{\rep(s)-\onenorm{\balpha}}{2}}} 
        \Gamma\!\left(\frac{\rep(s)}{2}, \pi \, \bn^\top B \bn\right) \\
    &\leq \left(\frac{1}{\lmin}\right)^{\frac{\onenorm{\balpha}}{2}} 
        (\bn^\top B \bn)^{\frac{\onenorm{\balpha} - \rep(s)}{2}} 
        \, \Gamma\!\left(\frac{\rep(s)}{2}, \pi \, \bn^\top B \bn\right).
\end{align*}
Similarly, for the second series, let $s_1 :=
\onenorm{\beta}+d+\onenorm{\balpha}-s$, then we have
\begin{align*}
    \left|
    \frac{\bk^{\bbeta}}{(\bk^\top B^{-1} \bk)^{s_1 / 2}}
    \Gamma\!\left(\frac{s_1}{2}, \pi \, \bk^\top B^{-1} \bk\right)
    \right|
    \leq (\lmax)^{\frac{\onenorm{\bbeta}}{2}}
    (\bk^\top B^{-1} \bk)^{\frac{\onenorm{\bbeta}-\rep(s_1)}{2}}
    \,\Gamma\!\left(\frac{\rep(s_1)}{2}, \pi \, \bk^\top B^{-1} \bk\right).
\end{align*}
Both estimates are now in the form required by \cref{lem:tail_estimate},
so we can estimate the truncation error for evaluating
$Z_{\balpha}^B(s)$ accordingly.

\bibliography{SinCoTrap.bib}

\end{document}